\setlist{noitemsep, topsep=0pt} 
\definecolor{LinkBlue}{rgb}{.15, .25, .85} 
\renewcommand*{\NAT@spacechar}{~}
\providecommand{\algorithmname}{Algorithm}
\newcommand*{\addFileDependency}[1]{
	\typeout{(#1)}
	\@addtofilelist{#1}
	\IfFileExists{#1}{}{\typeout{No file #1.}}
}
\newcommand*{\myexternaldocument}[1]{%
	\externaldocument{#1}%
	\addFileDependency{#1.tex}%
	\addFileDependency{#1.aux}%
}
\theoremstyle{plain}
\newtheorem{theorem}{Theorem}[section]
\newtheorem{lemma}[theorem]{Lemma}
\newtheorem{corollary}[theorem]{Corollary}
\theoremstyle{remark}
\newtheorem{definition}[theorem]{Definition}
\theoremstyle{definition}
\newcommand{\eq}[1]{\begin{align*}#1\end{align*}} 
\newcommand{\ec}[1]{\begin{gather*}#1\end{gather*}} 
\newcommand{\removeParBefore}{\ifvmode\vspace*{-\baselineskip}\setlength{\parskip}{0ex}\fi}
\newcommand{\removeParAfter}{\@ifnextchar\par\@gobble\relax}
\newcommand{\R}{\ensuremath{\mathbb{R}}}
\newcommand{\E}{\operatorname{\mathbb{E}}}
\renewcommand{\P}{\operatorname{\mathbb{P}}}
\newcommand{\Bern}{\operatorname{Bern}}
\newcommand{\N}{\operatorname{N}}
\newcommand{\Unif}{\operatorname{Unif}}
\newcommand{\scrF}{\mathscr{F}}
\newcommand{\scrG}{\mathscr{G}}
\newcommand{\calC}{\mathcal{C}}
\newcommand{\calX}{\mathcal{X}}
\newcommand{\calZ}{\mathcal{Z}}
\newcommand{\cd}{\cdot}
\newcommand{\eps}{\epsilon}
\newcommand{\g}{\, | \,}
\newcommand{\TV}{\mathrm{TV}}
\newcommand{\diff}{\mathrm{d} } 
\newcommand{\qaq}{\quad \text{and} \quad}
\newcommand{\sas}{\ \text{and} \ }
\newcommand{\zo}{\{0,1\}}
\newcommand{\symdiff}{ \operatorname{{\scalebox{.9}{$\bigtriangleup$}}}}
\newcommand{\calXp}{\calX \times \calX}
\newcommand{\scrFp}{\scrF \otimes \scrF}
\newcommand{\bp}{\bar P}
\newcommand{\bq}{\bar Q}
\newcommand{\bb}{\bar B}
\newcommand{\sm}{\setminus}
\newcommand{\sx}{\{ x \}}
\newcommand{\sy}{\{ y \}}
\newcommand{\xy}{(x,y)}
\newcommand{\XY}{(X,Y)}
\newcommand{\xyp}{(x',y')}
\newcommand{\bxy}{(b_x,b_y)}
\newcommand{\dxyp}{(\mathrm{d} x', \mathrm{d} y')}
\newcommand{\axy}{A_x \times A_y}
\newcommand{\Gmax}{\Gamma^\mathrm{max}}
\newcommand{\axm}{(A_x\! \setminus\! \sx)}
\newcommand{\aym}{(A_y\! \setminus\! \sy)}
\newcommand{\axym}{\axm \times \aym}
\newcommand{\axs}{(A_x \cap  \sx)}
\newcommand{\h}{\sfrac{1}{2}}
\newcommand{\sxy}{S_{xy}}
\begin{document}

\begin{frontmatter}
\title{Metropolis--Hastings transition kernel couplings}
\runtitle{Metropolis--Hastings transition kernel couplings}

\begin{aug}
	\author[a]{\fnms{John} \snm{O'Leary}\ead[label=e1]{john.oleary@tudor.com}} and
	\author[b]{\fnms{Guanyang} \snm{Wang}\ead[label=e2]{guanyang.wang@rutgers.edu}}

	\address[a]{Tudor Investment Corporation, \printead{e1}}
	\address[b]{Department of Statistics, Rutgers University, \printead{e2}}
\end{aug}

\begin{abstract}
Couplings play a central role in the analysis of Markov chain convergence and in the construction of
novel Markov chain Monte Carlo estimators, diagnostics, and variance reduction techniques. The set
of possible couplings is often intractable, frustrating the search for tight bounds and efficient
estimators. To address this challenge for algorithms in the Metropolis--Hastings (MH) family, we
establish a simple characterization of the set of MH transition kernel couplings. We then extend
this result to describe the set of maximal couplings of the MH kernel, resolving an open question of
\citet{OLeary2020}. Our results represent an advance in understanding the MH transition kernel and a
step forward for coupling this popular class of algorithms.
\end{abstract}

\begin{keyword}[class=MSC]
	\kwd[Primary ]{60J05}
	\kwd{60J22}
	\kwd[; secondary ]{65C05}
\end{keyword}

\begin{keyword}
	\kwd{Metropolis--Hastings algorithm}
	\kwd{couplings}
	\kwd{Markov chain Monte Carlo}
\end{keyword}

\end{frontmatter}

\section{Introduction}
\label{sec:intro}

Couplings have played an important role in the analysis of Markov chain convergence since the early
days of the field~\citep{doeblin1938expose, harris1955chains}. Beyond their role as a proof
technique, couplings have also been used as a basis for sampling~\citep{propp:wilson:1996,
	fill1997interruptible, neal1999circularly,flegal2012exact}, convergence diagnosis
\citep{johnson1996studying, johnson1998coupling, biswas2019estimating}, variance reduction
\citep{neal2001improving,Goodman2009,piponi2020hamiltonian}, and unbiased estimation
\citep{glynn2014exact, Jacob2020, heng2019,pmlr-v89-middleton19a, Middleton2020, heng2021aunbiased,
	heng2021bunbiased}. Couplings that deliver smaller meeting times are associated with better results,
and the design of such couplings is a central problem for theoretical and computational
statisticians who use these methods.

Given a pair of Markov chains, the distribution of meeting times that can occur under any coupling
is constrained by the total variation distance between the marginal distributions of the chains at
each iteration~\citep{aldous1983random, lindvall2002lectures}. Couplings that achieve the fastest
meeting times allowed by this bound are said to be efficient or maximal and exist in some generality
\citep{griffeath1975maximal, pitman1976coupling, goldstein1979maximal}. These couplings are rarely
Markovian, whereas this is a practical requirement in many of the applications above. Efficient
Markovian couplings are known in a few cases, but they remain elusive for discrete-time MCMC
algorithms~\citep{burdzy2000efficient, connor2008optimal, kuwada2009characterization,
	hsu2013maximal, kendall2015coupling, bottcher2017markovian, banerjee2017rigidity}. For techniques
such as the Metropolis--Hastings (MH) algorithm~\citep{Metropolis1953, hastings:1970}, it is
difficult even to describe or parameterize the set of possible couplings, much less to identify an
optimum within this set. Despite its practical importance, coupling design is often an unsystematic
task that relies on qualitative insight, experience, and luck.

In this paper, we aim to address this challenge by deriving a simple characterization of the set of
possible MH kernel couplings. We show that every such coupling can be represented as a coupled
proposal step followed by a coupled acceptance step with a short list of properties. We also show
the converse: that any coupled proposal and acceptance steps with these properties combine to yield
a coupling of MH transition kernels. Despite its simplicity, this result requires a clear
understanding of the mechanics of MH coupling and has proved elusive since at least
\citet{johnson1998coupling}. Yet it is of some theoretical and practical value, as described below.

On the theory side, our result addresses an important open question on the structure of MH kernel
couplings, which is whether all such couplings have a `two-step' representation. So far, two
approaches to constructing MH kernel couplings have appeared in the literature: in the first
(one-step) method, a coupling is defined directly in terms of the probability density or mass
function of the marginal transition kernel~\citep{rosenthal1996analysis, rosenthal2002quantitative,
	qin2018wasserstein, OLeary2020}. In the other (two-step) method, one first defines a coupling at the
MH proposal step followed by a second coupling at the acceptance step~\citep{johnson1998coupling,
	bou2018coupling, Jacob2020}. One-step couplings seem more general, but they can be difficult to
analyze. In contrast, two-step couplings have proven amenable to methods like those of
\citet{bou2018coupling}, which yield explicit bounds on the contraction rate between chains. Our
results show that all MH kernel couplings have a two-step representation, resolving the apparent
tension between these two approaches.

On the practical side, our results make it easier to identify high-performance couplings of the MH
transition kernel. First, they reframe the problem of transition kernel coupling design into the
simpler tasks of selecting proposal and acceptance step couplings. Our results show that any
coupling can be expressed in a two-step form, so there is no need to consider complex one-step
couplings such as those in~\citet{o2021couplings} in order to achieve rapid convergence between chains. This
is an additional benefit since two-step couplings are often easier to implement than one-step
couplings. Finally, proposal and acceptance step couplings often have natural parameterizations, and
in this case, our results support the use of numerical methods such as gradient descent to optimize
the performance of MH kernel couplings.

With the above results in hand, we turn to maximal couplings of the MH transition kernel, which are
sometimes called greedy couplings~\citep{aldous1995reversible, hayes2005general}, one-step maximal
couplings~\citep{reutter1995general, hayes2003non, kartashov2013maximal}, or step-by-step maximal
couplings~\citep{pillai2019mixing} of the associated Markov chains. While these couplings do not
typically achieve the fastest meeting times allowed by the coupling inequality, they often perform
well and serve as a valuable reference point for further analysis. Building on the results described
above, we characterize maximal couplings in terms of the properties of the associated proposal and
acceptance couplings. We also resolve an open question of~\citet{OLeary2020} on the structure of
maximal transition kernel couplings and their relationship with maximal couplings of the proposal
distributions.

We note that our results hold for simple MH algorithms as well as refinements such as Hamiltonian
Monte Carlo~\citep{Duane:1987, Neal1993, neal2011mcmc}, the Metropolis-adjusted Langevin algorithm
\citep{roberts1996exponential}, and particle MCMC~\citep{andrieu:doucet:holenstein:2010}. They also
hold on both continuous and discrete state spaces, and for `lazy' implementations where the proposal
distribution is not absolutely continuous with respect to the base measure. Finally, they hold for
methods like Barker's algorithm~\citep{Barker1965} where the acceptance rate function differs from
the usual MH form. Thus our characterizations of kernel couplings and maximal kernel couplings apply
to most Markov chains which involve a sequence of proposal and acceptance steps.

The rest of this paper is organized as follows. In Section~\ref{sec:kercoup}, we describe our
notation and setting, state our main result on transition kernel couplings, and set out a series of
definitions and lemmas to prove it. In Section~\ref{sec:maximal}, we highlight a few important
properties of maximal couplings, consider the relationship between maximal proposal and transition
kernel couplings, and prove our main result on the structure of maximal couplings of the MH
transition kernel. In Section~\ref{sec:appn}, we analyze the kernel couplings of several widely-used
MH algorithms such as the random-walk MH algorithm and the Metropolis-adjusted Langevin algorithm.
In Section~\ref{sec:discussion} we discuss our results and outline future directions. Finally, in
the appendix, we illustrate our definitions and results with a series of simple examples.


\section{Metropolis--Hastings kernel couplings}
\label{sec:kercoup}

In this section, we show that every MH-like transition kernel coupling can be represented as a
proposal coupling followed by an acceptance indicator coupling (Theorem \ref{thm:repr}). After
establishing our setting and notation, we build up to this proof through several auxiliary results
(Lemmas~\ref{lem:generation}-\ref{lem:rndproperties}). The key step is Lemma \ref{lem:camdpc}, which
shows how to map from an arbitrary MH-like transition kernel coupling to a proposal coupling
together with a structure we call a `coupled acceptance mechanism.' The representation of transition
kernel couplings in terms of coupled proposal and acceptance steps follows from this result.
Finally, Corollary \ref{cor:repr_non_atomic} in Section \ref{sec:cor} gives a simplified form of
Theorem \ref{thm:repr} under mild assumptions on the proposal kernel.

\subsection{Notation and setting}
\label{sec:notation}

Let $(\calZ, \scrG)$ be a measurable space. We say $\Theta: \calZ \times \scrG \to [0, 1]$ is a
Markov kernel if $\Theta(z, \cdot)$ is a probability measure on $(\calZ, \scrG)$ for each $z \in
\calZ$ and if $\Theta(\cdot, A) : \calZ \to [0,1]$ is a measurable function for each $A \in \scrG$.
We say that $\Theta$ is a sub-probability kernel if it satisfies the same conditions but with
$\Theta(z,\cdot)$ a sub-probability measure for each $z \in \calZ$ rather than a probability
measure. Finally, let $2^S$ denote the power set of a set $S$, let $\Bern(\alpha)$ denote the
Bernoulli distribution on $\{0,1\}$ with $\P(\Bern(\alpha)=1) = \alpha$ for $\alpha \in [0,1]$, and
let $a \wedge b := \min(a,b)$ for any $a,b \in \R$.

Suppose that $(\calX, \scrF)$ is a Polish space with base measure $\lambda$. We take $\calX$ to be
our state space. Fix a Markov kernel ${Q: \calX \times \scrF \to [0,1]}$, which we call the proposal
kernel, and a function $a: \calX \times \calX \to [0,1]$, which we call the acceptance rate
function. To be an acceptance rate function, we require that $a(x,\cd)$ is $Q(x,\cd)$-measurable and
that $a(x,x)=1$ for all $x \in \calX$. The second condition simplifies many proofs and involves no
loss of generality, as we discuss below. Given a current state $x \in \calX$ and a measurable set $A
\in \scrF$, we think of $Q(x, A)$ as the probability of proposing a move from $x$ to some $x' \in
A$, and we think of $a(x,x')$ as the probability of accepting such a proposal. We call the
probability measure ${ B(x,x') := \Bern(a(x,x')) }$ the acceptance indicator distribution associated
with the acceptance rate function $a$, given a proposed move from $x$ to $x'$.

In this paper, we consider transition kernels that generalize the MH algorithm according to the
following definition:

\smallskip

\begin{definition}(MH-like transition kernel)\label{def:mh-like}
We say that a Markov kernel $P$ is \textit{generated by $Q$ and $a$} if for all $x \in \calX$, $x'
\sim Q(x,\cdot)$ and $b_x \sim B(x,x') = \Bern(a(x,x'))$ imply $X := b_x x' + (1 - b_x) x \sim
P(x,\cdot)$. We say $P$ is \textit{generated by $Q$ and $B$} if $P$ is generated by $Q$ and $a$, and
if $B$ is the acceptance indicator distribution associated with $a$. Finally, we say that a Markov
transition kernel $P$ is \textit{MH-like} if there exists a Markov kernel $Q: \calX \times \scrF \to
[0,1]$ and an acceptance rate function $a: \calX \times \calX \to [0,1]$ such that $P$ is generated
by $Q$ and $a$.
\end{definition}

Most MCMC algorithms that involve alternating proposal and acceptance steps are MH-like. The MH case
arises when we fix a target distribution $\pi \ll \lambda$, assume $Q(x, \cdot) \ll \lambda$ for all
$x \in \calX$, and set ${ a(x,x') := 1 \wedge ( \pi(x') q(x',x) / (\pi(x) q(x,x') ) ) }$, where
$\pi( \cd ) := \diff \pi / \diff \lambda$ and $q(x, \cd) := \diff Q(x,\cd) / \diff \lambda$. Our
analysis also holds for alternative forms of $a(x,x')$ and for proposal kernels that are not
absolutely continuous with respect to $\lambda$. The requirement that $a(x,x) = 1$ for all $x \in
\calX$ involves no loss of generality since any transition kernel $P$ that can be expressed in terms
of some $\tilde a$ without this property can also be expressed in terms of another acceptance
function $a$ that has it. Thus our analysis applies to continuous and discrete state spaces, to lazy
algorithms in which $Q(x, \sx) > 0$, and to the transition kernel of Barker's algorithm
\citep{Barker1965} where ${a(x,x') := \pi(x') q(x',x) / (\pi(x) q(x,x') + \pi(x') q(x',x))}$ for
each $x\neq x'$. We refer the reader to~\citet{andrieu2020general} for a general framework for
thinking about MH-like transition kernels.

Next, let $\mu$ and $\nu$ be any finite measures on $(\calX, \scrF)$. A measure $\gamma$ on
$(\calXp, \scrFp)$ is called a coupling of $\mu$ and $\nu$ if $\gamma(A \times \calX) = \mu(A)$ and
$\gamma(\calX \times A) = \nu(A)$ for all $A \in \scrF$. We write $\Gamma(\mu, \nu)$ for the set of
couplings of $\mu$ and $\nu$, sometimes called the Fr\'echet class of these measures
\citep{kendall2017lectures}. When $\mu$ and $\nu$ are probability measures, the coupling inequality
states that ${\P(X = Y) \leq 1 - \lVert \mu - \nu \rVert_\TV}$ for $(X,Y) \sim \gamma \in
\Gamma(\mu, \nu)$. Here $\lVert \mu - \nu \rVert_\TV = \sup_{A \in \scrF} | \mu(A) - \nu(A) |$ is
the total variation distance. See e.g. \citet[chap. 1.2]{lindvall2002lectures} or
\citet[chap.~4]{Levin2017} for a further discussion of this important inequality. A coupling
$\gamma$ that achieves the coupling inequality bound is said to be maximal, and we write $\Gmax(\mu,
\nu)$ for the set of such couplings. We note that there are important similarities between maximal
couplings and optimal transport couplings, whose goal is to minimize the expected distance between
draws according to some metric. The need for exact meeting depends on the context. For most of the
applications described in Section~\ref{sec:intro}, the advantage of maximal couplings is that the
pairs of chains can be kept together indefinitely once meeting occurs.

Suppose that $\Theta$ is a Markov kernel on $(\calZ,\scrG)$. Following
\citet[][chap.19]{douc2018markov}, we call $\bar \Theta : (\calZ \times \calZ) \times (\scrG \otimes
\scrG) \to [0,1]$ a kernel coupling based on $\Theta$ if $\bar \Theta$ is a Markov kernel on $(\calZ
\times \calZ, \scrG \otimes \scrG)$ and if $\bar \Theta((z_1,z_2),\cd) \in
\Gamma(\Theta(z_1,\cd),\Theta(z_2,\cd))$ for all $z_1,z_2 \in \calZ$. We write
$\Gamma(\Theta,\Theta)$ for the set of all such kernel couplings. Likewise, we say that $\bar
\Theta$ is a maximal kernel coupling if each $\bar \Theta((z_1,z_2), \cd)$ is a maximal coupling of
$\Theta(z_1,\cd)$ and $\Theta(z_2,\cd)$, and we write $\Gmax(\Theta, \Theta)$ for the set of these.
It is important to remember that a maximal kernel coupling $\bp \in \Gmax(P,P)$ does not usually
correspond to a maximal coupling of Markov chains in the sense of~\citet{aldous1983random}. The
former produces the highest probability of meeting at each individual step, while the latter
produces the smallest meeting times allowed by the coupling inequality.

Suppose that an MH-like kernel $P$ is generated by a proposal kernel $Q$ and an acceptance rate
function $a$. Then we call any $\bp \in \Gamma(P,P)$ a transition kernel coupling and any $\bq \in
\Gamma(Q,Q)$ a proposal kernel coupling. Let $\mathcal{B}$ be the power set of $\{0,1\}^2$. We call
$\bb: \calX^2 \times \calX^2 \times \mathcal{B} \to [0,1]$ an acceptance indicator coupling if
$\bb(\xy, \xyp, \cdot)$ is a measure on $\{0,1\}^2$ for all $x,y,x',y' \in \calX$ and if $\bb(\cdot,
\cdot, S): \calX^2 \times \calX^2 \to [0,1]$ is measurable for all $S \in \mathcal{B}$. Abusing
notation, we drop the third argument of $\bb$ and write $(b_x, b_y) \sim \bb(\xy, \xyp)$ whenever a
pair of random variables $(b_x, b_y) \in \{0,1\}^2$ follows the acceptance indicator coupling $\bb$.
Thus we say that a proposal coupling $\bq$ and an acceptance indicator coupling $\bb$ generate a
kernel coupling $\bp$ if for all $x,y \in \calX$, ${ \xyp \sim \bq(\xy,\cdot) }$ and ${ (b_x, b_y)
	\sim \bb(\xy, \xyp) }$ imply that $ (X,Y) := (b_x x' + (1 - b_x) x, b_y y' + (1 - b_y) y) \sim
\bp(\xy, \cdot). $ The first challenge before us is to show that every coupling $\bp \in
\Gamma(P,P)$ arises in this way. The second is to determine conditions on $\bq$ and $\bb$ such that
pairs $(X,Y)$ defined as above follow a kernel coupling $\bp \in \Gamma(P,P)$.

\subsection{Main result}

The literature on MH transition kernel couplings has so far considered individual examples of
couplings or at most small classes of them. \citet[section 3.4]{johnson1998coupling} showed that one
can start with a proposal coupling $\bq \in \Gamma(Q,Q)$ and construct an acceptance indicator
coupling $\bb$ such that $\bq$ and $\bb$ generate an MH transition kernel coupling $\bp \in
\Gamma(P,P)$. $\bq$ is taken to be the $\gamma$\nobreakdash-coupling of~\citet{lindvall2002lectures}
and $\bb$ is obtained by setting $b_x = 1(U \leq a(x,x'))$ and $b_y = 1(U \leq a(y,y'))$ using the
same ${U \sim \Unif}$. \citet{Jacob2020} maintains this $\bb$ while modifying $\bq$ to obtain faster
convergence between chains, employing a reflection strategy that yields dependence between $x'$ and
$y'$ even when $x' \neq y'$. Most recently,~\citet{OLeary2020} defined a conditional $\bb$ to obtain
a maximal transition kernel coupling $\bp$ from any maximal proposal kernel coupling $\bq$. Such
couplings $\bb$ accept proposals with $x' = y'$ at more than the MH rate in exchange for a lower
acceptance rate when $x' \neq y'$, such that each chain continues to marginally evolve according to
$P$.

In contrast with these previous studies, our aim in this paper is to fully characterize the set of
MH transition kernel couplings $\Gamma(P,P)$ in terms of proposal and acceptance indicator
couplings. In particular, we prove the following:

\smallskip

\begin{theorem}
\label{thm:repr}
Let $P$ be the MH-like transition kernel on $(\calX, \scrF)$ generated by a
proposal kernel $Q$ and an acceptance rate function $a$. A joint kernel $\bp \in \Gamma(P,P)$ if
and only if it is generated by $\bq \in \Gamma(Q,Q)$ and an acceptance indicator coupling $\bb$
such that for any $x, y \in \calX$ and $\bxy \sim \bb(\xy,\xyp)$, we have
\begin{enumerate}
	\item $\P(b_x=1 \g x,y,x') = a(x,x')$ for $Q(x,\cdot)$-almost all $x'$, and
	\item $\P(b_y=1 \g x,y,y')= a(y,y')$ for $Q(y,\cdot)$-almost all $y'$.
\end{enumerate}
\end{theorem}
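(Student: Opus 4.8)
The plan is to prove the two implications separately. The forward (``if'') direction is a short disintegration argument; the converse (``only if'') carries essentially all the content and is the substance of Lemma~\ref{lem:camdpc}, built on Lemmas~\ref{lem:generation}--\ref{lem:rndproperties}.

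\textbf{The ``if'' direction.} Fix $x,y \in \calX$, let $\bq \in \Gamma(Q,Q)$ and $\bb$ be as in the statement, and suppose they generate $\bp$. Draw $\xyp \sim \bq(\xy,\cdot)$, then $\bxy \sim \bb(\xy,\xyp)$, and set $X := b_x x' + (1-b_x)x$ and $Y := b_y y' + (1-b_y)y$, so $(X,Y) \sim \bp(\xy,\cdot)$. It suffices to show $X \sim P(x,\cdot)$ and, symmetrically, $Y \sim P(y,\cdot)$. Since $\bq \in \Gamma(Q,Q)$, marginally $x' \sim Q(x,\cdot)$; and property~(1) says exactly that the regular conditional law of $b_x$ given $x'$ (with $x,y$ held fixed and $y'$ integrated out) equals $\Bern(a(x,x'))$ for $Q(x,\cdot)$-almost every $x'$. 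Hence the pair $(x',b_x)$ has precisely the joint distribution appearing in Definition~\ref{def:mh-like}, which yields $X \sim P(x,\cdot)$. Measurability of $\bp(\cdot,A)$ is inherited from $\bq$ and $\bb$, so $\bp \in \Gamma(P,P)$. Note that property~(1) is stated conditionally on $x'$ only, not on $y'$; this is all that is needed here, and matches what the converse will produce.

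\textbf{The ``only if'' direction.} Given an arbitrary $\bp \in \Gamma(P,P)$, I would construct $\bq \in \Gamma(Q,Q)$ and an acceptance indicator coupling $\bb$ generating $\bp$ and satisfying (1)--(2). Fixing $\xy$, I would first split the destination space $\calXp$ into the four regions $(\calX \sm \sx)\times(\calX \sm \sy)$, $(\calX \sm \sx)\times \sy$, $\sx \times (\calX \sm \sy)$, and $\sx \times \sy$, according to whether each chain has moved off its current state. On the region where both chains have moved, a destination $\xyp$ with $x' \neq x$, $y' \neq y$ can only arise by proposing $\xyp$ and accepting both moves, so $\bp$ restricted there already pins down the ``accepted-moves'' part of the composition. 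Since $\bq$ must have first marginal $Q(x,\cdot)$ (resp.\ second marginal $Q(y,\cdot)$), and since $P(x,\cdot)$ restricted to $\calX \sm \sx$ equals $a(x,\cdot)\,Q(x,\cdot)$, a Radon--Nikodym argument (Lemma~\ref{lem:rndproperties}) would identify the conditional acceptance probability given a proposal as $a(x,x')$, forcing property~(1), and likewise property~(2). It then remains to allocate the leftover proposal mass -- the ``rejected'' mass $(1-a(x,\cdot))\,Q(x,\cdot)$ together with any ``lazy'' mass at $\sx$, and their $y$-analogues -- into a coupling, and to define $\bb$ on those proposals, so that composing $\bq$ with $\bb$ reproduces the three remaining pieces of $\bp$, in particular its atom at $\xy$. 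The induced joint law of $\bxy$ given $(\xy,\xyp)$ is the desired $\bb$, and I would finish by checking that the construction is jointly measurable in $x,y,x',y'$, so that $\bq$ is a genuine Markov kernel and $\bb$ a genuine acceptance indicator coupling. This is exactly the content of Lemma~\ref{lem:camdpc}, with Lemma~\ref{lem:generation} supplying the criterion under which the reconstructed $\bq$ and $\bb$ generate $\bp$.

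\textbf{The main obstacle.} The hard part is the bookkeeping around the atoms at $x$ and $y$. A destination whose first coordinate is $x$ may come either from a rejected proposal or -- in a lazy chain with $Q(x,\sx)>0$ -- from an accepted proposal of $x$ itself, so the factorization of $\bp$ into a proposal step and an acceptance step is genuinely non-unique. The construction must therefore make a selection that simultaneously (i) keeps both marginals of $\bq$ equal to $Q(x,\cdot)$ and $Q(y,\cdot)$, not merely matches the mass on moves; (ii) reproduces the off-diagonal pieces on $(\calX \sm \sx)\times \sy$ and $\sx \times (\calX \sm \sy)$ as well as the diagonal atom of $\bp$ at $\xy$; and (iii) is measurable jointly in all four arguments. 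Carrying this out cleanly -- as opposed to the forward direction, which is routine -- is where the auxiliary lemmas and the ``coupled acceptance mechanism'' of Lemma~\ref{lem:camdpc} do their work.
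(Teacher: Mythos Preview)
Your proposal is correct and follows essentially the same approach as the paper: the easy direction is the short marginalization argument you give, and the hard direction is deferred to the chain of auxiliary lemmas culminating in Lemma~\ref{lem:camdpc}. One small attribution slip: Lemma~\ref{lem:generation} shows that generation \emph{implies} the existence of a coupled acceptance mechanism (i.e., establishes necessity of the $\Phi$ conditions), whereas the step that actually verifies the reconstructed $\bq$ and $\bb$ generate $\bp$ is Lemma~\ref{lem:rnderivs}, with Lemma~\ref{lem:rndproperties} then checking properties (1)--(2); otherwise your outline matches the paper's logic.
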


This result shows that all MH-like transition kernel couplings $\bp$ are `natural' in the sense that
they arise from coupled proposals $\xyp \sim \bq(\xy, \cdot)$ that are accepted or rejected
according to coupled indicators ${ (b_x, b_y) \sim \bb(\xy, \xyp) }$. The $\bb$ conditions say that
from any current state pair $\xy$, the $x'$ and $y'$ proposals are marginally accepted at the rate
given by $a$, potentially with complicated joint behavior. Finally, Theorem \ref{thm:repr} confirms
that simple conditions on the joint proposal and acceptance steps suffice to yield a kernel coupling
$\bp$ in $\Gamma(P,P)$.

It is simple to show that the combination of a proposal coupling $\bq$ and an acceptance indicator
coupling $\bb$ with the properties above yields a kernel coupling $\bp \in \Gamma(P,P)$. The
converse requires a deeper understanding of the structure of transition kernel couplings. An example
of the difficulty is as follows. A transition from the state pair $\xy$ to a measurable rectangle $A_x
\times A_y$ with $x \not \in A_x$ and $y \not \in A_y$ can only occur if points $x' \in A_x$ and $y'
\in A_y$ are proposed and accepted. This makes it simple to work out the behavior of a hypothetical
$\bq$ and $\bb$ on these sets. However, transitions to ${A_x \times \sy}, {\sx \times A_y}, \sas \sx
\times \sy$ can arise from the partial or full rejection of proposed moves to measurable subsets of
$A_x \times \calX$, $\calX \times A_y$, and $\calX \times \calX$. It is challenging to relate all of
these transition probabilities to joint proposal and acceptance measures in a consistent way.

Nevertheless, we prove Theorem~\ref{thm:repr} by constructing a mapping from an arbitrary kernel
$\bp \in \Gamma(P,P)$ to a joint distribution $\bq$ on proposals $(x', y')$ and a coupling $\bb$ of
acceptance indicators $\bxy$, such these reproduce $\bp$ and agree with the original proposal kernel
$Q$ and acceptance rate function $a$. To do so, we define a collection of joint measures $\Phi$ on
$\xyp$ and $\bxy$ with certain decomposition properties. This $\Phi$ implies a proposal coupling
$\bq$, and a Radon--Nikodym argument then yields an acceptance indicator coupling $\bb$ with the desired
properties.

\subsection{Coupled acceptance mechanisms}

To begin, we define a relationship that can exist between an MH-like transition kernel coupling and
a proposal coupling:

\smallskip

\begin{definition}
\label{def:cam}
We say that a proposal coupling $\bq \in \Gamma(Q,Q)$ and a transition kernel coupling $\bp \in \Gamma(P,P)$ are
related by a \textit{coupled acceptance mechanism} $\Phi = (\Phi_{11}, \Phi_{10},\Phi_{01}, \Phi_{00})$
if each $\Phi_{ij}(\xy, \cdot)$ is a sub-probability on $(\calXp, \scrFp)$,
and if for all $x,y \in \calX$, $A_x, A_y \in \scrF$ we have
\eq{
	1. \ &\bq(\xy, \axy) =
	(\Phi_{11}+\Phi_{10}+\Phi_{01}+\Phi_{00}) ( \xy, \axy ) \hspace{114pt} \\
	2. \ & \bp(\xy, \axy) = \Phi_{11}(\xy, \axy) + \Phi_{10}(\xy, A_x \times \calX) 1(y \in A_y)  \\
	& \hspace{40pt} + \Phi_{01}(\xy, \calX \times A_y) 1(x \in A_x)
	+ \Phi_{00}(\xy, \calX \times \calX) 1(x \in A_x) 1(y \in A_y) \\
	3. \ & Q(x,\sx)\! =\! (\Phi_{11}\!+\!\Phi_{10})(\xy, \sx\! \times\! \calX) \sas Q(y, \sy)\! = \! (\Phi_{11}\!+\!\Phi_{01})(\xy, \calX\! \times\! \sy).
}
\end{definition}

We will see that the existence of a coupled acceptance mechanism $\Phi$ relating $\bq$ and $\bp$ is
equivalent to the existence of an acceptance indicator coupling $\bb$ such that $\bq$ and $\bb$
generate $\bp$. In Lemma~\ref{lem:generation} we show that if $(X,Y) \sim \bp(\xy, \cdot)$ is
generated by $\xyp \sim \bq(\xy, \cdot)$ and $(b_x, b_y) \sim \bb(\xy, \xyp)$, then a coupled
acceptance mechanism $\Phi$ relating $\bq$ and $\bp$ exists and can be defined by $\Phi_{ij}(\xy,
\cd) := \P(\xyp \in \cd, b_x = i, b_y = j)$ for ${i, j \in \{0,1\}}$. Conversely, in
Lemma~\ref{lem:rnderivs}, we show that if a coupled acceptance mechanism $\Phi$ relates $\bq$ and
$\bp$, then there exists an acceptance indicator coupling $\bb$ such that $\bq$ and $\bb$ generate
$\bp$.

Thus Definition~\ref{def:cam} captures a set of relationships between a proposal coupling $\bq$ and
a transition kernel coupling $\bp$ required for the former to potentially generate the latter. We
see that $\Phi$ divides the probability in $\bq(\xy,\cd)$ according to scenarios in which one, both,
or neither proposal is accepted, as illustrated in Figure~\ref{fig:cam}. Condition 1 requires
the total probability over these scenarios to sum to $\bq$ for all measurable subsets of $\calXp$.
Condition 2 says that the resulting distribution over transitions must also agree with $\bp$. Finally,
Condition 3 addresses proposals with $x' = x$ or $y' = y$ and requires $\Phi$ to agree with
the assumption that such proposals are always accepted. In Appendix~\ref{ex:running}, we show that
Condition 3 is independent of Conditions 1 and 2. We also show that more than one coupled acceptance mechanism $\Phi$
can relate the same pair $\bq \in \Gamma(Q,Q)$ and $\bp \in \Gamma(P,P)$.

\begin{figure}[t]
\centering
\includegraphics[width=.65 \linewidth]{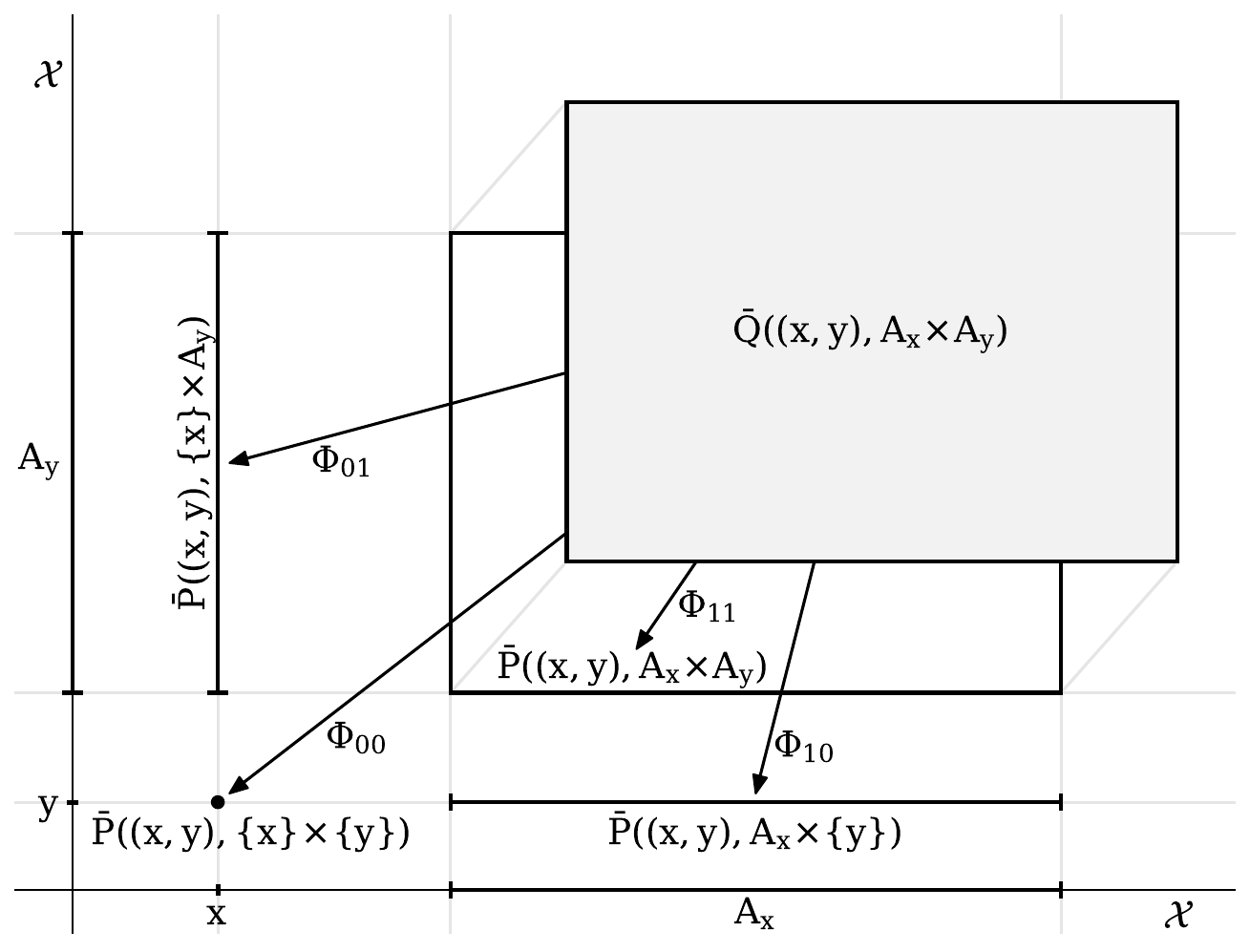}
\caption{
Schematic diagram of a coupled acceptance mechanism $\Phi$ relating a proposal coupling $\bq$ and a
transition kernel coupling $\bp$. Here $(x,y)$ represents the current state and $\axy$ is a
measurable rectangle in $\calXp$. $\bq(\xy, \axy)$ gives the probability of a proposal $\xyp \in
\axy$. The coupled acceptance mechanism $\Phi = (\Phi_{11},\Phi_{10}, \Phi_{01}, \Phi_{00})$
distributes this probability into contributions to the probability $\bp(\xy, \cd)$ of transitions to
the sets $\axy$, $A_x \times \sy$, $\sx \times A_y$, and $\sx \times \sy$. The conditions of
Definition~\ref{def:cam} ensure that $\Phi$ agrees with both $\bq$ and $\bp$. \label{fig:cam}
}
\end{figure}

The crux of Theorem~\ref{thm:repr} is to prove that every coupling of MH-like transition kernels
$\bp$ arises from a proposal and acceptance coupling. Thus, we begin by looking for coupled
acceptance mechanisms $\Phi$ that distribute the probability $\bq(\xy, \axy)$ of a proposal from
$\xy$ to $\xyp \in \axy$ into contributions to the probability of a transition from $\xy$ to $\XY
\in \axy$, $A_x \times \sy, \sx \times A_y$, and $\sx \times \sy$ in a consistent way. The following
result establishes the necessity of the conditions in Definition~\ref{def:cam} for this task.

\smallskip

\begin{lemma}
\label{lem:generation}
Let $\bp \in \Gamma(P,P)$ be a coupling of MH-like transition kernels.
If a proposal coupling $\bq$ and an acceptance indicator coupling $\bb$ generate $\bp$,
then there exists a coupled acceptance mechanism $\Phi$
relating $\bq$ and $\bp$.
\end{lemma}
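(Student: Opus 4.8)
The plan is to take $\Phi$ to be the joint law of the coupled proposal and the coupled acceptance indicators that $\bq$ and $\bb$ produce, and then to check the three conditions of Definition~\ref{def:cam} in turn. Fix $x,y \in \calX$, draw $\xyp \sim \bq(\xy,\cdot)$ and then $\bxy \sim \bb(\xy,\xyp)$, so that $\XY = (b_x x' + (1-b_x)x,\ b_y y' + (1-b_y)y) \sim \bp(\xy,\cdot)$ by the generation hypothesis. It is convenient first to normalize $\bb$ so that $b_x = 1$ almost surely whenever $x' = x$, and likewise $b_y = 1$ whenever $y' = y$; this is harmless because replacing $b_x$ by $1$ on the event $\{x' = x\}$ leaves $X$ unchanged (a rejected proposal back to the current state has no effect), so the modified indicators still form an acceptance indicator coupling which, together with $\bq$, generates $\bp$. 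With this convention in force, define $\Phi_{ij}(\xy,\cdot) := \P\big(\xyp \in \cdot,\ b_x = i,\ b_y = j\big)$ for $i,j \in \{0,1\}$.

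Each $\Phi_{ij}(\xy,\cdot)$ is the restriction of a probability measure to an event, hence a sub-probability on $(\calXp,\scrFp)$, and the maps $\Phi_{ij}(\cdot,A)$ are measurable by a standard kernel-composition argument using the measurability of $\bq$, of $\bb$, and of the diagonal in the Polish space $\calX$; I would regard these points as routine. Condition~1 of Definition~\ref{def:cam} is then the law of total probability: summing the four $\Phi_{ij}(\xy,\cdot)$ recovers the law of $\xyp$, which is $\bq(\xy,\cdot)$. For Condition~2, I would split the event $\{\XY \in \axy\}$ by the value of $\bxy$. On $\{b_x = b_y = 1\}$ one has $\XY = \xyp$, contributing $\Phi_{11}(\xy,\axy)$; on $\{b_x = 1, b_y = 0\}$ one has $\XY = (x',y)$, so the event holds iff $x' \in A_x$ and $y \in A_y$, contributing $\Phi_{10}(\xy, A_x \times \calX)\,1(y \in A_y)$; the cases $\{b_x = 0, b_y = 1\}$ and $\{b_x = b_y = 0\}$ are symmetric and supply the remaining two terms of the displayed identity. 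Summing and using $\bp(\xy,\axy) = \P(\XY \in \axy)$ gives Condition~2.

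The step I expect to carry the real content is Condition~3, which is exactly where transitions back to the current state must be squared with the convention $a(x,x) = 1$. Here $(\Phi_{11} + \Phi_{10})(\xy, \sx \times \calX) = \P(x' = x,\ b_x = 1)$, and by the normalization above the event $\{x' = x\}$ is, up to a null set, contained in $\{b_x = 1\}$, so this equals $\P(x' = x) = Q(x,\sx)$ since $x' \sim Q(x,\cdot)$ is the first marginal of $\bq$; the claim for $y$ is identical. The subtlety is that the unnormalized choice of $\Phi$ need not satisfy this: a generating $\bb$ is free to reject a proposal $x' = x$ with positive probability, which would leave $(\Phi_{11} + \Phi_{10})(\xy, \sx \times \calX)$ strictly below $Q(x,\sx)$, so the normalization (or any equivalent device that re-labels self-rejections as acceptances) is what makes the argument go through. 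Everything else is bookkeeping.
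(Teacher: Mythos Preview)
Your proof is correct and follows essentially the same construction as the paper: define $\Phi_{ij}(\xy,\cdot) := \P(\xyp \in \cdot,\ b_x=i,\ b_y=j)$ and verify the three conditions of Definition~\ref{def:cam} by splitting on $\bxy$. The one substantive difference is your normalization step forcing $b_x=1$ on $\{x'=x\}$ (and likewise for $y$). The paper instead writes $\P(x'=x,\,b_x=1) = a(x,x)\,Q(x,\sx)$ directly, which tacitly uses $\P(b_x=1 \mid x'=x) = a(x,x)$; as you correctly observe, the bare generation hypothesis does not force this, since rejecting a self-proposal has no effect on $X$. Your normalization is a clean device that makes Condition~3 go through without that implicit assumption, so if anything your argument is slightly more careful here.
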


\begin{proof}
Fix $\xy$, let $(x', y') \sim \bq(\xy, \cdot)$, and let $(b_x, b_y) \sim \bb(\xy, \xyp)$.
For $A \in \scrFp$ and $i, j \in \{0,1\}$, define $\Phi_{ij}(\xy, A) := \P( \xyp \in A, b_x = i, b_y = j \mid x,y)$.
This $\Phi$ satisfies Condition 1, since for all $A_x, A_y \in \scrF$ we have
$\bq(\xy, \axy) = \P(\xyp \in \axy \, | \, x,y)$.
It also satisfies Condition 2, as
\eq{
	& \hspace{-1em} \bp(\xy, \axy) = \P(X \in A_x, Y \in A_y \mid x,y) \\
	& = \P(x' \in A_x, y' \in A_y, b_x=1, b_y=1 \mid x,y)
	+ \P(x' \in A_x, y \in A_y, b_x=1, b_y=0 \mid x,y) \\
	& \quad + \P(x \in A_x, y' \in A_y, b_x=0, b_y=1 \mid x,y)
	+ \P(x \in A_x, y \in A_y, b_x=0, b_y=0 \mid x,y) \\
	& = \Phi_{11}(\xy, \axy)
	+ \Phi_{10}(\xy, A_x \times \calX) 1(y \in A_y) \\
	& \quad + \Phi_{01}(\xy, \calX \times A_y) 1(x \in A_x)
	+ \Phi_{00}(\xy, \calXp) 1(x \in A_x) 1(y \in A_y).
}
Finally, for Condition 3 we have
\eq{
	( \Phi_{11} + \Phi_{10})(\xy, \sx \times \calX) &= \P(x' = x, b_x = 1) = a(x,x) Q(x, \sx) = Q(x,\sx) \\
	( \Phi_{11} + \Phi_{01})(\xy, \calX \times \sy) &= \P(y' = y, b_y = 1) = a(y,y) Q(y, \sy) = Q(y,\sy).
}
Thus $\Phi$ is a coupled acceptance mechanism relating $\bq$ and $\bp$.
\end{proof}

The $\bp$ condition of Definition~\ref{def:cam} takes a more intuitive, if less compact, form when
expressed in terms of measurable sets $A_x, A_y \in \scrF$ with $x \not \in A_x$ and $y \not \in
A_y$:

\smallskip

\begin{lemma}
\label{lem:cases}
Let $\bp \in \Gamma(P,P)$ and suppose $\Phi = (\Phi_{11},\Phi_{10}, \Phi_{01}, \Phi_{00})$ are a collection of measures on $(\calXp, \scrFp)$.
Then Condition 2 of Definition~\ref{def:cam} holds if and only if for all $A_x, A_y \in \scrF$ with $x \not \in A_x$ and $y \not \in A_y$, we have
\eq{
	\mathit 1. \ & \bp(\xy, \axy) = \Phi_{11}(\xy, \axy) \hspace{180pt} \\
	\mathit 2. \ & \bp(\xy, A_x \times \sy) = \Phi_{11}(\xy, A_x \times \sy) + \Phi_{10}(\xy, A_x \times \calX)   \\
	\mathit 3. \ & \bp(\xy, \sx \times A_y) = \Phi_{11}(\xy, \sx \times A_y) + \Phi_{01}(\xy, \calX \times A_y)   \\
	\mathit 4. \ & \bp(\xy, \sx \times \sy) = \Phi_{11}(\xy, \sx \times \sy) + \Phi_{10}(\xy, \sx \times \calX)   \\
	& \hspace{104pt} + \Phi_{01}(\xy, \calX \times \sy) + \Phi_{00}(\xy, \calX \times \calX).}
\end{lemma}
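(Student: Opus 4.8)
The plan is to prove both implications by relating an arbitrary measurable rectangle $\axy$ to the four special rectangles in the statement, using the disjoint decomposition of $A_x \times A_y$ along the singletons $\sx$ and $\sy$. Throughout I will use that every $\Phi_{ij}(\xy,\cd)$ and $\bp(\xy,\cd)$ is a measure, hence vanishes on any rectangle with an empty factor.

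For the \emph{forward} direction, I would assume Condition~2 of Definition~\ref{def:cam} and simply substitute singletons for one or both factors. Fix $A_x, A_y \in \scrF$ with $x \notin A_x$ and $y \notin A_y$. Plugging $(A_x, A_y)$ into Condition~2 makes the indicators $1(x \in A_x)$ and $1(y \in A_y)$ vanish, leaving equation~$1$. Plugging $(A_x, \sy)$ makes $1(y\in\sy)=1$ while $1(x \in A_x) = 0$, which gives equation~$2$; the substitution $(\sx, A_y)$ gives equation~$3$ by the same reasoning; and $(\sx, \sy)$ turns on all indicators and yields equation~$4$. Here I am using that Condition~2 is quantified over \emph{all} $A_x, A_y \in \scrF$, so nothing prevents substituting the singletons even though $\sy$ contains $y$.

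For the \emph{converse}, I would start from arbitrary $A_x, A_y \in \scrF$ and write $A_x = \axm \cupdot \axs$ and $A_y = \aym \cupdot \ays$, where $\axs$ equals $\sx$ if $x \in A_x$ and $\emptyset$ otherwise (and likewise for $\ays$). This induces a partition of $A_x \times A_y$ into the four rectangles $\axym$, $\axm \times \ays$, $\axs \times \aym$, and $\axs \times \ays$. Finite additivity of $\bp(\xy,\cd)$ then expresses $\bp(\xy,\axy)$ as the sum of $\bp$ over these four pieces, and applying equations~$1$--$4$ to each piece (with the degenerate cases $x \notin A_x$ or $y \notin A_y$ handled automatically, since the measures vanish when $\axs$ or $\ays$ is empty) rewrites this as a sum of $\Phi_{ij}$ evaluations. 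The final step is to regroup: the four $\Phi_{11}$ contributions reassemble into $\Phi_{11}(\xy,\axy)$ because $\{\axm,\axs\}$ and $\{\aym,\ays\}$ partition $A_x$ and $A_y$; the two $\Phi_{10}$ contributions combine, via $\axm \cupdot \axs = A_x$, into $\Phi_{10}(\xy, A_x \times \calX)\,1(y \in A_y)$; the $\Phi_{01}$ contributions combine symmetrically into $\Phi_{01}(\xy, \calX \times A_y)\,1(x \in A_x)$; and the single $\Phi_{00}$ contribution is $\Phi_{00}(\xy, \calXp)\,1(x \in A_x)\,1(y \in A_y)$. This is exactly Condition~2.

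There is no deep obstacle here; the step requiring the most care is the regrouping in the converse, together with the observation that writing $\axs$ as $\sx$ or $\emptyset$ makes the singleton-indicator terms of Condition~2 emerge organically from the additive decomposition instead of being inserted by hand.
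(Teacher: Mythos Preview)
Your proposal is correct and follows essentially the same approach as the paper's proof: both directions rely on the disjoint decomposition $A_x = \axm \cupdot \axs$, $A_y = \aym \cupdot \ays$ and finite additivity of the measures involved. Your write-up is in fact more explicit than the paper about the regrouping of the $\Phi_{ij}$ terms in the converse, which the paper summarizes in a single sentence.
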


\begin{proof}
The equalities above follow by evaluating Condition 2 of Definition~\ref{def:cam} at the four
possible combinations of $\sx$ or $A_x \sm \sx$ and $\sy$ or $A_y \sm \sy$. For the converse, note that for
any $A_x, A_y \in \scrF$ we have
\eq{
	\bp(\xy, \axy) & = \bp(\xy, \axm \times \aym) + \bp(\xy, \axm
	\times \sy) 1(y \in A_y) \\ & + \bp(\xy, \sx \times \aym) 1(x \in A_x) + \bp(\xy, \sx \times \sy)
	1(x \in A_x) 1(y \in A_y).
}
Replacing these terms with the equalities above yields Condition 2 of Definition~\ref{def:cam}.
\end{proof}

Condition 1 of Lemma~\ref{lem:cases} reflects the fact that the only way for a joint chain to
transition from $\xy$ to $\axy$ with $x \not \in A_x$ and $y \not \in A_y$ is to propose and accept
a transition to some $\xyp \in \axy$. Condition 2 says that the probability of transitioning to a
point in $A_x \times \sy$ amounts to the combined probability of accepting a proposal directly to
such a point or by proposing a move to some $\xyp \in A_x \times \calX$, with $x'$ accepted and $y'$
rejected. The other two conditions have similar interpretations.


\subsection{Existence of a coupled acceptance mechanism}
\label{sec:camexistence}

To prove Theorem~\ref{thm:repr} we must show that any MH-like transition kernel coupling $\bp \in
\Gamma(P,P)$ arises from a proposal coupling $\bq \in \Gamma(Q,Q)$ and an acceptance indicator
coupling $\bb$ with certain properties. In this subsection we show that for any $\bp \in
\Gamma(P,P)$, there exists a kernel coupling $\bq \in \Gamma(Q,Q)$ and a coupled acceptance
mechanism $\Phi$ relating $\bq$ and $\bp$. In the next subsection we show that we can transform this
$\Phi$ into an acceptance coupling $\bb$. Finally, we use these results to prove our main theorem.
We will make use of the following objects, which are defined for $x \in \calX$ and $A \in \scrF$:
\begin{alignat*}{2}
	\alpha_0(x, A) & := Q(x, A \setminus \sx) - P(x, A \setminus \sx) \quad
	\alpha_1(x, A) && := Q(x, A \cap \sx) + P(x, A \setminus \sx) \\
	\beta(x) & :=\begin{cases}
		\frac{Q(x,\sx)}{P(x,\sx)} & \text{if } P(x,\sx) > 0 \\
		1 & \text{otherwise}
	\end{cases}
	\hspace{18pt}
	\mu(x,A) && := \begin{cases}
		\frac{\alpha_0(x,A)}{\alpha_0(x,\calX)} & \text{if } \alpha_0(x,\calX) > 0 \\
		1(x \in A) & \text{otherwise.}
	\end{cases}
\end{alignat*}
Note that these definitions depend only on the marginal kernels $Q$ and $P$. Each of the above has a simple
interpretation in terms of the underlying chains:

\smallskip

\begin{lemma}
\label{lem:qualitative}
Let $x \in \calX$, $A \in \scrF$, $x' \sim Q(x, \cdot)$, $b_x \sim B(x,x') = \Bern(a(x,x'))$,
and ${X := b_x x' + (1-b_x) x}$,
so that by definition
$X \sim P(x, \cdot)$.
Then $\alpha_i(x, A) = \P(x' \in A, b_x=i \g x)$ for $i \in \zo$, $\beta(x) = \P(b_x = 1 \g x,
X=x)$ when $P(x, \sx) > 0$, and $\mu(x, A) = \P(x' \in A \g b_x = 0, x)$ when
$\alpha_0(x,\calX)>0$.
\end{lemma}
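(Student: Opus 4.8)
The plan is to reduce everything to one observation: since $P$ is generated by $Q$ and $a$ (Definition~\ref{def:mh-like}), a rejected proposal leaves the chain at $x$, so on any set avoiding $x$ the kernel $P$ is exactly the ``accepted part'' of $Q$. Concretely, I would first record that
\[
P(x, C) = \int_C a(x,x')\, Q(x, \diff x') \qquad \text{for every } C \in \scrF \text{ with } x \notin C .
\]
This holds because when $b_x = 0$ we have $X = x \notin C$, so $\{X \in C\} = \{x' \in C,\, b_x = 1\}$; conditioning on $x'$ and using $\P(b_x = 1 \g x, x') = a(x,x')$ gives the identity. Note also that $a(x,\cdot)$ being $Q(x,\cdot)$-measurable makes all integrals below well defined.

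Next I would prove the claim for $\alpha_1$. Conditioning on $x'$ gives $\P(x' \in A,\, b_x = 1 \g x) = \int_A a(x,x')\, Q(x, \diff x')$. Splitting the domain of integration into $A \cap \sx$ and $A \sm \sx$: on $A \cap \sx$ the integrand equals $a(x,x) = 1$, contributing $Q(x, A \cap \sx)$, while on $A \sm \sx$ the displayed identity gives $P(x, A \sm \sx)$. Adding these yields $\alpha_1(x,A)$. The formula for $\alpha_0$ then follows by subtracting from $Q(x, A) = \P(x' \in A \g x)$ and using $Q(x, A) - Q(x, A \cap \sx) = Q(x, A \sm \sx)$, so $\P(x' \in A,\, b_x = 0 \g x) = Q(x, A \sm \sx) - P(x, A \sm \sx) = \alpha_0(x,A)$.

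The statements about $\beta$ and $\mu$ are then short consequences. For $\beta$: since $b_x = 1$ forces $X = x'$, the events $\{b_x = 1,\, X = x\}$ and $\{b_x = 1,\, x' = x\}$ coincide, so $\P(b_x = 1,\, X = x \g x) = \alpha_1(x, \sx) = Q(x, \sx)$; dividing by $\P(X = x \g x) = P(x, \sx)$, which is positive by hypothesis, gives $\beta(x)$. For $\mu$: taking $A = \calX$ in the $\alpha_0$ claim gives $\P(b_x = 0 \g x) = \alpha_0(x, \calX)$, so when this is positive, $\P(x' \in A \g b_x = 0, x) = \alpha_0(x, A)/\alpha_0(x, \calX) = \mu(x, A)$.

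I do not anticipate a real obstacle here; the only points requiring care are the decomposition $A = (A \cap \sx) \cup (A \sm \sx)$ together with the use of $a(x,x) = 1$, the appeal to the generation property of $P$ for the key identity, and the fact that the two conditional-probability statements are asserted only on the events of positive probability that correspond exactly to the nontrivial cases in the definitions of $\beta$ and $\mu$.
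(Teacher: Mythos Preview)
Your proposal is correct and follows essentially the same approach as the paper's proof: both rely on the decomposition $A = (A \cap \sx) \cup (A \sm \sx)$, the identity $a(x,x)=1$, and the observation that $\{X \in C\} = \{x' \in C,\, b_x=1\}$ when $x \notin C$. The only differences are organizational---you isolate the identity $P(x,C) = \int_C a(x,x')\,Q(x,\diff x')$ up front and prove $\alpha_1$ first, then obtain $\alpha_0$ by subtraction, whereas the paper computes $\alpha_0$ and $\alpha_1$ each directly---but these are equivalent routes through the same argument.
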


\begin{proof}
By assumption, the acceptance rate function has $a(x,x)=1$ for all $x$.
Thus $\P(x' \in A \cap \sx, b_x = 0 \g x) = 0$, and
\eq{
	\alpha_0(x,A)
	&= Q(x, A \sm \sx) - P(x, A \sm \sx)
	= \P(x' \in A \sm \sx \g x) - \P(X \in A \sm \sx \g x) \\
	&= \P(x' \in A \g x ) - \P(x' \in A, b_x=1 \g x)
	= \P(x' \in A, b_x=0 \g x).
}
Similarly,
\eq{
	& \alpha_1(x,A)
	= Q(x, A \cap \sx) + P(x, A \sm \sx)
	= \P(x' \in A \cap \sx \g x ) + \P(X \in A \sm \sx \g x) \\
	& \quad = \P(x' \in A \cap \sx, b_x=1 \g x ) + \P(x' \in A \sm \sx, b_x=1 \g x)
	= \P(x' \in A, b_x=1 \g x ).
}
Suppose $P(x, \sx) > 0$. Then since $a(x, x) = \P(b_x = 1 \g x, x' = x) = 1$ we have
\eq{
	\beta(x)
	& = \frac{Q(x, \sx)}{P(x, \sx)}
	= \frac{\P(b_x = 1, x'=x \g x ) }{\P(X = x \g x)}
	= \frac{\P(b_x = 1, X=x \g x) }{\P(X = x \g x)}
	= \P(b_x = 1 \g x, X=x).
}
Finally, say $\alpha_0(x, \calX) > 0$. Then
\eq{
	\mu(x,A)
	& = \frac{\alpha_0(x, A)}{\alpha_0(x, \calX)}
	= \frac{ \P(x' \in A, b_x = 0 \g x)}{ \P(b_x = 0 \g x)} = \P(x' \in A \g b_x = 0, x).
}
Thus, each of $\alpha_0, \alpha_1, \beta, \sas \mu$ has a simple meaning in terms of the joint
probability of $x'$ and $b_x$ induced by the proposal kernel $Q$ and the acceptance indicator
distribution $B$.
\end{proof}

We will also need the following properties:

\smallskip

\begin{lemma}
\label{lem:identities}
Let $x \in \calX$ and $A \in \scrF$.
Then
${Q(x, A \cap \sx) \! = \! P(x, A \cap \sx) \beta(x)}$,
${Q(x, A) \! = \! \alpha_1(x, A) + \mu(x, A) \alpha_0(x, \calX)}$,
$\beta(x) \in [0,1]$, and
$\mu(x, \cdot)$ is a measure on $(\calX, \scrF)$,
\end{lemma}

\begin{proof}
When $P(x, \sx) > 0$, $\beta(x) \in [0,1]$ by Lemma~\ref{lem:qualitative}. Otherwise $\beta(x) = 1$,
so in either case $\beta(x) \in [0,1]$. Also by Lemma~\ref{lem:qualitative}, we observe that $\mu(x,
\cdot)$ is the indicator of a measurable set or a well-defined conditional probability, and so in
either case it defines a measure on $(\calX, \scrF)$.

For the first $Q$ equality, if $x \not \in A$, then the $Q(x, A \cap \sx) = 0 = P(x, A \cap \sx)
\beta(x)$. If $x \in A$ then we want to show $Q(x, \sx) = P(x, \sx) \beta(x)$. This holds by the
definition of $\beta(x)$ when $P(x, \sx) > 0$. Since $a(x,x) = 1$ for all $x \in \calX$, $P(x, \sx)
= 0$ implies $Q(x, \sx) = 0$, so the result also holds in this case.

When $\alpha_0(x, \calX) > 0$, the second $Q$ equality holds by the definitions of $\mu, \alpha_0,
\sas \alpha_1$. This leaves the ${\alpha_0(x, \calX)=0}$ case. In general, $\alpha_0(x, \calX) =
Q(x, \sx^c) - P(x, \sx^c) = \int (1 - a(x,x')) Q(x, \diff x')$, since $a(x,x)=1$ for all $x \in
\calX$. Thus $\alpha_0(x, \calX)=0$ implies $a(x,x')=1$ for $Q(x, \cdot)$-almost all $x'$, so
${\alpha_1(x,A)} = {Q(x, A \cap \sx) + \int_{A \setminus \sx} a(x,x') Q(x, \diff x')} = {Q(x, A)}$.
Thus the second equality also holds when $\alpha_0(x, \calX) = 0$.
\end{proof}

We now describe the main lemma used in our proof of Theorem~\ref{thm:repr}.

\smallskip

\begin{lemma} \label{lem:camdpc}
For any coupling $\bp \in \Gamma(P,P)$ of MH-like transition kernels, there exists a proposal
coupling $\bq \in \Gamma(Q,Q)$ and a coupled acceptance mechanism $\Phi$ relating $\bq$ and $\bp$.
\end{lemma}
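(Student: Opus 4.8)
The plan is to construct $\bq$ and $\Phi$ directly from $\bp$ by reverse-engineering, for each of the two chains, a proposal and an acceptance decision consistent with the marginal dynamics. Fix $\xy$, draw $(X,Y)\sim\bp(\xy,\cd)$, and build proposals $\xyp$ and acceptance indicators $\bxy$ on an enlarged probability space as follows. On the $x$-side: if $X\neq x$, the move must have been proposed and accepted, so set $b_x=1$ and $x'=X$; if $X=x$, the chain either proposed $x'=x$, which is accepted since $a(x,x)=1$, or proposed some $x'\neq x$ that was rejected, so set $b_x=1,\,x'=x$ with probability $\beta(x)$ and otherwise $b_x=0,\,x'\sim\mu(x,\cd)$. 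Do the symmetric thing on the $y$-side, with the $x$- and $y$-side randomisations conditionally independent given $(X,Y)$. Finally set $\Phi_{ij}(\xy,\cd):=\P(\xyp\in\cd,\,b_x=i,\,b_y=j)$ and let $\bq(\xy,\cd)$ be the law of $\xyp$, which equals $\sum_{ij}\Phi_{ij}(\xy,\cd)$.

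This is well-defined: Lemmas~\ref{lem:qualitative} and~\ref{lem:identities} give $\beta(x)\in[0,1]$ and that $\mu(x,\cd)$ is a probability measure. Moreover $b_x=0$ forces $X=x$, so $\P(b_x=0)=\alpha_0(x,\calX)$, and since $a(x,x)=1$ gives $\mu(x,\sx)=0$ whenever $\alpha_0(x,\calX)>0$, we get $\P(x'=x,\,b_x=0)=0$; hence almost surely $x'=x$ exactly when $X=x$ and the $\Bern(\beta(x))$ draw is $1$, an event of probability $P(x,\sx)\beta(x)=Q(x,\sx)$ by Lemma~\ref{lem:identities}. Measurability of $\xy\mapsto\Phi_{ij}(\xy,A)$ and $\xy\mapsto\bq(\xy,A)$ follows because these kernels are obtained by composing $\bp$ with an explicitly specified randomisation kernel built from the measurable objects $\beta,\mu$; the diagonal $\{X=x\}$ along which the construction branches is itself measurable in the Polish space $\calX$.

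It then remains to check the three conditions of Definition~\ref{def:cam} and that $\bq(\xy,\cd)\in\Gamma(Q,Q)$. Condition 1 holds by construction. For $\bq\in\Gamma(Q,Q)$ I compute the $x$-marginal: $\P(x'\in A_x)=\P(x'\in A_x,b_x=1)+\P(x'\in A_x,b_x=0)$, where on $\{b_x=1\}$ we have $x'=X$, so the first term is $\P(X\in A_x,b_x=1)=P(x,A_x\setminus\sx)+Q(x,A_x\cap\sx)=\alpha_1(x,A_x)$, and on $\{b_x=0\}$ we have $X=x$ and $x'\sim\mu(x,\cd)$, so the second term is $\alpha_0(x,\calX)\,\mu(x,A_x)=\alpha_0(x,A_x)$; their sum is $Q(x,A_x)$, and the $y$-side is symmetric. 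Condition 3 of Definition~\ref{def:cam} is exactly the identity $\P(x'=x,b_x=1)=Q(x,\sx)$ noted above (and its $y$-analogue). For Condition 2 I invoke Lemma~\ref{lem:cases}: expanding each $\Phi_{ij}$ into the event defining it, its four identities reduce to the set equalities $\{X\in A_x\}=\{x'\in A_x,\,b_x=1\}$ (for $x\notin A_x$) and $\{X=x\}=\{b_x=0\}\cup\{b_x=1,\,x'=x\}$, together with their $y$-analogues and the conditional independence of the two sides --- all immediate from $X=b_xx'+(1-b_x)x$ and $\mu(x,\sx)=0$.

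I expect the main obstacle to be the measurability step --- turning the pointwise, random-variable construction into genuine sub-probability \emph{kernels} $\Phi_{ij}$ --- which is routine but fiddly, together with the degenerate cases $P(x,\sx)=0$ and $\alpha_0(x,\calX)=0$, which are absorbed by the conventions defining $\beta$ and $\mu$. One could equivalently avoid random variables and define the $\Phi_{ij}$ by explicit formulas: for instance $\Phi_{11}$ equals $\bp$ on $(\calX\setminus\sx)\times(\calX\setminus\sy)$, $\beta(y)\,\bp$ on $(\calX\setminus\sx)\times\sy$, $\beta(x)\,\bp$ on $\sx\times(\calX\setminus\sy)$, and $\beta(x)\beta(y)\,\bp$ on $\sx\times\sy$, with analogous expressions for $\Phi_{10},\Phi_{01},\Phi_{00}$ involving $1-\beta$ and $\mu$, and then verify the three conditions by direct computation.
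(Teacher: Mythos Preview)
Your proposal is correct and is essentially the same construction as the paper's, presented probabilistically rather than measure-theoretically. Your random-variable description---draw $(X,Y)\sim\bp(\xy,\cd)$, then on each side flip a $\Bern(\beta)$ coin on the event $\{X=x\}$ and, if it fails, resample the proposal from $\mu$---produces exactly the same $\Phi_{ij}$ as the paper's explicit formulas (as you note at the end): your $\Phi_{11}$ coincides with the paper's four-term expression in $\beta(x),\beta(y)$, and your $\Phi_{10},\Phi_{01},\Phi_{00}$ are precisely the product measures $\Psi_{10}\otimes\mu(y,\cd)$, $\mu(x,\cd)\otimes\Psi_{01}$, and $\Psi_{00}\,\mu(x,\cd)\otimes\mu(y,\cd)$ used there. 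The paper then verifies Definition~\ref{def:cam} by direct computation with these formulas, whereas your verification via Lemma~\ref{lem:cases} exploits the built-in identity $X=b_xx'+(1-b_x)x$, which makes Conditions~2 and~3 nearly automatic. The trade-off is exactly as you flag: your route is more transparent conceptually, while the paper's explicit formulas make the sub-probability-kernel measurability immediate without needing to compose $\bp$ with an auxiliary randomisation kernel.
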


\begin{proof}
The proof proceeds in three steps. First we define a collection of sub-probability kernels $\Phi =
(\Phi_{11}, \Phi_{10}, \Phi_{01}, \Phi_{00})$ in terms of $\bp$. Then we use $\Phi$ to define a
kernel coupling $\bq \in \Gamma(Q,Q)$. Finally, we show that $\Phi$ satisfies the conditions of
Definition~\ref{def:cam}, making it a coupled acceptance mechanism relating $\bq$ and $\bp$.

We begin with an explicit formula for $\Phi_{11}$ in terms of $\bp$. For $x, y \in \calX$ and $A \in
\scrFp$, let
\eq{
	\Phi_{11}(\xy, A) & := \bp \big( \xy, A \cap (\sx^c \times \sy^c) \big) + \bp \big( \xy, A \cap (\sx \times \sy^c) \big)\,  \beta(x) \\
				     & + \bp \big( \xy, A \cap (\sx^c \times \sy) \big)\,  \beta(y) + \bp \big( \xy, A \cap (\sx \times \sy) \big)\, \beta(x) \, \beta(y).
}
Here $\Phi_{11}$ is a sub-probability kernel, since $\beta(\cdot) \in [0,1]$ by
Lemma~\ref{lem:identities}. We use a product measure construction to define the other three
components of $\Phi$. For $x,y \in \calX$ and $A_x, A_y \in \scrF$, let
\ec{
	\Psi_{10}(\xy, A_x) := \alpha_1(x,A_x) - \Phi_{11}(\xy, A_x \times \calX)
	\qaq
	\Psi_{01}(\xy, A_y) := \alpha_1(y,A_y) - \Phi_{11}(\xy, \calX \times A_y).
}
We claim that $\Psi_{10}(\xy,\cd)$ and $\Psi_{01}(\xy,\cd)$ are sub-probabilities on $(\calX,
\scrF)$. Indeed, by Lemma~\ref{lem:identities} and the fact that $\bp \in \Gamma(P,P)$, we have
\eq{
	\alpha_1(x, A_x)
	&= P(x, A_x \sm \sx) + P(x, A_x \cap \sx) \beta(x) \\
	& = \bp\big((x,y), \axm  \times \sy^c\big)
	+ \bp\big((x,y), \axs  \times \sy^c\big) \beta(x)  \\
	& + \bp\big((x,y), \axm \times \sy\big) +
	\bp\big((x,y), \axs \times \sy\big) \beta(x).
}
Plugging $A_x \times \calX$ into the definition of $\Phi_{11}$ yields analogous terms:
\eq{
	& \Phi_{11}\big((x,y), A_x \times \calX\big)
	= \bp\big((x,y), \axm \times \sy^c\big)
	+ \bp\big((x,y), \axs \times \sy^c\big) \beta(x)  \\
	& \qquad +\bp\big((x,y), \axm \times \sy\big)\beta(y)
	+ \bp\big((x,y), \axs \times \sy\big) \beta(x)\beta(y).
}
Thus $\Psi_{10}(\xy, A_x) = \big( \bp( \xy, \axm \times \sy) + \bp( \xy, \axs \times \sy ) \beta(x)
\big) (1-\beta(y))$, and similarly for $\Psi_{01}(\xy, A_y)$. It follows that $\Psi_{10}(\xy,
\cdot)$ and $\Psi_{01}(\xy, \cdot)$ are sub-probabilities. We also set
\eq{
	\Psi_{00}(x,y) := 1 -
	\alpha_1(x,\calX) - \alpha_1(y, \calX) + \Phi_{11} (\xy, \calXp).
}
Algebraic manipulations show
that $\Psi_{00}(x,y) =  \bp(\xy, \sx \times \sy) (1 - \beta(x)) (1-\beta(y)) \in [0,1]$.

With these results in hand, we define $\Phi_{10}(\xy, \cdot), \Phi_{01}(\xy, \cdot), \sas
\Phi_{00}(\xy, \cdot)$ as the product measures extending the following equalities to $(\calX \times
\calX, \scrF \otimes \scrF)$:
\ec{
	\Phi_{10}(\xy, A_x \times A_y) = \Psi_{10}(\xy,  A_x) \mu(y, A_y) \qquad
	\Phi_{01}(\xy, A_x \times A_y) = \mu(x, A_x) \Psi_{01}(\xy, A_y) \\
	\Phi_{00}(\xy, \axy) = \mu(x, A_x) \mu(y, A_y) \Psi_{00}(x,y).
}
Thus each component of $\Phi = (\Phi_{11}, \Phi_{10}, \Phi_{01}, \Phi_{00})$ is a sub-probability kernel.

Next, we set $\bq(\xy, A) := (\Phi_{11}+\Phi_{10}+\Phi_{01}+\Phi_{00})\big(\xy, A \big)$ for all $A
\in \scrFp$. To see that $\bq \in \Gamma(Q,Q)$, note that for any $x,y \in \calX$ and $A_x \in \scrF$ we have
\eq{
	\bq(\xy, A_x \times \calX)
	& = (\Phi_{11}+\Phi_{10}+\Phi_{01}+\Phi_{00})\big(\xy, A_x \times \calX \big) \\
	& = (\Phi_{11} + \Phi_{10})(A_x \times \calX) + \mu(x, A_x) (\Phi_{01} + \Phi_{00})(\calXp) \\
	& = \alpha_1(x,A_x) + \mu(x, A_x) \alpha_0(x, \calX) = Q(x, A_x).
}
The second-to-last equality follows from the definitions of $\Psi_{10}$, $\Psi_{01}$ and
$\Psi_{00}$, while the last equality is due to Lemma~\ref{lem:identities}. A similar argument shows
that $\bq(\xy, \calX \times A_y) = Q(y, A_y)$ for any $x,y \in \calX$ and $A_y \in \scrF$. Thus $\bq
\in \Gamma(Q,Q)$.

Finally, we show that $\Phi$ satisfies the conditions of Definition~\ref{def:cam}. The $\bq$
condition is automatically satisfied given the definitions above. For the $\bp$ condition, we check
the four cases described in Lemma~\ref{lem:cases}. For the $\axym$ case, we have
$\Phi_{11}(\xy, \axym) = \bp(\xy, \axym)$.
For the $\axm \times \sy$ case,
\eq{
	& \Phi_{11}(\xy, \axm \times \sy) + \Phi_{10}(\xy, \axm \times \calX) \\
	& = P(x, A_x \sm \sx) - \bp(\xy, \axm \times \sy^c)
	= \bp(\xy, \axm \times \sy).
}
Here we have used the definitions of $\Phi_{11}$ and $\Phi_{10}$ and the fact that $\alpha_1(x, A
\sm \sx) = P(x, A \sm \sx)$ for the first equality. The second equality follows from the definition
of $\bp \in \Gamma(P,P)$. By a similar argument
\eq{
	\Phi_{11}(\xy, \sx \times \aym) + \Phi_{10}(\xy, \calX \times \aym) = \bp(\xy, \sx \times \aym).
}
For the $\sx \times \sy$ case we have
\eq{
	& \Phi_{11}(\xy, \sx\! \times\! \sy) + \Phi_{10}(\xy, \sx\! \times\! \calX)
	+ \Phi_{01}(\xy, \calX\! \times\! \sy) + \Phi_{00}(\xy, \calX \! \times \! \calX) \\
	& = \Phi_{11}(\xy, \sx \times \sy) + \Psi_{10}(\xy, \sx) + \Psi_{01}(\xy, \sy) + \Psi_{00}(x,y) \\
	& = 1 + \Phi_{11}(\xy, \sx^c \times \sy^c) - \alpha_1(x, \sx^c) - \alpha_1(y, \sy^c)
	= \bp(\xy, \sx \times \sy).
}
For the last condition of Definition~\ref{def:cam}, note that by the definitions of $\Phi$ and
$\alpha_1$,
\eq{
	(\Phi_{11} + \Phi_{10}) (\xy, \sx \times \calX) & = \alpha_1(x, \sx) = Q(x, \sx) \\
	(\Phi_{11} + \Phi_{01}) (\xy, \calX \times \sy) & = \alpha_1(y, \sy) = Q(y ,\sy).
}
We conclude that $\Phi$ is a coupled acceptance mechanism relating $\bp$ and $\bq$.
\end{proof}


\subsection{Existence of an acceptance indicator coupling}

Next, we show that if we have a coupled acceptance mechanism $\Phi$ relating $\bq$ and $\bp$, then
there exists an acceptance indicator coupling $\bb$ such that $\bq$ and $\bb$ generate $\bp$. In the
following, we write $\Delta^{n-1}$ for the set of multinomial distributions on a set with $n$
elements.

\smallskip

\begin{lemma}
\label{lem:rnderivs} Say $\bp \in \Gamma(P,P)$, $\bq \in \Gamma(Q,Q)$, and $\Phi$ is a coupled
acceptance mechanism relating $\bp$ and $\bq$. Then there exist $\bq(\xy,\cdot)$-measurable
functions $\phi_{ij}(\xy, \cdot)$ for $i,j \in \{0,1\}$ with $\phi = (\phi_{11}, \phi_{10},
\phi_{01}, \phi_{00})\big(\xy, \xyp \big) \in \Delta^3$ for $\bq(\xy,\cdot)$-almost all $\xyp$. If
we define an acceptance indicator coupling $\bb$ so that $(b_x, b_y) \sim \bb(\xy, \xyp)$ implies
${\P ( b_x = i, b_y = j \g x,y,x',y') = \phi_{ij}\big( \xy, \xyp \big)}$, then $\bq$ and $\bb$
generate $\bp$.
\end{lemma}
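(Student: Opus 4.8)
The plan is to realize the functions $\phi_{ij}$ as (jointly measurable versions of) Radon--Nikodym derivatives of the $\Phi_{ij}$ with respect to $\bq$, build $\bb$ from them, and then verify generation directly against the three conditions of Definition~\ref{def:cam}.

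First I would note that Condition~1 of Definition~\ref{def:cam} says $\Phi_{11}+\Phi_{10}+\Phi_{01}+\Phi_{00} = \bq$ as measures on $(\calXp,\scrFp)$. Hence, for each fixed $\xy$, every $\Phi_{ij}(\xy,\cdot)$ is a non-negative measure dominated by $\bq(\xy,\cdot)$, so $\Phi_{ij}(\xy,\cdot)\ll\bq(\xy,\cdot)$ with a Radon--Nikodym derivative valued in $[0,1]$, and the four derivatives sum to $1$ for $\bq(\xy,\cdot)$-almost every $\xyp$. This alone gives $\bq(\xy,\cdot)$-measurable functions $\phi_{ij}(\xy,\cdot)$ with $\big(\phi_{11},\phi_{10},\phi_{01},\phi_{00}\big)(\xy,\xyp)\in\Delta^3$ for $\bq(\xy,\cdot)$-almost all $\xyp$.

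Second -- and this is the only delicate point -- I would upgrade these to functions that are \emph{jointly} measurable in $(\xy,\xyp)$, as is needed for $\bb$ to be a genuine acceptance indicator coupling. Since $\calX$ is Polish, $\scrFp$ is countably generated, so I would fix an increasing sequence of finite measurable partitions $\mathcal{P}_n$ generating $\scrFp$, let $C_n(\xyp)$ be the cell of $\mathcal{P}_n$ containing $\xyp$, and define the explicitly jointly measurable approximations $\phi_{ij}^{(n)}(\xy,\xyp):=\Phi_{ij}\big(\xy,C_n(\xyp)\big)\big/\bq\big(\xy,C_n(\xyp)\big)$, with the convention $0/0:=1/4$. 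For each fixed $\xy$, the processes $\phi_{ij}^{(n)}(\xy,\cdot)$ are $[0,1]$-valued martingales in $n$ with respect to the filtration $\big(\sigma(\mathcal{P}_n)\big)_n$ under $\bq(\xy,\cdot)$, so by the martingale convergence theorem they converge $\bq(\xy,\cdot)$-almost everywhere to $\diff\Phi_{ij}(\xy,\cdot)/\diff\bq(\xy,\cdot)$. Taking $\phi_{ij}:=\limsup_n\phi_{ij}^{(n)}$ yields jointly measurable $[0,1]$-valued functions agreeing $\bq(\xy,\cdot)$-a.e. with those derivatives for every $\xy$; on the set where the vector fails to lie in $\Delta^3$ -- which is $\bq(\xy,\cdot)$-null for each $\xy$ -- I would reset it to $(1,0,0,0)$, changing no integral identities. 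Defining $\bb(\xy,\xyp,\cdot)$ to be the distribution on $\{0,1\}^2$ assigning mass $\phi_{ij}(\xy,\xyp)$ to $(i,j)$ then gives an acceptance indicator coupling with $\P(b_x=i,b_y=j\g x,y,x',y')=\phi_{ij}(\xy,\xyp)$.

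Finally I would verify generation. Fix $\xy$, let $\xyp\sim\bq(\xy,\cdot)$, $\bxy\sim\bb(\xy,\xyp)$, and $(X,Y):=\big(b_x x'+(1-b_x)x,\ b_y y'+(1-b_y)y\big)$. For a measurable rectangle $\axy$, condition on the four outcomes of $\bxy$: the $(1,1)$ outcome sets $(X,Y)=\xyp$ and contributes $\int_{\axy}\phi_{11}\,\bq(\xy,\dxyp)=\Phi_{11}(\xy,\axy)$; the $(1,0)$ outcome sets $(X,Y)=(x',y)$ and contributes $1(y\in A_y)\int_{A_x\times\calX}\phi_{10}\,\bq(\xy,\dxyp)=\Phi_{10}(\xy,A_x\times\calX)\,1(y\in A_y)$; the $(0,1)$ and $(0,0)$ outcomes contribute $\Phi_{01}(\xy,\calX\times A_y)\,1(x\in A_x)$ and $\Phi_{00}(\xy,\calXp)\,1(x\in A_x)1(y\in A_y)$ by the same reasoning. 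Summing these is exactly the right-hand side of Condition~2 of Definition~\ref{def:cam}, which equals $\bp(\xy,\axy)$. Since the measurable rectangles form a $\pi$-system generating $\scrFp$ and both sides are finite measures, $(X,Y)\sim\bp(\xy,\cdot)$, so $\bq$ and $\bb$ generate $\bp$. The main obstacle is the joint measurability of Step two; it could alternatively be obtained by citing a standard measurable-disintegration result for kernels between Polish spaces, but the martingale construction keeps the argument self-contained.
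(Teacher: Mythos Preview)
Your proof is correct and follows the same approach as the paper: take Radon--Nikodym derivatives $\phi_{ij}$ of the $\Phi_{ij}$ with respect to $\bq$, observe they sum to one $\bq(\xy,\cdot)$-almost everywhere so that $\phi\in\Delta^3$, define $\bb$ accordingly, and verify generation by summing the four acceptance scenarios against Condition~2 of Definition~\ref{def:cam}. Your martingale construction in Step two secures joint measurability of the $\phi_{ij}$ in $(\xy,\xyp)$---a point the paper's proof leaves implicit---and is a genuine refinement, though note it tacitly assumes each $\Phi_{ij}(\cdot,A)$ is measurable in $\xy$, which is not stated in Definition~\ref{def:cam} but does hold for the $\Phi$ actually constructed in Lemma~\ref{lem:camdpc}.
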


\begin{proof}
For each $i,j \in \{0,1\}$, we have $\Phi_{ij}(\xy, \cdot) \ll \bq(\xy, \cdot)$ by the $\bq$
condition of  Definition~\ref{def:cam}. Thus we can form Radon--Nikodym derivatives
$\phi_{ij}(\xy, \cdot) := \mathrm{d} \Phi_{ij}(\xy, \cdot) / \mathrm{d} \bq(\xy, \cdot)$. The RN
derivative is linear and $\mathrm{d} \bq (\xy, \cdot) / \mathrm{d} \bq(\xy, \cdot) = 1$, so for
$\bq(\xy, \cdot)$-almost all $\xyp$ we have $\sum_{i,j \in \{0,1\} } \phi_{ij}(\xy,\xyp) = 1$. Since
$\phi_{ij} \geq 0$, this implies that each $\phi_{ij} \leq 1$. Thus $\phi(\xy, \xyp) \in \Delta^3$
for $\bq(\xy, \cdot)$-almost all $\xyp$.

For points $\xyp$ where the above holds, let $\bxy \sim \bb(\xy, \xyp)$ be the multinomial random
variable on $\{0,1\}^2$ with $\P\big( b_x = i, b_y = j \g x,y,x',y' \big) = \phi_{ij}\big( \xy, \xyp
\big)$ for $i,j \in \{0,1\}$. Suppose that $\xyp \sim \bq(\xy, \cdot)$ and let $X := x' b_x + x
(1-b_x)$ and $Y := y' b_y + y (1-b_y)$. Then for any $A_y, A_y \in \scrF$ we have
\eq{
	\P((X,Y) \in \axy, b_x=1, b_y=1 \g x,y)	&= \Phi_{11}(\xy, \axy) \\
	\P((X,Y) \in \axy, b_x=1, b_y=0 \g x,y) &= \Phi_{10}(\xy, A_x \times \calX) 1(y \in A_y) \\
	\P((X,Y) \in \axy, b_x=0, b_y=1 \g x,y) &= \Phi_{01}(\xy, \calX \times A_y) 1(x \in A_x) \\
	\P((X,Y) \in \axy, b_x=0, b_y= 0 \g x,y) &= \Phi_{00}(\xy, \calX \times \calX) 1(x \in A_x)1(y \in A_y).
}
It follows from these expressions together with Definition~\ref{def:cam} that
${ \P((X,Y) \in \axy) = \bp(\xy, \axy) } $ on all measurable rectangles $\axy$, and hence
$\P((X,Y) \in A) = \bp(\xy, A)$ for all ${A \in \scrFp}$. We conclude that $\bq$ and $\bb$ generate $\bp$.
\end{proof}

Next, we show that when a joint proposal kernel $\bq$ and a joint transition kernel $\bp$ are
related by a coupled acceptance mechanism $\Phi$, then the acceptance indicator coupling derived
above satisfies the conditions of Theorem~\ref{thm:repr}.

\smallskip

\begin{lemma}
\label{lem:rndproperties}
Let $\Phi$ be a coupled acceptance mechanism relating $\bp \in \Gamma(P,P)$ and $\bq \in
\Gamma(Q,Q)$, and let $\phi$ and $\bb$ be as in the proof of Lemma~\ref{lem:rnderivs}. If  $\bxy \!
\sim \! \bb(\xy, \xyp)$, then
\begin{enumerate}
	\item $\P(b_x=1 \g x,y,x') = a(x,x')$ for $Q(x,\cdot)$-almost all $x'$, and
	\item $\P(b_y=1 \g x,y,y')= a(y,y')$ for $Q(y,\cdot)$-almost all $y'$.
\end{enumerate}
\end{lemma}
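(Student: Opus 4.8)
The plan is to pin down the joint conditional law of the proposal $x'$ and its acceptance indicator $b_x$ under the coupling generated by $\bq$ and $\bb$, match it against the law of a $Q$-proposal together with an MH-rate acceptance, and then disintegrate against $x'$. Fix $x,y\in\calX$, let $\xyp\sim\bq(\xy,\cdot)$, and let $\bxy\sim\bb(\xy,\xyp)$ as in the proof of Lemma~\ref{lem:rnderivs}, so $\P(b_x=i,b_y=j\g x,y,x',y')=\phi_{ij}(\xy,\xyp)$ with $\phi_{ij}=\diff\Phi_{ij}(\xy,\cdot)/\diff\bq(\xy,\cdot)$. Since $b_x=1\iff\bxy\in\{(1,1),(1,0)\}$ and $\phi_{11},\phi_{10}$ are Radon--Nikodym derivatives against $\bq(\xy,\cdot)$, for every $A_x\in\scrF$ we get
\eq{
\P(x'\in A_x,\,b_x=1\g x,y) = \int_{A_x\times\calX}(\phi_{11}+\phi_{10})(\xy,\xyp)\,\bq(\xy,\diff\xyp) = (\Phi_{11}+\Phi_{10})(\xy,A_x\times\calX).
}
Because $\bq\in\Gamma(Q,Q)$, the conditional law of $x'$ given $x,y$ is $Q(x,\cdot)$; hence, once we know $(\Phi_{11}+\Phi_{10})(\xy,A_x\times\calX)=\int_{A_x}a(x,x')\,Q(x,\diff x')$ for all $A_x$, the defining property of conditional probability makes $a(x,\cdot)$ a version of $\P(b_x=1\g x,y,\cdot)$, i.e. $\P(b_x=1\g x,y,x')=a(x,x')$ for $Q(x,\cdot)$-almost all $x'$. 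By Lemma~\ref{lem:qualitative} (or directly from $a(x,x)=1$), $\int_{A_x}a(x,x')\,Q(x,\diff x')=\alpha_1(x,A_x)$, so it remains to prove $(\Phi_{11}+\Phi_{10})(\xy,A_x\times\calX)=\alpha_1(x,A_x)$ for all $A_x\in\scrF$.

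To prove that identity I would split on whether $A_x$ contains $x$. If $x\notin A_x$, then Lemma~\ref{lem:cases} applied with $A_y=\calX\setminus\sy$ (Condition~1) and with our $A_x$ (Condition~2) gives $\bp(\xy,A_x\times(\calX\setminus\sy))=\Phi_{11}(\xy,A_x\times(\calX\setminus\sy))$ and $\bp(\xy,A_x\times\sy)=\Phi_{11}(\xy,A_x\times\sy)+\Phi_{10}(\xy,A_x\times\calX)$; adding these and using $\bp\in\Gamma(P,P)$ yields $(\Phi_{11}+\Phi_{10})(\xy,A_x\times\calX)=P(x,A_x)=\alpha_1(x,A_x)$, the last equality holding since $x\notin A_x$. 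For arbitrary $A_x$, write $A_x=(A_x\setminus\sx)\,\cupdot\,(A_x\cap\sx)$: the first piece is handled by the case just treated, and when $x\in A_x$ the second piece contributes $(\Phi_{11}+\Phi_{10})(\xy,\sx\times\calX)=Q(x,\sx)$ by Condition~3 of Definition~\ref{def:cam}; summing and comparing with $\alpha_1(x,A_x)=Q(x,A_x\cap\sx)+P(x,A_x\setminus\sx)$ gives the identity. The companion identity $(\Phi_{11}+\Phi_{01})(\xy,\calX\times A_y)=\alpha_1(y,A_y)$, and hence the second claim of the lemma, follows by the mirror-image argument using the remaining cases of Lemma~\ref{lem:cases} and the second half of Condition~3.

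I expect the identity for $(\Phi_{11}+\Phi_{10})(\xy,A_x\times\calX)$ to be the only real obstacle: it is the step that genuinely needs all three conditions of a coupled acceptance mechanism at once — the $\bp$ condition, repackaged through Lemma~\ref{lem:cases}, controls the measure on rectangles whose first side avoids $x$, while Condition~3 fixes its mass on the singleton $\sx$ — and it hinges on the small but essential observation that every measurable set decomposes as a set not containing $x$ together with, possibly, $\sx$. The rest is bookkeeping: unwinding $b_x=1$ into two of the four multinomial cases, invoking $\phi_{ij}=\diff\Phi_{ij}/\diff\bq$, and a standard disintegration of $(x',b_x)$ with reference measure $Q(x,\cdot)$.
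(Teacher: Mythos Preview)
Your proposal is correct and follows essentially the same route as the paper: compute $\P(x'\in A_x,\,b_x=1\g x,y)=(\Phi_{11}+\Phi_{10})(\xy,A_x\times\calX)$, show this equals $\alpha_1(x,A_x)=\int_{A_x}a(x,x')\,Q(x,\diff x')$ by splitting $A_x$ into $A_x\setminus\sx$ (handled via the $\bp$ condition) and $A_x\cap\sx$ (handled via Condition~3), and conclude by uniqueness of the conditional expectation/Radon--Nikodym derivative. The only cosmetic difference is that you invoke Lemma~\ref{lem:cases} to obtain $(\Phi_{11}+\Phi_{10})(\xy,(A_x\setminus\sx)\times\calX)=P(x,A_x\setminus\sx)$, whereas the paper plugs $A_x\setminus\sx$ and $A_y=\calX$ directly into Condition~2 of Definition~\ref{def:cam}; these are equivalent.
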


\begin{proof}
Let $a_x :=  \phi_{11} + \phi_{10}$.
Then $\P(b_x = 1 \g x, y, x') = \E_{\bar Q((x,y),\cdot)}[a_x(\xy, \xyp) \g x, y, x']$,
and $a_x(\xy, \cdot)$ is $\bq(\xy, \cdot)$-measurable.
For all $A \in \scrF$, we have
\eq{
	& \int 1(x' \in A) \P(b_x = 1 \g x, y, x') \bq(\xy, \dxyp)
	= \int 1(x' \in A) a_x(\xy, \xyp) \bq(\xy, \dxyp) \\
	& = (\Phi_{11}+\Phi_{10})(\xy, A \times \calX)
	= Q(x, \sx \cap A) + P(x, A \setminus \sx)
	= \int_A a(x,x') Q(x, \diff x') \\
	& = \int 1(x' \in A) a(x,x') \bq(\xy, \dxyp).
}
The first equality follows from the defining property of conditional expectations. Condition 2 of
Definition~\ref{def:cam} implies ${ (\Phi_{11}+\Phi_{10})(\xy, (A \sm \sx) \times \calX) = \bp(\xy,
	(A \sm \sx) \times \calX) }$, while Condition 3 implies that ${ Q(x, A \cap \sx) } =
{(\Phi_{11}+\Phi_{10})(\xy, (A \cap \sx) \times \calX)}$. These combine to yield the third equality,
above.

By the essential uniqueness of the Radon--Nikodym derivative, $\P(b_x = 1 \g x, y, x') = a(x, x')$
for all $x'$ in a measurable set $\tilde A \in \scrF$ with $\bq(\xy, \tilde A \times \calX) = Q(x,
\tilde A) = 1$. Thus this equality holds for $Q(x,\cdot)$-almost all $x' \in \calX$. A similar
argument shows that $\P(b_y=1 \g x, y, y') = a(y,y')$ for $Q(y,\cdot)$-almost all $y' \in \calX$.
\end{proof}

\subsection{Main result}

Having established the lemmas above, we can now prove the main result of this section.

\begin{proof}[Proof of Theorem~\ref{thm:repr}]
For the `if' case, assume $\bq \in \Gamma(Q,Q)$ and $\bxy \sim$ $\bb(\xy,\xyp)$, an acceptance
indicator coupling with $\P(b_x=1 \g x,y,x') = a(x,x')$ for $Q(x,\cdot)$-almost all $x'$ and
$\P(b_y=1 \g x,y,y')= a(y,y')$ for $Q(y,\cdot)$-almost all $y'$. Let $\bp(\xy, \cdot)$ be the law of
$(X,Y) := (b_x x' + (1-b_x) x, b_y y' + (1 - b_y) y).$ Then for any $A \in \scrF$,
\eq{
	& \bp(\xy, A \times \calX)
	= \P(X \in A \g x,y) = \P(x' \in A, b_x=1 \g x,y) + \P(x \in A, b_x=0 \g x,y) \\
	& = \int 1(x' \in A) a(x,x') Q(x, \diff x') + 1(x \in A) \int (1-a(x,x')) Q(x, \diff x')
	= P(x, A).
}
Similarly, $\bp(\xy, \calX \times A) = P(y, A)$ for any $A \in \scrF$. Thus $\bp
\in \Gamma(P,P)$.

For the `only if' case, take any $\bp \in \Gamma(P,P)$. By Lemma~\ref{lem:camdpc}, there exists a
proposal coupling $\bq \in \Gamma(Q,Q)$ and a coupled acceptance mechanism $\Phi$ relating $\bq$ and
$\bp$. Then by Lemma~\ref{lem:rnderivs}, there exists an acceptance indicator coupling $\bb$ such
that $\bq$ and $\bb$ generate $\bp$. Finally, by Lemma~\ref{lem:rndproperties}, this $\bb$ will be
such that if $\bxy \sim \bb(\xy,\xyp)$, then $\P(b_x=1 \g x,y,x') = a(x,x')$ for $Q(x,\cdot)$-almost
all $x'$ and $\P(b_y=1 \g x,y,y')= a(y,y')$ for $Q(y,\cdot)$-almost all $y'$. Thus we conclude that
$\bp$ is generated by a proposal coupling $\bq$ and an acceptance indicator coupling $\bb$ with the
desired properties.
\end{proof}

\subsection{Simplified characterization of couplings}
\label{sec:cor}

To prove the `only if' part of Theorem~\ref{thm:repr} we constructed a map from an arbitrary
coupling $\bp \in \Gamma(P,P)$ to a proposal coupling $\bq \in \Gamma(Q,Q)$ and an acceptance
indicator coupling $\bb$ such that $\bq$ and $\bb$ generate $\bp$.  Lemmas
\ref{lem:qualitative}--\ref{lem:rndproperties} suggest a potentially complicated relationship
between $\bp$, $\bq$, and $\bb$, at least when $P$ is allowed to be any MH-like transition kernel.
As we show below in Corollary~\ref{cor:repr_non_atomic}, this relationship takes a more intuitive
form if we require $Q(x, \cdot)$ to be absolutely continuous with respect to the base measure and
require $Q(x, \sx) = 0$ for all $x \in \scrF$. These assumptions often hold, for instance when the state space
is continuous and $\lambda$ is non-atomic, when $Q$ represents a non-lazy random walk on a discrete
state space, and in many other cases.

We need to make a few definitions and analytical observations before stating the main result of this
section. First, for all $A_x \in \scrF$ let ${\bp_y(x, A_x)} := {\bp(\xy, (A_x \sm \sx) \times
	\sy)}$, so that $\bp_y(x, \cdot)$ is a sub-probability on $(\calX, \scrF)$. Then for all $x, y \in
\calX$, ${Q(x, \cdot) \ll \lambda}$ implies ${\bp_y(x, \cdot) \ll \lambda}$, since \eq{ \bp_y(x,
	A_x) & = \bp(\xy, (A_x \sm \sx) \times \sy) \leq \bp(\xy, (A_x \sm \sx) \times \calX) \\ & = P(x,
	A_x \sm \sx) = \int_{A_x \sm \sx} a(x,x') q(x,x') \lambda(\diff x'). } Here $q(x,\cdot) = \diff
Q(x,\cdot) / \diff \lambda$. Therefore $\bp_y(x, \cdot)$ will have density $\bar p_y(x, \cdot)$ with
respect to $\lambda$ by the Radon--Nikodym theorem. Similarly, $\bp_x(y, A_y) := \bp(\xy, \sx \times
(A_y \sm \sy))$ will have density $\bar p_x(y, \cdot)$ with respect to $\lambda$.

Let $r(x) := P(x, \sx)$, the probability of a transition from $x$ to itself under $P$. From the
definitions in Section~\ref{sec:camexistence}, we have $\beta(x) = 1(r(x) = 0)$ and
\eq{
	\mu(x, A_x) := \begin{cases}
		\frac{ Q(x, A_x) - P(x, A_x \sm \sx) } {r(x)} & \mbox{if } r(x) > 0 \\
		1(x \in A_x) & \mbox{if } r(x) = 0.
	\end{cases}
}
It follows that $\mu(x, A_x) (1 - \beta(x)) = (Q(x, A_x) - P(x, A_x \sm \sx))/r(x)$ if $r(x) >0$,
and 0 otherwise. Thus ${\mu(x, \cdot) (1 - \beta(x))}$ has density $m(x,x') := q(x,x')
(1-a(x,x'))/r(x)$ when $r(x) > 0$, and 0 otherwise.

Next, we separate $\bp$ into a part that is absolutely continuous with respect to $\lambda \times
\lambda$ and another part that is singular to it. By the Lebesgue decomposition theorem
\citep[e.g.][chapter 5.5]{Dudley2002} for each $x, y \in \calX$ we have ${\bp(\xy, \cdot)} =
{\bp_\ll(\xy, \cdot) + \bp_\perp(\xy, \cdot)}$ with $\bp_\ll(\xy, \cdot) \ll \lambda \times \lambda$
and $\bp_\perp(\xy, \cdot) \perp \lambda \times \lambda$. We then define $p(\xy, \cdot)$ to be the
density of $\bp_\ll(\xy, \cdot)$ with respect to $\lambda \times \lambda$. Finally, let $\bar r(x,y)
:= \bp(\xy, \sx \times \sy)$, the probability of a transition from $\xy$ to itself under $\bp$. With
these definitions, we can now give an explicit characterization of the couplings $\bp \in
\Gamma(P,P)$ under mild conditions on the proposal distribution.

\smallskip

\begin{corollary}
\label{cor:repr_non_atomic} Let $P$ be the MH-like transition kernel on $(\calX, \scrF)$ generated
by a proposal kernel $Q$ and an acceptance rate function $a$, and suppose that $Q(x, \cdot) \ll
\lambda$ with $Q(x,\sx) = 0$ for all $x \in \calX$. If a joint kernel $\bp \in \Gamma(P,P)$, then it
is generated by a proposal coupling $\bq = \bq_\ll + \bq_\perp \in \Gamma(Q,Q)$ and an acceptance
indicator coupling $\bb$ with the following properties for all $x,y \in \calX$ and $\bxy \sim
\bb(\xy, \xyp)$:
\begin{enumerate}
	\item $\bq_\perp(\xy, A) = \bp_\perp(\xy, A \cap (\sx^c \times \sy^c))$ for $A \in \scrF$,
	\item $\bq_\ll(\xy, \cdot)$ has the following density with respect to $\lambda \times \lambda$:
	\eq{
		q(\xy, \xyp)
		= p(\xy, \xyp) + \bar p_y(x, x') m(y,y')
		+ m(x,x') \bar p_x(y, y') + m(x,x') m(y, y') \bar r(x,y),
	}
	\item $b_x = b_y = 1$ almost surely when $\xyp$ is in the support of $\bq_\perp(\xy, \cdot)$,
\end{enumerate}
and for $\bq_\ll(\xy, \cdot)$-almost all $\xyp$,
\begin{enumerate}
	\setcounter{enumi}{3}
	\item $\P(b_x = 1 \g x,y,x',y')\, q(\xy, \xyp) = p(\xy, \xyp) + \bar p_y(x, x') m(y,y')$,
	\item $\P(b_y = 1 \g x,y,x',y')\, q(\xy, \xyp) = p(\xy, \xyp) + \bar p_x(y, y') m(x,x')$.
\end{enumerate}
\end{corollary}

\begin{proof}
Let $B_{xy} := \sx^c \times \sy^c$.
We may focus attention on the behavior of $\bq(\xy, \cdot)$ on $B_{xy}$,
since ${\bq(\xy, B_{xy}) = 1}$. This follows since for any $x,y \in \calX$,
$B_{xy}^c = (\sx \times \calX) \cup (\calX \times \sy)$,
$\bq(\xy, \sx \times \calX) = Q(x, \sx) = 0$, and
$\bq(\xy, \calX \times \sy) = Q(y, \sy) = 0$.

When $\bp \in \Gamma(P,P)$, Lemma~\ref{lem:camdpc} tells us that there exists a proposal
coupling $\bq \in \Gamma(Q,Q)$ and a coupled acceptance mechanism $\Phi$ relating $\bp$ and $\bq$.
In particular, for $x,y \in \calX$ and $A_x, A_y \in \scrF$ with $x \not \in A_x$ and $y \not
\in A_y$, the proof of that Lemma shows that we can use the following measures:
\ec{
	\bq(\xy, \axy) = (\Phi_{11} + \Phi_{10} + \Phi_{01} + \Phi_{00})(\xy, \axy) \\
	\Phi_{11}(\xy, \axy) = \bp ( \xy, \axy ) \\
	\Phi_{10}(\xy, \axy) = \bp( \xy, A_x \times \sy) (1 - \beta(y))  \mu(y, A_y) \\
	\Phi_{01}(\xy, \axy) = (1 - \beta(x)) \mu(x, A_x) \bp( \xy, \sx \times A_y) \\
	\Phi_{00}(\xy, \axy) = (1 - \beta(x)) \mu(x, A_x) (1-\beta(y)) \mu(y, A_y) \bp(\xy, \sx \times \sy).
}
From the Lebesgue decomposition of $\bp$, we have
\eq{
	\bp ( \xy, \axy ) = \bp_\perp(\xy,\axy) + \int_{\axy} p(\xy,\xyp) \diff \xyp.
}
The definitions above imply that $\Phi_{10}(\xy, \cdot)$ has density $p_y(x, x') m(y,y')$,
$\Phi_{01}(\xy, \cdot)$ has density $p_x(y, y') m(x,x')$, and $\Phi_{00}(\xy, \cdot)$ has density
$\bar r(x,y) m(x,x') m(y,y')$. Thus $\bq(\xy, \cdot)$ is the sum of a singular part ${\bq_\perp(\xy,
	A)} = {\bp(\xy, A \cap B_{xy})}$ and an absolutely continuous part $\bq_\ll(\xy, \cdot)$ with the
density function specified in Condition 2, above.

Next by Lemma~\ref{lem:rnderivs}, we know that there exists an acceptance indicator coupling $\bb$
such that $\bq$ and $\bb$ generate $\bp$. This $\bb$ has the property that $\bq(\xy, \cdot)$-almost
all values of $\xyp$, if $\bxy \sim \bb(\xy,\xyp)$, then
\eq{
	\P(b_x=1 \g x,y,x',y') &= \diff (\Phi_{11} + \Phi_{10})(\xy, \cdot) / \diff \bq(\xy, \cdot) \\
	\P(b_y=1 \g x,y,x',y') & = \diff (\Phi_{11} + \Phi_{01})(\xy, \cdot) / \diff \bq(\xy, \cdot).
}
For $A \in \scrF$ contained in the support of $\bq_\perp(\xy, \cdot)$, we have $\Phi_{11}(\xy, A) =
\bq(\xy, A)$ and ${\Phi_{10}(\xy, A)} = {\Phi_{01}(\xy, A) = 0}$, so the Radon--Nikodym derivatives
above equal 1 almost surely. This proves Condition 3. Otherwise, for $\bq(\xy, \cdot)$-almost all
points $\xyp$ with $q(\xy, \xyp) > 0$ we have
\eq{
	\P(b_x=1 \g x,y,x',y')
	& = \frac{p(\xy, \xyp) + \bar p_y(x, x') m(y,y')}{q(\xy,\xyp)}
}
and similarly for $\P(b_y \g x,y,x',y')$.
Finally, $p(\xy, \xyp) = \bar p_y(x,x') = \bar p_x(y,y') = 0$ whenever ${q(\xy, \xyp) = 0}$,
thus proving Conditions 4 and 5.
\end{proof}

Corollary \ref{cor:repr_non_atomic} applies to MH-like algorithms on both continuous and discrete
spaces, as long as $Q(x,\cdot)$ has a density or mass function and as long as the proposal $x'$
almost surely differs from the current point $x$. It provides an explicit way to represent any
kernel coupling in terms of proposal and acceptance indicator couplings.


\section{Maximal kernel couplings}
\label{sec:maximal}

We now apply the results above to characterize the maximal couplings of a general MH-like transition
kernel $P$. The identification of couplings $\bp \in \Gamma(P,P)$ that induce fast meeting between
chains is an important question for both theoretical analysis and in applied work. Maximal couplings
$\bp \in \Gmax(P,P)$ represent myopically optimal solutions to this problem, in the sense that they
achieve the highest one-step meeting probability $\P(X=Y \g x, y)$ from each state pair $(x,y)$.
Understanding the structure of $\Gmax(P,P)$ can aid in coupling design and serves as a reference
point in the search for efficient Markovian couplings of MH-like chains.

The literature so far has said little about $\Gmax(P,P)$.~\citet{OLeary2020} showed that when the
state space $\calX$ is continuous, this set contains at least a few computationally feasible
couplings. That paper also showed that for any maximal proposal coupling $\bq \in \Gmax(Q,Q)$, there
exists an acceptance indicator coupling $\bb$ such that the resulting transition kernel coupling
$\bp \in \Gmax(P,P)$. Maximal proposal couplings are often easy to construct, so this observation
provides a way to obtain a range of couplings in $\Gmax(P,P)$. It also suggests a closer
relationship between $\Gmax(Q,Q)$ and $\Gmax(P,P)$ than exists in general, as we discuss below.

In the following analysis we write $\Delta := \{(z,z) : z \in \calX\}$ for the diagonal of $\calXp$,
$\delta : \calX \to \Delta$ for the map ${z \mapsto (z,z)}$, and $A_\Delta := \delta(A) = \{(z,z) :
z \in A\}$. We continue to assume that $(\calX, \scrF)$ is a Polish space, so that $\Delta \in
\scrFp$ and so that $\delta$ is a measurable function. As noted above, the coupling inequality states that
if $\mu$ and $\nu$ are probability measures on $(\calX, \scrF)$ and if ${\gamma \in \Gamma(\mu,
	\nu)}$, then $\P_{\XY \sim \gamma}(X = Y) \leq 1 - \lVert \mu - \nu \rVert_\TV$. A maximal coupling
is one that achieves this bound. In the following section we see that this is equivalent to a
measure-theoretic condition, which we use to characterize maximal couplings of MH-like kernels in
Section~\ref{sec:maxc}.

\subsection{The Hahn maximality condition}

Given probability measures $\mu$ and $\nu$ on $(\calX, \scrF)$, the Hahn-Jordan theorem
\citep[e.g.][chapter 5.6]{Dudley2002} states that there exists a measurable set $S \in \scrF$ and
sub-probability measures $\mu^r$ and $\nu^r$ such that $\mu - \nu = \mu^r - \nu^r$ and $\mu^r(S^c) =
\nu^r(S) = 0$. The pair $(S,S^c)$ is called a Hahn decomposition for $\mu - \nu$, and it is
essentially unique in the sense that if $R \in \scrF$ is another measurable set with $\mu^r(R^c) =
\nu^r(R) = 0$, then $(\mu-\nu)(S \symdiff R) = 0$. Here $A \symdiff B = (A \sm B) \cup (B \sm A)$
indicates the symmetric difference of measurable sets $A, B \in \scrF$. The pair $(\mu^r, \nu^r)$ is
called the Jordan decomposition of $\mu - \nu$, and it is uniquely characterized by the above.

In the Jordan decomposition $\mu - \nu = \mu^r - \nu^r$, $\mu^r$ and $\nu^r$ are called the upper
and lower variation of $\mu - \nu$, and ${\mu \wedge \nu := \mu - \mu^r = \nu - \nu^r}$ is called
the meet or infimum measure of $\mu$ and $\nu$. Here $\mu \wedge \nu$ is non-negative and has the
defining property that if $\eta$ is another measure on $(\calX, \scrF)$ with $\eta(A) \leq \mu(A)
\wedge \nu(A)$ for all $A \in \scrF$, then $\eta(A) \leq (\mu \wedge \nu)(A)$ for all $A \in \scrF$.
By the definition of total variation, $\lVert \mu - \nu \rVert_\TV = \sup_{A \in \scrF}\lvert \mu(A)
- \nu(A) \rvert = \mu^r(\calX) = \nu^r(\calX) = 1 - (\mu \wedge \nu) (\calX)$. See e.g.
\citet[][chap. 5]{dshalalow2012foundations} or \citet[][sec. 36]{aliprantis1998principles} for more
on the lattice-theoretic properties of the set of measures on $(\calX, \scrF)$.

For any measure $\mu$ on $(\calX, \scrF)$ let $\delta_\star \mu$ be the pushforward of $\mu$ by the
diagonal map $\delta$, so that $\delta_\star \mu (A) = \mu (\delta^{-1} (A))$ for $A \in \scrFp$.
This makes $\delta_\star \mu$ a measure on $(\calXp, \scrFp)$ with support contained in $\Delta$ and
with $\delta_\star \mu (B_\Delta) = \mu (B)$ for any $B \in \scrF$. In this notation, maximal
couplings can be characterized as follows:

\smallskip

\begin{lemma}[\citet{douc2018markov}, Theorem 19.1.6]
\label{lem:douc}
Let $\mu - \nu = \mu^r - \nu^r$ be the Jordan decomposition
of a pair of probability measures $\mu$ and $\nu$ on $(\calX, \scrF)$.
A coupling $\gamma \in \Gamma(\mu, \nu)$ is maximal if and only if there exists a
$\gamma^r  \in \Gamma(\mu^r, \nu^r )$
such that $\gamma(A) = \gamma^r(A) + \delta_\star \big( \mu \wedge \nu \big) (A)$ for all $A \in \scrFp$.
\end{lemma}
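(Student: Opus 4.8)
The plan is to reduce maximality to the single identity $\gamma(\Delta) = (\mu\wedge\nu)(\calX)$ and then obtain $\gamma^r$ as the restriction of a maximal $\gamma$ to $\Delta^c$. Write $\eta := \mu\wedge\nu$ and $\eps := \lVert \mu - \nu \rVert_\TV$, so that $\mu^r(\calX) = \nu^r(\calX) = \eps$ and $\eta(\calX) = 1-\eps$. Since a pair $(X,Y)$ satisfies $\{X=Y\}$ exactly on $\Delta$, the coupling inequality recorded above reads $\gamma(\Delta) \le 1-\eps = \eta(\calX)$ for every $\gamma \in \Gamma(\mu,\nu)$, and $\gamma$ is maximal iff $\gamma(\Delta) = \eta(\calX)$. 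Both implications are then proved against this reformulation.

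For the \emph{if} direction, suppose $\gamma = \gamma^r + \delta_\star\eta$ with $\gamma^r \in \Gamma(\mu^r,\nu^r)$. First I would check $\gamma \in \Gamma(\mu,\nu)$: since $\delta_\star\eta(A\times\calX) = \eta(\delta^{-1}(A\times\calX)) = \eta(A)$ and likewise $\delta_\star\eta(\calX\times A) = \eta(A)$, the marginals of $\gamma$ are $\mu^r + \eta = \mu$ and $\nu^r + \eta = \nu$. Next, because $\mu^r$ lives on the Hahn set $S$ and $\nu^r$ on $S^c$, the coupling $\gamma^r$ gives zero mass to $(S^c\times\calX)\cup(\calX\times S)$, hence is concentrated on $S\times S^c$, which is disjoint from $\Delta$; so $\gamma^r(\Delta) = 0$ and $\gamma(\Delta) = \delta_\star\eta(\Delta) = \eta(\delta^{-1}(\Delta)) = \eta(\calX)$, i.e. $\gamma$ is maximal.

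For the \emph{only if} direction, let $\gamma$ be maximal and set $\rho(B) := \gamma(B_\Delta) = \gamma\big(\Delta \cap (B\times\calX)\big)$ for $B \in \scrF$; this is a finite measure on $(\calX,\scrF)$ since $\Delta \in \scrFp$ and $B \mapsto B_\Delta$ preserves countable disjoint unions. From $B_\Delta \subseteq B\times\calX$ and $B_\Delta \subseteq \calX\times B$ we get $\rho(B) \le \mu(B)$ and $\rho(B) \le \nu(B)$ for all $B$, so the infimum property of $\mu\wedge\nu$ gives $\rho \le \eta$ pointwise; but maximality gives $\rho(\calX) = \gamma(\Delta) = \eta(\calX)$, and a finite measure dominated by $\eta$ with the same total mass must equal $\eta$ (apply $\rho \le \eta$ to complements), so $\rho = \eta$. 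Hence for every $A \in \scrFp$, $\delta_\star\eta(A) = \eta(\delta^{-1}(A)) = \rho(\delta^{-1}(A)) = \gamma\big(\delta(\delta^{-1}(A))\big) = \gamma(A \cap \Delta)$. Now define $\gamma^r(A) := \gamma(A \cap \Delta^c) = \gamma(A) - \gamma(A\cap\Delta) = \gamma(A) - \delta_\star\eta(A)$; this is a non-negative measure (the restriction of $\gamma$ to $\Delta^c$), and its marginals are $\gamma^r(B\times\calX) = \mu(B) - \eta(B) = \mu^r(B)$ and $\gamma^r(\calX\times B) = \nu^r(B)$, so $\gamma^r \in \Gamma(\mu^r,\nu^r)$ and $\gamma = \gamma^r + \delta_\star\eta$, as required.

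I expect the main obstacle to be the measure-theoretic bookkeeping on the diagonal in the \emph{only if} part: confirming that $\rho$ is a genuine measure on $(\calX,\scrF)$ (for which the Polish hypothesis, ensuring $\Delta \in \scrFp$ and $\delta$ measurable, is exactly what is used), and the step upgrading $\rho \le \eta$ to $\rho = \eta$ via equality of total masses. Everything else — the two marginal computations and the disjointness $\Delta \cap (S\times S^c) = \emptyset$ — is routine once the coupling inequality has been recast as $\gamma(\Delta) \le \eta(\calX)$.
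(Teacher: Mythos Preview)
The paper does not prove this lemma; it is quoted as Theorem~19.1.6 of \citet{douc2018markov} and used as a black box (the paper only records two short consequences after the statement and then proves Corollary~\ref{cor:recog} from it). So there is no paper proof to compare against directly.

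Your argument is correct and self-contained. The reduction of maximality to $\gamma(\Delta) = (\mu\wedge\nu)(\calX)$ is exactly the coupling-inequality reformulation the paper sets up in the preamble to Section~\ref{sec:maximal}, and both halves of your proof go through cleanly. In the \emph{if} direction the key observation that $\gamma^r$ is supported on $S\times S^c$ (hence misses $\Delta$) is right, since $\gamma^r(S^c\times\calX)=\mu^r(S^c)=0$ and $\gamma^r(\calX\times S)=\nu^r(S)=0$. In the \emph{only if} direction, the step where you define $\rho(B)=\gamma(B_\Delta)$, bound it by both marginals, invoke the lattice-infimum property of $\mu\wedge\nu$, and then upgrade $\rho\le\eta$ to equality via matching total masses is the natural route and is sound. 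The identity $\delta(\delta^{-1}(A)) = A\cap\Delta$ that you use is correct (both sides equal $\{(z,z):(z,z)\in A\}$), though it might read more transparently if written that way.

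It is worth noting that your \emph{only if} argument is essentially the same computation the paper carries out in the converse half of Corollary~\ref{cor:recog}, just organized around the infimum measure rather than the Hahn set; so while the paper does not prove the lemma, your proof is in the same spirit as the surrounding material.
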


Note that we must have $\mu^r(\calX) = \nu^r(\calX)$ for $\Gamma(\mu^r,\nu^r)$ to be
nonempty. This follows from the Jordan decomposition, since $0 = (\mu - \nu)(\calX) = (\mu^r -
\nu^r)(\calX)$. Also $\gamma^r(\Delta) = 0$, as
\eq{
	1 -\lVert \mu - \nu \rVert_\TV
	= \gamma(\Delta)
	= \gamma^r(\Delta) + \delta_\star(\mu \wedge \nu)(\Delta)
	= \gamma^r(\Delta) + (\mu \wedge \nu)(\calX)
	= \gamma^r(\Delta) + 1 -\lVert \mu - \nu \rVert_\TV.
}
Finally, we observe that Lemma~\ref{lem:douc} implies the maximal coupling recognition result of
\citet[Lemma 20]{Ernst2019}. Thus we have the following characterization of maximal couplings based
on the Hahn decomposition:

\smallskip

\begin{corollary}[Hahn Maximality Condition]
\label{cor:recog} Let $\mu$ and $\nu$ be measures on $(\calX, \scrF$). A coupling $\gamma \in
\Gamma(\mu, \nu)$ is maximal if and only if there is an $S \in \scrF$ with $\gamma( (S^c \times
\calX) \sm \Delta )$ $= \gamma( (\calX \times S) \sm \Delta ) = 0$. Any $(S,S^c)$ with this property
will be a Hahn decomposition for $\mu-\nu$.
\end{corollary}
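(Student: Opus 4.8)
The plan is to deduce the corollary directly from Lemma~\ref{lem:douc}, translating its description of maximal couplings (in terms of the Jordan decomposition $\mu^r,\nu^r$ and the pushforward $\delta_\star(\mu\wedge\nu)$) into the stated condition phrased purely in terms of off-diagonal mass vanishing on a ``half-space'' $S^c \times \calX$ and $\calX \times S$. For the forward direction, suppose $\gamma \in \Gamma(\mu,\nu)$ is maximal. Lemma~\ref{lem:douc} gives a $\gamma^r \in \Gamma(\mu^r, \nu^r)$ with $\gamma = \gamma^r + \delta_\star(\mu\wedge\nu)$. Take $(S, S^c)$ to be a Hahn decomposition for $\mu - \nu$, so $\mu^r(S^c) = \nu^r(S) = 0$. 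Then $\gamma^r(S^c \times \calX) = \mu^r(S^c) = 0$ and $\gamma^r(\calX \times S) = \nu^r(S) = 0$. Since $\delta_\star(\mu\wedge\nu)$ is concentrated on $\Delta$, we get $\gamma((S^c\times\calX)\sm\Delta) = \gamma^r((S^c\times\calX)\sm\Delta) + \delta_\star(\mu\wedge\nu)((S^c\times\calX)\sm\Delta) = 0 + 0 = 0$, and likewise for $(\calX\times S)\sm\Delta$.

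For the converse, suppose $\gamma \in \Gamma(\mu,\nu)$ and there is an $S$ with $\gamma((S^c\times\calX)\sm\Delta) = \gamma((\calX\times S)\sm\Delta) = 0$. The idea is to split $\gamma$ as $\gamma = \gamma^d + \gamma^r$ where $\gamma^d(A) := \gamma(A\cap\Delta)$ and $\gamma^r(A) := \gamma(A\sm\Delta)$, then show $\gamma^d = \delta_\star\eta$ for a measure $\eta$ on $\calX$ satisfying $\eta \leq \mu\wedge\nu$ with $\eta(\calX) = (\mu\wedge\nu)(\calX)$, forcing $\eta = \mu\wedge\nu$; the remaining piece $\gamma^r$ is then automatically a coupling of $\mu^r$ and $\nu^r$, and invoking Lemma~\ref{lem:douc} finishes the proof. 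Concretely: define $\eta(B) := \gamma(B_\Delta) = \gamma^d(B\times\calX)$ for $B\in\scrF$ (using that $\delta$ is measurable since $\calX$ is Polish, so $\Delta$ and $B_\Delta$ lie in $\scrFp$). Then $\gamma^d = \delta_\star\eta$. For any $B\in\scrF$, the off-diagonal vanishing condition gives $\gamma^r(B\times\calX) = \gamma((B\times\calX)\sm\Delta) = \gamma(((B\cap S^c)\times\calX)\sm\Delta) + \gamma(((B\cap S)\times\calX)\sm\Delta) = 0 + \gamma(((B\cap S)\times\calX)\sm\Delta)$, and similarly the second marginal condition controls the $\calX\times S$ direction; one shows $\eta(B) = \mu(B) - \gamma^r(B\times\calX)$, so $\eta \leq \mu$ pointwise, and symmetrically $\eta \leq \nu$, hence $\eta \leq \mu\wedge\nu$ by the defining (infimum) property of the meet measure. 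To get equality, evaluate at $B = \calX$: one checks $\eta(\calX) = \gamma(\Delta) \geq \gamma(S_\Delta) + \gamma((S^c)_\Delta)$, and using the vanishing conditions together with $\gamma(S^c\times\calX) = \mu(S^c)$ and $\gamma(\calX\times S) = \nu(S)$ one recovers $\eta(\calX) = \mu(S\cap\cdot)$-type bookkeeping that sums to $1 - \lVert\mu-\nu\rVert_\TV = (\mu\wedge\nu)(\calX)$. A non-negative measure bounded above by $\mu\wedge\nu$ with the same total mass must equal $\mu\wedge\nu$, so $\gamma^d = \delta_\star(\mu\wedge\nu)$. Then $\gamma^r = \gamma - \delta_\star(\mu\wedge\nu)$ has first marginal $\mu - (\mu\wedge\nu) = \mu^r$ and second marginal $\nu^r$, so $\gamma^r \in \Gamma(\mu^r,\nu^r)$, and Lemma~\ref{lem:douc} yields that $\gamma$ is maximal. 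The final sentence of the corollary — that any such $(S,S^c)$ is a Hahn decomposition — follows because $\mu^r(S^c) = \gamma^r(S^c\times\calX) = 0$ and $\nu^r(S) = \gamma^r(\calX\times S) = 0$, which is exactly the Hahn decomposition property.

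The main obstacle I expect is the careful bookkeeping in the converse: showing that the diagonal part $\eta$ has \emph{full} mass $(\mu\wedge\nu)(\calX)$ rather than merely $\eta \leq \mu\wedge\nu$. This requires combining the two off-diagonal vanishing hypotheses with the marginal constraints in just the right way — essentially reproving the ``only if'' half of the coupling inequality restricted to the set $S$ — and being attentive to the $\Delta$-versus-$B_\Delta$ distinctions and the measurability of $\Delta$ (which is where the Polish assumption is used). Everything else is routine measure algebra once Lemma~\ref{lem:douc} is available.
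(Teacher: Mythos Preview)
Your forward direction is identical to the paper's. For the converse, both you and the paper split $\gamma$ into its diagonal and off-diagonal parts, but the paper takes a shorter path: writing $\mu(B) - \nu(B) = \gamma((B\times\calX)\sm\Delta) - \gamma((\calX\times B)\sm\Delta)$ and noting that the first measure is supported on $S$ and the second on $S^c$, the paper invokes \emph{uniqueness of the Jordan decomposition} to conclude directly that these two measures equal $\mu^r$ and $\nu^r$. This immediately gives both that $(S,S^c)$ is a Hahn decomposition and that $\gamma(B_\Delta) = \mu(B) - \mu^r(B) = (\mu\wedge\nu)(B)$, with no separate total-mass argument needed.

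Your route via $\eta \leq \mu\wedge\nu$ and then $\eta(\calX) = (\mu\wedge\nu)(\calX)$ also works, and the step you flag as the ``main obstacle'' is in fact not hard: the vanishing hypotheses give $\eta(S) = \gamma(\calX\times S) = \nu(S)$ and $\eta(S^c) = \gamma(S^c\times\calX) = \mu(S^c)$; since always $(\mu\wedge\nu)\leq\nu$ and $(\mu\wedge\nu)\leq\mu$, you get $\eta(S) = \nu(S) \geq (\mu\wedge\nu)(S) \geq \eta(S)$ and likewise on $S^c$, so equality of total mass follows by sandwiching. One organizational point: your sketch defers the Hahn-decomposition claim to the very end, deducing it from $\gamma^r\in\Gamma(\mu^r,\nu^r)$, but the total-mass identity $\nu(S)+\mu(S^c) = (\mu\wedge\nu)(\calX)$ that you need earlier is essentially equivalent to the Hahn property, so it is cleaner to establish the Hahn property first (as the sandwich argument above does implicitly) rather than last. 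The paper's uniqueness-of-Jordan shortcut sidesteps this ordering issue entirely.
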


\begin{proof}
Let $\mu - \nu = \mu^r - \nu^r$ be a Jordan decomposition, so that we have $\mu^r(S^c) = \nu^r(S) =
0$ for some $S \in \scrF$. If ${\gamma \in \Gamma(\mu,\nu)}$ is maximal, Lemma~\ref{lem:douc}
implies that $\gamma(A) = \gamma^r(A) + \delta_\star(\mu \wedge \nu)(A)$ for all $A \in \scrFp$.
Therefore ${\gamma((S^c \times \calX) \sm \Delta)} = {\gamma^r(S^c \times \calX)} = {\mu^r(S^c) =
	0}.$ By a similar argument, $\gamma( (\calX \times S) \sm \Delta ) = 0$. For the converse, let
${\gamma \in \Gamma(\mu,\nu)}$ and ${\gamma( (S^c \times \calX) \sm \Delta )} = {\gamma( (\calX
	\times S) \sm \Delta ) = 0}$. Then $\mu(B) = \gamma(B \times \calX) = \gamma( (B \times \calX) \sm
\Delta) + \gamma(B_\Delta)$ and $\nu(B) = \gamma(\calX \times B) = \gamma( (\calX \times B) \sm
\Delta) + \gamma(B_\Delta)$ for any $B \in \scrF$.

By assumption, $S$ contains the support of $\gamma((\cd \times \calX) \sm \Delta)$ and $S^c$
contains the support of $\gamma((\calX \times \cd)\sm\Delta )$. Thus ${ \mu(\cd) - \nu(\cd) =
	\gamma((\cd \times \calX) \sm \Delta) - \gamma((\calX \times \cd)\sm\Delta)}$ is the Jordan
decomposition of $\mu - \nu$ and $(S,S^c)$ is a Hahn decomposition. The uniqueness of the Jordan
decomposition implies $\gamma((B \times \calX) \sm \Delta) = \mu^r(B)$ and $\gamma((\calX \times B)
\sm \Delta) = \nu^r(B)$, which in turn yields $\gamma^r(\cd) := \gamma(\cd \sm \Delta) \in
\Gamma(\mu^r, \nu^r)$. We also have $(\mu \wedge \nu)(B) = \mu(B) - \mu^r(B) = \nu(B) - \nu^r(B)$ ,
so the above implies $\gamma(B_\Delta) = (\mu \wedge \nu)(B)$ for all $B \in \scrF$. We conclude
that $\gamma(A) = \gamma(A \sm \Delta) + \gamma(A \cap \Delta)= \gamma^r(A) + \delta_\star(\mu
\wedge \nu)(A)$ for any $A \in \scrFp$.
\end{proof}

In Section~\ref{sec:maxc}, we use this result to establish conditions for $\bp \in \Gmax(P,P)$ in
terms of proposal and acceptance couplings. Before doing so, we need to clarify the relationship
between the maximality of a transition kernel coupling $\bp$ and the maximality of a proposal
coupling $\bq$ that generates it.

\subsection{Maximal proposal kernel couplings}
\label{sec:maxprop}

It seems reasonable to guess that if $\bp \in \Gmax(P,P)$ is generated by a proposal coupling $\bq
\in \Gamma(Q,Q)$ and an acceptance indicator coupling $\bb$, then $\bq$ must itself be maximal. The
proposal-based maximal couplings of~\citet{OLeary2020} have this property, and it seems plausible
that in order to maximize the probability of $X=Y$ one might need to start by maximizing the
probability of $x'=y'$. However, the following shows that no special relationship exists between
maximal proposal and transition couplings.

\smallskip

\begin{lemma}
\label{lem:maxtonon} Suppose the transition kernel coupling $\bp \in \Gmax(P,P)$ is generated by a
coupling ${\bq_m \in \Gmax(Q,Q)}$ and an acceptance coupling $\bb_m$. Suppose that $\bq_m(\xy,
\Delta^c) > 0$ for some $\xy$, and at that $\xy$, ${\P(b_x = b_y = 1 \g x,y) < 1}$ where
$(x_m',y_m') \sim \bq_m(\xy,\cd)$ and $(b_x,b_y) \sim \bb_m(\xy, (x_m',y_m'))$. Then there exists a
non-maximal coupling $\bq \in \Gamma(Q,Q)$ and an acceptance indicator coupling $\bb$ such that
$\bq$ and $\bb$ also generate $\bp$.
\end{lemma}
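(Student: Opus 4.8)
The idea is to perturb the maximal proposal coupling $\bq_m$ slightly so that it is no longer maximal, while absorbing the perturbation into the acceptance coupling so that the generated transition kernel coupling $\bp$ is unchanged. Concretely, I would use the hypothesis that at the distinguished state pair $\xy$ there is positive mass on proposed pairs $(x',y')$ with $x'\neq y'$ \emph{and} positive probability of at least one proposal being rejected there. The plan is to take a small piece of the off-diagonal proposal mass that gets (partially) rejected under $\bb_m$ and redistribute where the proposals land, compensating by adjusting the acceptance probabilities $\phi_{ij}(\xy,\cdot)$ so that the joint law of $(X,Y)$ from $\xy$, and hence all of $\bp$, is preserved.

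First I would set up the bookkeeping: at $\xy$, write $\bq_m(\xy,\cdot)$ and the acceptance kernel $\bb_m(\xy,\cdot)$ in the $\phi$-notation of Lemma~\ref{lem:rnderivs}, so that $\Phi^m_{ij}(\xy,\cdot)$ are the associated sub-probabilities and $\bp(\xy,\cdot)$ is recovered via Condition~2 of Definition~\ref{def:cam}. The hypothesis $\P(b_x=b_y=1\g x,y)<1$ together with $\bq_m(\xy,\Delta^c)>0$ guarantees, after intersecting the relevant sets, that there is a measurable rectangle $C_x\times C_y$ with $x\notin C_x$, $y\notin C_y$ (or an analogous off-diagonal region) carrying positive $\bq_m$-mass on which not both indicators are a.s.\ $1$; say $\Phi^m_{10}$, $\Phi^m_{01}$, or $\Phi^m_{00}$ puts positive mass there. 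For concreteness suppose $\Phi^m_{10}(\xy, C_x\times C_y)=:\varepsilon>0$, i.e.\ with positive probability the $x$-proposal into $C_x$ is accepted and the $y$-proposal into $C_y$ is rejected. All these proposals contribute to $\bp(\xy,\cdot)$ only through $\Phi^m_{10}(\xy,\,C_x\times\calX)\,1(y\in A_y)$ — that is, the destination of the \emph{rejected} $y'$ is irrelevant.

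The key step is the modification. I would move a small amount $\eta\in(0,\varepsilon]$ of this proposal mass off its current $y'$-location and onto $y'=y$ (the diagonal-in-$y$ direction), defining a new proposal coupling $\bq$ that agrees with $\bq_m$ except that on the relevant piece it proposes $(x', y)$ rather than $(x', y')$ with $y'\in C_y$. Since this new mass has $y'=y$, the marginal constraint forces these proposals to have acceptance probability $1$ in the $y$-coordinate (Condition~3), so I redefine $\bb$ on this piece to have $b_y=1$, $b_x=1$ — which is consistent because the new proposal still lands at the same $X$-value and now trivially at $Y=y$, matching the old outcome $(X,Y)=(x',y)$ exactly. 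One checks: (i) $\bq$ still has marginals $Q,Q$ — the $x$-marginal is untouched and the $y$-marginal moves mass within $\{y\}\cup C_y$ but $Q(y,\cdot)$ is preserved provided I only relocate mass that was going to be rejected, which requires a compensating decrease of $\Phi_{00}$ or $\Phi_{01}$ mass at $C_y$; here is where I must be slightly careful and may instead relocate mass \emph{to} an arbitrary point and simultaneously borrow from another rejection bucket to keep $Q(y,\cdot)$ fixed. (ii) The generated $\bp$ is unchanged, by the argument above that rejected-$y'$ destinations do not enter Condition~2. (iii) $\bq$ is not maximal: using Corollary~\ref{cor:recog}, $\bq_m$ maximal means $\bq_m((S^c\times\calX)\sm\Delta)=\bq_m((\calX\times S)\sm\Delta)=0$ for the Hahn set $S$ of $Q-Q=0$, i.e.\ $\bq_m$ is supported on $\Delta$ — wait, $Q=Q$ so $\|Q-Q\|_\TV=0$ and maximal means $\bq_m(\Delta^c)=0$; but the hypothesis says $\bq_m(\xy,\Delta^c)>0$, a contradiction.

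So in fact the main obstacle dissolves the hypotheses: since $P(x,\cdot)$ and the corresponding marginals here are \emph{equal} kernels evaluated at the \emph{same} pair only in the meeting sense — no; re-reading, $\bq_m\in\Gmax(Q,Q)$ means each $\bq_m((z_1,z_2),\cdot)$ is a maximal coupling of $Q(z_1,\cdot)$ and $Q(z_2,\cdot)$, which for $z_1\neq z_2$ certainly can have off-diagonal mass. The genuine difficulty, then, is \textbf{(a)} producing the modified $\bq$ while simultaneously preserving \emph{both} $Q$-marginals — relocating $y'$-mass changes $Q(y,\cdot)$ unless I pair the relocation with an opposite relocation drawn from another part of the off-diagonal that is also rejected in $y$, which is exactly what $\P(b_x=b_y=1\g x,y)<1$ plus $\bq_m(\xy,\Delta^c)>0$ buys me (there is ``slack'' in the rejected mass to shuffle); and \textbf{(b)} verifying measurability of the modified $\bb$ as a kernel and that $\bp(\xy',\cdot)$ for $\xy'\neq\xy$ is untouched, which is immediate since I only alter things at the single pair $\xy$. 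I would conclude by checking via Corollary~\ref{cor:recog} (applied to $Q(x,\cdot)$ vs.\ $Q(y,\cdot)$ at the pair $\xy$) that after the shuffle the off-diagonal-outside-$\Delta$ support condition fails — some mass now sits on the ``wrong side'' of every Hahn set — so $\bq$ is not maximal, while $\bq$ and $\bb$ still generate $\bp$ by Theorem~\ref{thm:repr}, since the two acceptance conditions there continue to hold by construction ($b_x$ and $b_y$ are still accepted at rate $a$ marginally, as I only moved mass onto $y'=y$ where $a(y,y)=1$, and symmetrically).
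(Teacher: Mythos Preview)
Your proposal has the right intuition --- the value of a rejected proposal $y'$ does not affect the resulting $Y$, so one can relocate such proposals without disturbing $\bp$ --- but the execution has a genuine gap. The specific perturbation you describe (moving a piece of $\Phi^m_{10}$-mass from $(x',y')\in C_x\times C_y$ onto $(x',y)$) does not preserve the $Q(y,\cdot)$ marginal, as you yourself note. Your proposed fix, to ``pair the relocation with an opposite relocation drawn from another part of the off-diagonal that is also rejected in $y$,'' does not work either: since $\bq_m$ is maximal, Corollary~\ref{cor:recog} forces its off-diagonal mass to lie in $S\times S^c$ for any Hahn set $S$ of $Q(x,\cdot)-Q(y,\cdot)$, so any swap of $y'$-values among rejected-$y'$ points keeps all that mass inside $S\times S^c$ and the Hahn condition continues to hold. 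You never exhibit a concrete modification that simultaneously (i) preserves both $Q$-marginals, (ii) preserves $\bp$, and (iii) strictly reduces $\bq(\xy,\Delta)$ or otherwise violates the Hahn condition. The mid-argument detour where you momentarily read $\Gmax(Q,Q)$ as a coupling of identical measures also signals that the construction was not carried through.

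The paper takes a different and cleaner route: rather than a local perturbation, it \emph{resamples} each rejected proposal coordinate independently from the conditional law of $(x'_m,y'_m)$ given the observed acceptance pattern $(b_x,b_y)$ (Algorithm~\ref{alg:nonmax}). This preserves the $Q$-marginals automatically --- the marginal law of, say, $y'$ given any fixed acceptance pattern is unchanged by resampling --- and it preserves $\bp$ because rejected coordinates never enter $(X,Y)$. Non-maximality is then argued from the decoupling of $x'$ and $y'$ on the not-both-accepted event, which drives $\P(x'=y'\g x,y)$ below $\P(x'_m=y'_m\g x,y)$. Your local-relocation idea might be salvageable, but it would require engineering a swap that moves mass across the Hahn boundary while respecting both marginals --- essentially reconstructing by hand the independence that the paper's resampling delivers in one stroke.
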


In the following proof, we use $\bq_m$ to construct a $\bq$ that agrees with $\bq_m$ on accepted
proposals and independently redraws rejected ones. The hypotheses on $\bq_m$ and on $\bxy$ are
needed to ensure that the resulting $\bq$ is not maximal.

\begin{algorithm}
	\caption{ Construction of $\bq$ for Lemma~\ref{lem:maxtonon} \label{alg:nonmax}}
	\normalsize
	\begin{enumerate}
		\item Draw $(x'_m, y'_m) \sim \bq_m(\xy, \cd)$ and $(b_x, b_y) \sim \bb_m(\xy, (x'_m, y'_m))$
		\item For $z \in \{x, y\}$:
		\begin{enumerate}
			\item If $b_z = 1$, set $z' = z'_m$. Else:
			\item Draw $(\tilde x, \tilde y) \sim \bq_m(\xy, \cd)$
			and $(\tilde b_x, \tilde b_y) \sim \bb_m(\xy, (\tilde x, \tilde y)$
			\item If $(\tilde b_x, \tilde b_y) = (b_x, b_y)$, set $z' = \tilde z$
			\item Else: go to 2(b)
		\end{enumerate}
		\item Return $(x', y')$ and $(b_x, b_y)$
	\end{enumerate}
\end{algorithm}

\begin{proof}
Let $\bq(\xy,\cd)$ be the distribution of the $\xyp$ output of Algorithm~\ref{alg:nonmax}.
We claim that $\bq \in \Gamma(Q,Q)$. For $A \in \scrF$,
\eq{
	\bq(\xy, A \times \calX)
	& = \P(x_m' \in A, b_x=1 \g x,y) + \hspace{-7pt} \sum_{j \in \{0,1\} } \P(b_x=0, b_y=j \g x,y) \P(\tilde x \in A \g \tilde b_x=0, \tilde b_y=j, x,y) \\
	& = \P(x_m' \in A, b_x=1 \g x,y) + \hspace{-7pt} \sum_{j \in \{0,1\} } \P(x_m' \in A, b_x=0, b_y=j \g x,y)
	= \P(x'_m \in A \g x) = Q(x,A).
}
Similarly, $\bq(\xy, \calX \times A) = Q(y,A)$. Let $\xy$ be such that $\bq_m(\xy, \Delta^c) > 0$.
We have assumed that there is a positive probability of rejecting either $x'_m$ or $y'_m$ at $\xy$.
Thus $\P(x' = y' \g x,y) < \P(x'_m = y'_m \g x,y)$, and so we conclude that $\bq$ is not a maximal
coupling.

Next, for $i,j \in \{0,1\}$ and $A \in \scrFp$, define $\Phi_{ij}(\xy, A) := \P((x',y') \in A, b_x =
i, b_y=j)$ using the full output of Algorithm~\ref{alg:nonmax}. We observe that this is a coupled
acceptance mechanism relating $\bq$ and $\bp$. The first condition of Definition~\ref{def:cam} is
satisfied by construction. For the second condition, define $X_m = b_x x_m' + (1-b_x) x$ and $Y_m =
b_y y_m' + (1-b_y) y$. Since $\bq_m$ and $\bb_m$ generate $\bp$, we must have $(X_m, Y_m) \sim
\bp(\xy,\cd)$. Thus for any $\axy \in \scrFp$,
\eq{
	&  \Phi_{11}(\xy, \axy) + \Phi_{10}(\xy, A_x \times \calX) 1(y \in A_y)  \\
	& \qquad + \Phi_{01}(\xy, \calX \times A_y) 1(x \in A_x)
	+ \Phi_{00}(\xy, \calX \times \calX) 1(x \in A_x) 1(y \in A_y) \\
	& = \P( X_0 \in A_x, Y_0 \in A_y, b_x = 1, b_y=1 \g x,y)
	+ \P( X_0 \in A_x, Y_0 = y, b_x = 1, b_y=0 \g x,y) \\
	& \qquad + \P( X_0 = x, Y_0 \in A_y, b_x = 0, b_y=1 \g x,y)
	+ \P( X_0 = x ,Y_0 = y, b_x = 0, b_y=0 \g x,y)
	= \bp(\xy, \axy).
}
The third condition on $\Phi$ follows from the fact that $b_x = 1$ if $x_m'=x$ and $b_y = 1$ if
$y_m'=y$. Since $\Phi$ is a coupled acceptance mechanism relating $\bq$ and $\bp$,
Lemma~\ref{lem:rnderivs} ensures the existence of a $\bb$ such that $\bq$ and $\bb$ generate $\bp$.
\end{proof}

Lemma \ref{lem:maxtonon} shows that if a maximal transition kernel coupling $\bp \in \Gmax(P,P)$ is
generated by a proposal coupling $\bq$ and an acceptance coupling $\bb$, then $\bq$ need not be
maximal. In Appendix~\ref{ex:nonmax} we observe that some maximal couplings $\bp$ cannot be
generated from any maximal coupling $\bq \in \Gmax(Q,Q)$. Although~\citet{OLeary2020} showed that we
can usually derive \textit{some} maximal coupling $\bp$ from a maximal coupling $\bq$, we conclude
that there is no general relationship between the maximality of a proposal kernel coupling and
that of an associated transition kernel coupling.

\subsection{Characterization of maximal kernel couplings}
\label{sec:maxc}

We now turn to the main result of this section, which extends Theorem~\ref{thm:repr} to characterize
the maximal couplings of an MH-like transition kernel in terms of proposal and acceptance indicator
couplings. For each $x,y \in \calX$, let $(\sxy, \sxy^c)$ be a Hahn decomposition for $P(x,\cd) -
P(y, \cd)$. Thus $\sxy \in \scrF$ and $P(x,A) \geq P(y,A)$ for any $A \in \scrF$ with $A \subset
\sxy$. Note that if $P(x,\cd)$ and $P(y,\cd)$ have Radon--Nikodym derivatives $p(x,\cd)$ and
$p(y,\cd)$ with respect to a common dominating measure, then we can use $S_{xy} := \{z : p(x, z)
\geq p(y,z) \}$ to form these sets.

\smallskip

\begin{theorem}
\label{thm:max}
Let $P$ be the MH-like transition kernel on $(\calX, \scrF)$ generated by a proposal kernel $Q$ and
an acceptance rate function $a$. Then $\bp \in \Gmax(P,P)$ if and only if $\bp$ is generated by
$\bq \in \Gamma(Q,Q)$ and an acceptance indicator coupling $\bb$ with the following properties: if
$\bxy \sim \bb(\xy,\xyp)$, then for all $x, y \in \calX$:
\begin{enumerate}
	\item $\P(b_x=1 \g x,y,x') = a(x,x')$ for $Q(x,\cdot)$-almost all $x'$
	\item $\P(b_y=1 \g x,y,y')= a(y,y')$ for $Q(y,\cdot)$-almost all $y'$
\end{enumerate}
and for $\bq(\xy,\cd)$-almost all $\xyp$,
\begin{enumerate}
	\setcounter{enumi}{2}
	\item $\P(b_x=b_y=1\g x,y,x',y') = 0$ if $x' \neq y'$ and either $x' \in \sxy^c$ or $y' \in \sxy$
	\item $\P(b_x=1, b_y=0\g x,y,x',y') = 0$ if $x' \neq y$ and either $x' \in \sxy^c$ or $y \in \sxy$
	\item $\P(b_x=0, b_y=1 \g x,y,x',y') = 0$ if $y' \neq x$ and either $x \in \sxy^c$ or $y' \in \sxy$
	\item $\P(b_x=0, b_y=0 \g x,y,x',y') = 0$ if $x \neq y$ and either $x \in \sxy^c$ or $y \in \sxy$.
\end{enumerate}
\end{theorem}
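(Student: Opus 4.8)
The plan is to build directly on Theorem~\ref{thm:repr}, which already identifies $\Gamma(P,P)$ with the couplings generated by some $\bq \in \Gamma(Q,Q)$ together with an acceptance indicator coupling $\bb$ satisfying conditions 1--2. Since every $\bp \in \Gmax(P,P)$ lies in $\Gamma(P,P)$, it suffices to prove the following: given $\bp$ generated by such a $\bq$ and $\bb$, we have $\bp \in \Gmax(P,P)$ if and only if $\bb$ \emph{also} satisfies conditions 3--6. Fix an arbitrary state pair $\xy$. By Lemma~\ref{lem:douc}, $\bp(\xy,\cd)$ is a maximal coupling of $P(x,\cd)$ and $P(y,\cd)$ exactly when $\bp(\xy,\cd) = \bp^r + \delta_\star\big(P(x,\cd) \wedge P(y,\cd)\big)$ for some $\bp^r \in \Gamma(P(x,\cd)^r, P(y,\cd)^r)$; since the upper variation $P(x,\cd)^r$ is supported on $\sxy$ and the lower variation $P(y,\cd)^r$ on $\sxy^c$ by our choice of Hahn set, this is equivalent (using the ``if'' direction of Corollary~\ref{cor:recog} with $S = \sxy$ for the converse) to the single requirement that
\[
\bp\big(\xy, (\sxy^c \times \calX)\sm\Delta\big) = \bp\big(\xy, (\calX \times \sxy) \sm \Delta\big) = 0.
\]

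Next I would translate this requirement through the generative recipe $(X,Y) := (b_x x' + (1-b_x)x,\, b_y y' + (1-b_y) y)$ with $\xyp \sim \bq(\xy,\cd)$ and $\bxy \sim \bb(\xy,\xyp)$. Writing $\bp(\xy,\cd)$ as the law of $(X,Y)$, the two vanishing conditions read $\P(X \in \sxy^c,\, X \neq Y) = 0$ and $\P(Y \in \sxy,\, X \neq Y) = 0$. I would then partition each probability according to the four acceptance scenarios $\bxy \in \{(1,1),(1,0),(0,1),(0,0)\}$. In scenario $(1,1)$ we have $(X,Y) = \xyp$, so it contributes $\P(x' \in \sxy^c,\, x'\neq y',\, b_x=b_y=1)$ to the first probability and $\P(y'\in\sxy,\, x'\neq y',\, b_x=b_y=1)$ to the second; in scenario $(1,0)$ we have $(X,Y) = (x',y)$, contributing $\P(x'\in\sxy^c,\, x'\neq y,\, b_x=1,\,b_y=0)$ and $\P(y\in\sxy,\, x'\neq y,\, b_x=1,\,b_y=0)$; and similarly for $(0,1)$ and $(0,0)$, where one or both destination coordinates are the fixed current state.

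Because all eight of these contributions are nonnegative and sum to the two probabilities above, $\bp(\xy,\cd)$ is maximal if and only if each of the eight vanishes. I would then observe that the two contributions arising from scenario $(1,1)$ both vanish if and only if $\P\big(\{x'\neq y'\} \cap (\{x'\in\sxy^c\} \cup \{y'\in\sxy\}),\ b_x=b_y=1\big) = 0$, and since this probability is the integral of $\P(b_x=b_y=1 \g x,y,x',y')$ over that set with respect to $\bq(\xy,\cd)$, the standard fact that a nonnegative integrand integrates to zero iff it vanishes almost everywhere shows this is precisely condition 3. The scenarios $(1,0)$, $(0,1)$, and $(0,0)$ match conditions 4, 5, and 6 in the same way; for the latter three the indicator in one (or both) coordinates reduces to a constant in $\xyp$ because the destination there is the current state, but the argument is otherwise unchanged. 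Ranging over all $\xy$ completes the equivalence.

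The routine parts are the scenario decomposition and the integral-to-almost-everywhere bookkeeping; the step needing the most care is matching each scenario's pair of contributions to the correct ``if'' clause of conditions 3--6 — keeping straight which of $x',y',x,y$ plays the destination role, and hence which of $\sxy^c$ or $\sxy$ the constraint involves — together with the justification that we may take $S=\sxy$ rather than an arbitrary Hahn set in the maximality characterization, which is exactly what forces the appeal to Lemma~\ref{lem:douc} and the fact that the upper and lower variations of $P(x,\cd)-P(y,\cd)$ are supported on $\sxy$ and $\sxy^c$ respectively.
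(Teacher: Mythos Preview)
Your proposal is correct and follows essentially the same approach as the paper's proof: both invoke Theorem~\ref{thm:repr} to obtain a generating pair $(\bq,\bb)$ satisfying conditions 1--2, translate maximality via Corollary~\ref{cor:recog} into the vanishing of $\bp\big(\xy,(\sxy^c\times\calX)\sm\Delta\big)$ and $\bp\big(\xy,(\calX\times\sxy)\sm\Delta\big)$, decompose these probabilities by the four acceptance scenarios, and match the resulting eight nonnegative terms to conditions 3--6 via the nonnegative-integrand-vanishes-a.e.\ argument. The only cosmetic difference is that the paper treats the two vanishing equations separately (obtaining first the ``$x'\in\sxy^c$'' halves of 3--6, then the ``$y'\in\sxy$'' halves), whereas you group terms by scenario and combine each pair into a single union event; your remark on why the fixed Hahn set $\sxy$ may be used is also slightly more explicit than the paper's direct appeal to Corollary~\ref{cor:recog}.
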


\begin{figure}[t]
\centering
\includegraphics[width=.58 \linewidth]{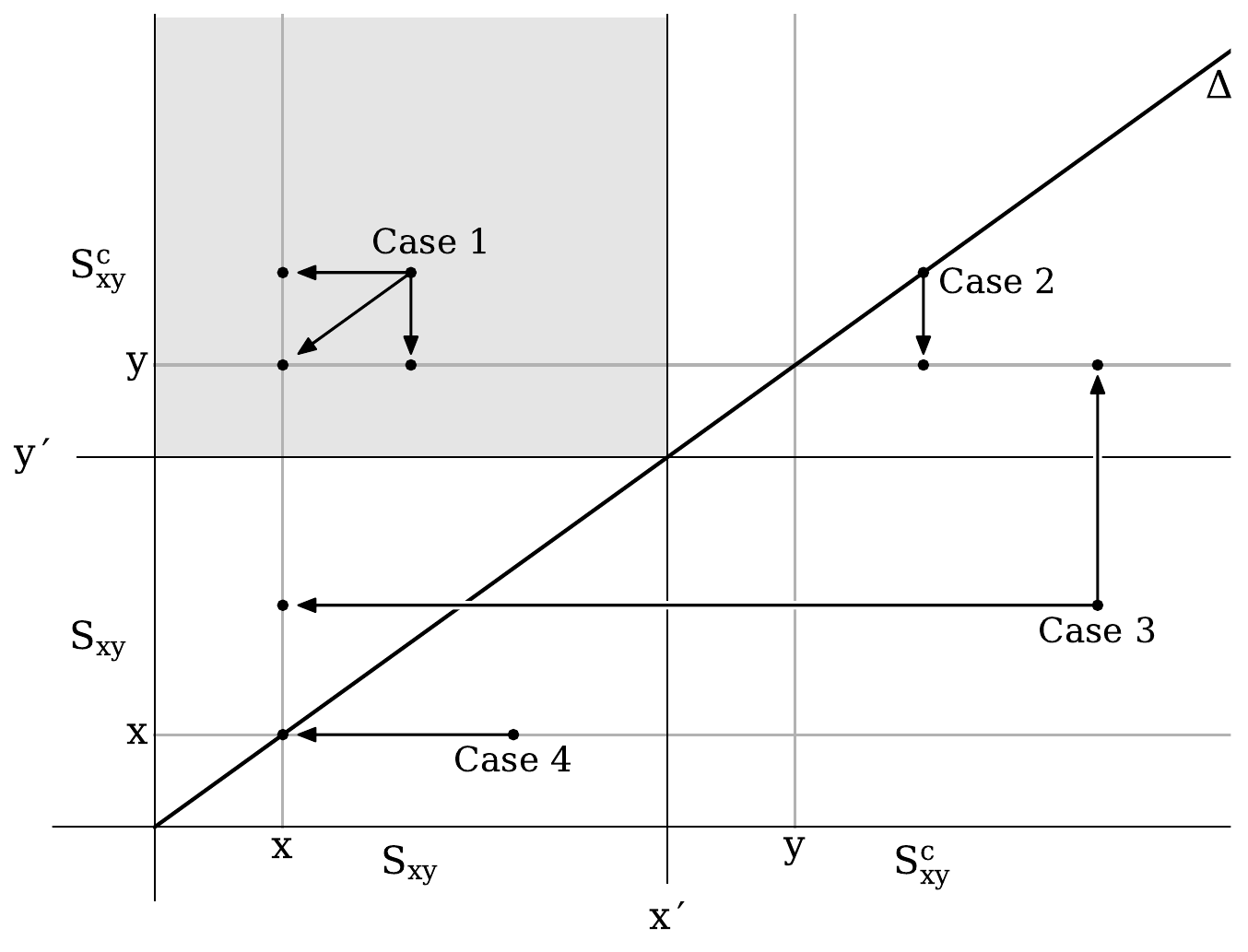}
\caption{
Diagram of acceptance scenarios considered in Theorem~\ref{thm:max}. The support of a maximal
coupling $\bp(\xy,\cd)$ is contained in the union of $\sxy \times \sxy^c$ (gray box) and $\Delta$
(the diagonal). Arrows illustrate the relationship of proposals $\xyp$ to transitions $(X,Y)$ under
different accept/reject combinations, with transitions outside the support of $\bp(\xy,\cd)$
forbidden almost surely. Case 1: the maximality of $\bp$ does not constrain the acceptance pattern
of proposals ${ \xyp \in \sxy \times \sxy^c} $. Case 2: proposals in $\Delta$ can be fully accepted
$(b_x=b_y=1)$ or fully rejected $(b_x=b_y=0)$, but $y'$ must be accepted if $x' \in S_{xy}^c$, and
$x'$ must be accepted if $y' \in S_{xy}$. Case 3: proposals in $(\sxy \times \sxy^c)^c \cap
\Delta^c$ must be fully rejected unless $y'=x$ or $x'=y$. Case 4: a proposal $\xyp$ outside the
support of $\bp(\xy, \cd)$ may be partially accepted $(b_x \neq b_y)$ if it yields a transition to
$(x,x)$ or $(y,y)$. \label{fig:plaid}
}
\end{figure}

Recall that by Corollary~\ref{cor:recog}, the maximality of a coupling $\bp \in \Gamma(P,P)$ is
equivalent to a condition on the support of each $\bp(\xy,\cd)$. Conditions 3-6 relate these support
constraints to the behavior of a proposal coupling $\bq$ and an acceptance indicator coupling $\bb$.
See Figure~\ref{fig:plaid} for an illustration of the acceptance scenarios considered in these
conditions and a visual intuition for why certain cases must be ruled out for $\bq$ and $\bb$ to
generate a maximal $\bp$.

\begin{proof}[Proof of Theorem~\ref{thm:max}]
Suppose $\bp \in \Gmax(P,P)$. By Theorem~\ref{thm:repr}, there exists a proposal coupling $\bq \in
\Gamma(Q,Q)$ and an acceptance indicator coupling $\bb$ such that for any $x, y \in \calX$ and $\bxy
\sim \bb(\xy,\xyp)$, we have $\P(b_x=1 \g x,y,x') = a(x,x')$ for $Q(x,\cdot)$-almost all $x'$, and
$\P(b_y=1 \g x,y,y')= a(y,y')$ for $Q(y,\cdot)$-almost all $y'$. Thus  Conditions 1 and 2 directly
follow from Theorem \ref{thm:repr}.

Since	$\bq$ and $\bb$ generate $\bp$, we have $\XY \sim \bp(\xy,\cd)$ where $X = b_x x' + (1-b_x) x$
and $Y=b_y y' + (1-b_y) y$. Since $\bp$ is maximal, Corollary~\ref{cor:recog} implies $0 = \bp\big(
\xy, ( S_{xy}^c \times \calX) \sm \Delta \big) = \P(X \in \sxy^c, X \neq Y \g x,y)$. Breaking this
up into the four possible acceptance scenarios $\bxy = (1,1)$, $(1,0)$, $(0,1)$, and $(0,0)$ yields
\eq{
	0 &= \E[ 1(x' \neq y', x' \in \sxy^c) \P(b_x=b_y=1 \g x,y,x',y') ] \hspace{23pt}
	0 = \E[ 1(x' \neq y, x'\in \sxy^c) \P(b_x=1,b_y=0 \g x,y,x',y') ] \\
	0 &= \E[ 1(y' \neq x, x \in \sxy^c) \P(b_x=0, b_y=1 \g x,y, x',y') ] \qquad
	0 = \E[ 1(x \in \sxy^c \sm \sy) \P(b_x=b_y=0 \g x,y,x',y') ].
}
In turn, these equations imply that for $\bq(\xy,\cd)$-almost all $\xyp$,
\eq{
	& \P(b_x=b_y=1 \g x,y,x',y') = 0 \text{ if } x' \neq y' \text{ and } x' \in \sxy^c \hspace{24pt}
	\P(b_x=1, b_y=0 \g x,y,x',y') = 0 \text{ if } x' \neq y \text{ and } x' \in \sxy^c \\
	& \P(b_x=0, b_y=1 \g x,y,x',y') = 0 \text{ if } y' \neq x \text{ and } x \in \sxy^c \qquad
	\P(b_x=b_y=0 \g x,y,x',y') = 0 \text{ if } x \neq y \text{ and } x \in \sxy^c.
}
This shows that the first either/or case of each of Conditions 3-6 are satisfied. Since $\bp$ is
maximal, Corollary~\ref{cor:recog} also implies $0 = \bp\big( \xy, (\calX \times S_{xy}) \sm \Delta
\big)$. Proceeding as above shows that the second either/or cases are also satisfied. So we conclude
that if $\bp \in \Gmax(P,P)$, then $\bp$ is generated by a $\bq$ and $\bb$ satisfying the six
conditions stated above.

For the converse, suppose that $\bq \in \Gamma(Q,Q)$ and a proposal coupling $\bb$ generate $\bp$
and satisfy the given hypotheses. Since Conditions 1 and 2 are equivalent to the conditions of
Theorem~\ref{thm:repr}, we have $\bp \in \Gamma(P,P)$. Now let $\xyp \sim \bq(\xy,\cd)$, $\bxy \sim
\bb(\xy,\xyp)$, $X = b_x x' + (1-b_x) x$, and $Y= b_y y' + (1-b_y) y$. $\bq$ and $\bb$ generate
$\bp$, so
\eq{
	& \bp(\xy, (S_{xy}^c \times \calX) \sm \Delta) = \P(X \in S_{xy}^c, X \neq Y \g x,y) \\
	& \  = \P(x' \in A, x' \neq y', b_x=1, b_y=1 \g x,y) + \P(x' \in A, x' \neq y, b_x=1, b_y=0 \g x,y) \\
	& \  + \P(y' \neq x, b_x=0, b_y=1 \g x,y) 1(x \in A) + \P(b_x=0, b_y=0 \g x,y) 1(x \in A) 1(x \neq y) = 0.
}
The last equality follows directly from Conditions 3-6 of the Theorem, with Condition 3 ensuring
that the first term equals zero, Condition 4 ensuring that the second term equals zero, and so on. A
similar argument yields ${\bp(\xy, (\calX \times S_{xy} ) \sm \Delta) = 0}$. By
Corollary~\ref{cor:recog}, $\bp$ is maximal if and only if there is a measurable set $S \in \scrF$
such that $\bp(\xy, (S^c \times \calX) \sm \Delta ) = \bp(\xy, (\calX \times S) \sm \Delta ) = 0$.
The argument above shows that $S_{xy}$ has these properties, so we conclude that $\bp \in
\Gmax(P,P)$.
\end{proof}

In Lemma~\ref{lem:maxtonon} and Appendix~\ref{ex:nonmax}, we observed that maximal proposal
couplings only weakly relate to maximal transition kernel couplings. Theorem~\ref{thm:max} shows
that any proposal coupling can give rise to a maximal coupling, as long as it satisfies the
conditions of the theorem for some acceptance indicator coupling $\bb$. In particular, such a $\bq$
must distribute probability over $\Delta$ so that $P(x, A) \wedge P(y, A) \leq \bq(\xy, A_\Delta)$
for all $A \in \scrF$.

As noted above, most of the MH couplings considered so far in the literature use a maximal proposal
coupling ${ \bq \in \Gmax(Q,Q) }$. However, the examples of~\citet{Jacob2020} and
\citet{o2021couplings} show that some of these perform better than others, and that the distribution
of meeting times often reflects the degree of contraction between chains when a meeting is not
proposed rather than the frequency with which meetings are proposed. These examples suggest that
non-maximal proposal couplings cannot be dismissed out of hand. The results above sharpen this
conclusion by observing that with the right acceptance indicator coupling, a non-maximal proposal
coupling can still produce transition kernel meetings at the optimal rate.


\section{Two-step representations of common MH algorithms}
\label{sec:appn}

In Theorem~\ref{thm:repr} and Corollary~\ref{cor:repr_non_atomic}, we showed that any coupling of
MH-like transition kernels can be expressed as a proposal coupling followed by an acceptance
indicator coupling. In Sections~\ref{sec:discrete} and \ref{sec:continuousmh}, we apply these
results to couplings of the MH algorithms on a discrete state space, the random walk Metropolis
algorithm, and the Metropolis-adjusted Langevin algorithm. In Section~\ref{sec:minorization} we
consider a two-step representation of the transition kernel coupling used in Nummelin splitting
\citep{rosenthal1995minorization, jones2001honest}. An overall impression from these examples is
that our results are simple to apply and can sometimes recast complex kernel couplings in
a more approachable, two-step form.

\subsection{The Metropolis--Hastings algorithm on a finite state space}
\label{sec:discrete}

We begin by considering a finite state space $\calX$ with an MH-like transition kernel $P$ based on
a proposal kernel $Q$ with ${Q(x, \sx) = 0}$ for all $x \in \calX$. Fix any transition kernel
coupling ${\bp \in \Gamma(P, P)}$. We write $r(x) = P(x,\sx)$ for the probability of a transition
from $x$ to itself, and in this subsection we also write $\bar \Theta(\xy, \xyp)$ in place of ${\bar
	\Theta(\xy, \{x'\} \times \{y' \})}$, where $\bar \Theta$ is any joint kernel in $\Gamma(Q,Q)$ or
$\Gamma(P,P)$.

Corollary~\ref{cor:repr_non_atomic} shows that $\bp$ can be generated by a particular proposal
coupling $\bq$ and acceptance indicator coupling $\bb$. Condition 1 of that result implies that
$\bq_\perp(\xy, \cdot) = \bp_\perp(\xy, \cdot) = 0$, since the base measure on $\calX \times \calX$
is counting measure. Condition 2 then shows that we can express $\bq(\xy, \xyp)$ as the sum of four
terms, $\Phi_{11}, \Phi_{10}, \Phi_{01}, \sas \Phi_{00}$. These all equal zero if $x' = x$ or $y' =
y$, since then $0 \leq \Phi_{ij}(\xy, \xyp) \leq \bq(\xy, \xyp) = 0$ by our `non-laziness' condition
on $Q$. For $x' \neq x$ and $y' \neq y$, we have
\ec{
	\Phi_{11}(\xy, \xyp) = \bp(\xy, \xyp) \\
	\Phi_{10}(\xy, \xyp) = \bp(\xy, (x', y)) \mu(y, y') 1(r(y) > 0) \\
	\Phi_{01}(\xy, \xyp)  =  \mu(x, x') 1(r(x) > 0) \bp(\xy, (x, y')) \\
	\Phi_{00}(\xy, \xyp) =  \mu(x, x') \mu(y, y') 1(r(x) > 0) 1(r(y) > 0) \bp(\xy, \xy).
}
If $x' \neq x$ and $r(x) > 0$, then $\mu(x, x') = (Q(x, \{x'\}) - P(x, \{x'\}))/r(x)$.
From here, it is easy to confirm that ${\bq \in \Gamma(Q,Q)}$.
Summing $\bq(\xy, \xyp)$ over $y'$ yields $Q(x, \{x'\})$, since
\eq{
	& \bp(\xy, \{x'\} \times \sy^c) + \bp(\xy, (x', y)) 1(r(y) > 0) \\
	& \quad + \mu(x, x') 1(r(x) > 0) r(x) + \mu(x, x') 1(r(x) > 0) 1(r(y) > 0) \bp(\xy, \xy) \\
	& = P(x, \{ x' \}) + (Q(x, \{x'\}) - P(x, \{x'\})) = Q(x, \{x'\}).
}
A similar argument shows that $\sum_{x'} \bq(\xy, \xyp) = Q(y, \{y'\})$.

We can also derive the necessary acceptance indicator coupling by using the $\Phi$ formulas above
with the proof method of Lemma~\ref{lem:rnderivs}. Let $\xy$ be the current state and let $\xyp$ be
a proposed state pair with $\bq(\xy, \xyp) >0$. Then the probability of accepting both proposals
will be $\Phi_{11}(\xy,\xyp) / \bq(\xy, \xyp)$, the probability of accepting $x'$ and rejecting $y'$
will be $\Phi_{10}(\xy,\xyp) / \bq(\xy, \xyp)$, and so on. These acceptance probabilities are
defined in terms of Radon--Nikodym derivatives, which are simple to compute in this case due to the
discrete state space.

The current example represents a significant specialization of the hypotheses of
Theorem~\ref{thm:repr} and Corollary~\ref{cor:repr_non_atomic}. Still, it provides a useful
intuition for how our methods work in more general situations. In this case, we see that
$\bq(\xy, \xyp)$ arises as a simple combination of the values of $\bp(\xy, \cdot)$ under the four
possible acceptance scenarios, using weights derived from $\mu$. We recall from
Lemma~\ref{lem:qualitative} that $\mu(x,x') = \P(x' = x \g b_x = 0, x)$ under suitable conditions.
Thus we see that the form of $\bq$ in our existence results is the one that assumes the least
possible dependence between $x'$ and $y'$ when one or both of these are rejected. This observation
holds in the general case, as well.

\subsection{The random walk Metropolis and Metropolis-adjusted Langevin algorithms}
\label{sec:continuousmh}

Next, we consider two popular MH kernels on the continuous state space $\calX = \R^n$. We write
$\lambda$ for Lebesgue measure on $\calX$, and we take $\pi$ to be a target distribution on $\calX$ with
density $\pi(\cdot)$ with respect to $\lambda$. We will also write $Q$ for any proposal kernel
on $\calX$ with density $q(x, \cdot)$, $a$ for the associated MH acceptance rate, $P$ for the
resulting transition kernel, and $\bp \in \Gamma(P,P)$ for some fixed kernel coupling based on $P$.

For the Random Walk Metropolis (RWM) algorithm, we use a proposal density $q$ such that $q(x, x') =
g(\lVert x - x' \rVert)$ for some ${g : \R^+ \to \R^+}$. We also consider the Metropolis-adjusted
Langevin algorithm (MALA)~\citep{roberts1996exponential}, which uses a discretized approximation to
the Langevin diffusion $\diff X_t = \frac 12 \nabla \log\pi(X_t) \diff t + \diff W_t$, where $W_t$
represents a Brownian motion. For MALA, we use the proposal distribution $Q(x,\cdot) = \N(x+\tau
\nabla \pi(x), 2\tau \mathbb I)$, where $\tau > 0$ serves as a tuning parameter. This distribution
has density $q(x,y) \propto \exp(-\frac{1}{4\tau}\lVert y - x - \tau \nabla \log\pi(x) \rVert^2)$.
MALA's use of gradient information makes it more computationally expensive, but it tends to
outperform RWM in high dimensions
\citep{roberts1998optimal}.

For both algorithms, Corollary \ref{cor:repr_non_atomic} implies the existence of a proposal
coupling $\bq \in \Gamma(Q,Q)$ that can be written as ${\bq = \bq_\perp + \bq_\ll}$, where $\bq_\perp$
is singular and $\bq_\ll$ is absolutely continuous with respect to the base measure $\lambda \times
\lambda$. Condition~1 of Corollary \ref{cor:repr_non_atomic} tells us that $\bq_\perp(\xy, \cdot) =
\bp_\perp(\xy, \cdot \cap (\sx^c \times \sy^c))$, the singular part of the transition kernel
coupling after removing the probability at the current state pair $(x,y)$. Condition 2 of that result tells
us that the continuous part $\bq_\ll(\xy, \cdot)$ of the proposal coupling will have density
\eq{
	\bar q(\xy, \xyp) = p(\xy, \xyp) + \bar p_y(x, x') m(y,y') + m(x,x') \bar p_x(y, y') + m(x,x') m(y,y') \bar r(x,y).
}
Here $\bar p_x, \bar p_y, m, \sas \bar r$ are all as defined in Section \ref{sec:cor}.

By Condition 3, proposals $\xyp$ in the support of $\bq_\perp(\xy, \cdot)$ are automatically
accepted. Otherwise, we can determine the probabilities of accepting one, both, or neither of the
proposals by dividing the relevant term of the density above by $\bar q$. Thus the probability of
accepting both $x'$ and $y'$ will be $p(\xy, \xyp)/\bar q(\xy,\xyp)$, the probability of accepting
$x'$ and rejecting $y'$ will be $\bar p_y(\xy x') m(y,y') / \bar q(\xy, \xyp)$, and so on.

We conclude that the differences between the two-step representation of couplings of these
algorithms are contained entirely within the functions that make up the expression for $\bar q$. A
similar conclusion also holds for more complicated algorithms such as Hamiltonian Monte
Carlo~\citep{Duane:1987, Neal1993, neal2011mcmc} and Metropolis-within-Gibbs~\citep{Metropolis1953,
	gelfand1990sampling}, for which the functions $\bar p$ and $m$ require a correspondingly greater
effort to derive.

\subsection{Coupling with a minorization condition} \label{sec:minorization}

Let $P$ be an MH-like transition kernel generated by a proposal kernel $Q$ and an acceptance rate
function $a$, and assume that $P$ has stationary distribution $\pi$. As above, we assume that $Q(x,
\cdot) \ll \lambda$ and $Q(x, \sx) = 0$ for all $x \in \calX$. The kernel $P$ satisfies a
minorization condition corresponding to a set $\calC \subseteq \calX$ if there exists an $\eps > 0$ and
a probability measure $\nu$ on $(\calX, \scrF)$ such that $P(x,A) \geq  \epsilon\, \nu(A)$ for all
$x \in \calC$ and all $A \in \scrF$. If $\calC = \calX$, then a simple argument shows that $\lVert
P^n(x,\cdot) - \pi(\cdot) \rVert_{\TV} \leq M (1 - \epsilon)^t$ for some $M>0$ and each positive
integer $t$~\citep{meyn2012markov}. If $\calC$ is a proper subset of $\calX$, further assumptions
are needed to control the distribution of return times of the chain to $\calC$ and so to obtain
convergence rate results.

Minorization arguments often use a coupling technique called Nummelin splitting
\citep{nummelin1978uniform, athreya1978new, jones2001honest, gelman2010handbook} to bound the
convergence rate of such a chain to its stationary distribution. Suppose that we run two identically
distributed Markov chains $(X_t)$ and $(Y_t)$, with one initialized arbitrarily and the
other initialized from its stationary distribution $\pi$. If both chains occupy $\calC$ at the same iteration
$t$, then we flip a coin with probability $\eps$ of heads. On heads, we draw
$X_{t+1} = Y_{t+1} \sim \nu$, and otherwise we draw $X_{t+1}$ and $Y_{t+1}$ independently from their
residual distributions so that $X_{n+1} \sim \nu^r_x := (P(X_n,\cdot) - \epsilon \nu(\cdot) / (
1-\epsilon)$ and $Y_{n+1}\sim \nu^r_y := (P(Y_n,\cdot) - \epsilon \nu(\cdot))/(1-\epsilon)$.
Once the chains meet, we update both chains using the same transitions from $P$, so that they
remain together at all subsequent iterations. Otherwise, we repeat this process whenever both chains
occupy $\calC$, until meeting occurs.

The procedure above yields a coupling $\bp \in \Gamma(P,P)$, defined without direct
reference to the proposal or acceptance distributions. Alternatively, we can use Theorem
\ref{thm:repr} to represent this coupling in terms of a proposal coupling $\bq$ and an acceptance
indicator coupling $\bb$. For every $x,y \in \calC$, the kernel coupling $\bp(\xy, \cdot)$ described
above is a mixture of a measure on the diagonal $\Delta = \{(x,x) : x \in \calX) \}$ and a measure
on its complement in $\calX \times \calX$. In particular, for any measurable set $S \in
\scrF \otimes \scrF$ we have $\bp(\xy, S) =\eps\, \nu(S \cap \Delta) + (1 - \eps) (\nu^r_x \times
\nu^r_y) (S \sm \Delta)$.

The key step in finding a two-step representation of $\bp$ is to identify an appropriate coupled
acceptance mechanism $\Phi = (\Phi_{11}, \Phi_{10}, \Phi_{01}, \Phi_{00})$.
Due to our assumptions on $Q$, we have
$\beta(x) = 0$ if $r(x)> 0$ and $1$ otherwise. We also have
\eq{
\mu(x,A_x) & = \begin{cases}
	\frac{Q(x,A_x) - P(x,A_x\setminus\{x\})}{r(x)} & \text{if } r(x) > 0 \\
	1(x \in A_x) & \text{otherwise}.
\end{cases}
}
A similar description holds for $\mu(y, A_y)$. From the proof of Lemma \ref{lem:camdpc}, we can
write the components of $\Phi$ as follows:
\ec{
\Phi_{11}(\xy, A_x\times A_y)= \bp \big( \xy, A_x\times A_y \cap (\sx^c \times \sy^c) \big)\\
\Phi_{10}(\xy, A_x\times A_y) = \bp( \xy, (A_x \cap \sx^c) \times \sy) \times \mu(y, A_y)\\
\Phi_{01}(\xy, A_x\times A_y) = \mu(x,A_x)\times\bp( \xy,   \sx \times (A_y \cap \sy^c)) \\
\Phi_{00}(\xy, A_x\times A_y) =  r(x,y) \mu(x,A_x)\times\mu(y,A_y).
}
Here $r(x,y) =\bar P((x,y),\{x\}\times\{y\})$ is the probability that the joint chain stays put at
$(x,y)$. In this example the two chains evolve independently independent when meeting does not
occur, and so we have $r(x,y) = r(x) r(y)$. With $\Phi$ defined as above, we take
${\bar Q =\Phi_{00} +\Phi_{01} + \Phi_{10}+\Phi_{11}}$ to be our proposal coupling.

Next we consider the acceptance indicator coupling $\bb$.
Lemma ~\ref{lem:rnderivs} implies the existence of acceptance indicators $\bxy \sim \bb(\xy,\xyp)$ such that
$\P (b_x = i, b_y = j \g x,y,x',y') = \phi_{ij}\big( \xy, \xyp \big)$
where ${\phi_{ij}(\xy,\cdot)} = {\diff \Phi_{ij}(\xy, \cdot)/\diff Q(\xy, \cdot)}$.
If we write each term of $\Phi_{ij}$ in its integral form, then $\phi_{ij}$ can also be
evaluated at every point  as a ratio of densities. For example, let $x' \neq x$, $y' \neq y$ be two
different points in $\calX$. Then the probability density of $\bq$ for a move from $\xy$ to $\xyp$
will be
\eq{
q(\xy, \xyp)
& = (p(x, x') -\epsilon \nu(x'))(p(y,y') - \epsilon \nu(y'))
+ (p(x,x') - \epsilon \nu(x')) (q(y,y') - p(y,y')) \\
& \quad + (p(x,x') -  q(x,x')) (q(y,y') - \epsilon \nu(y')) +
(q(x,x') -p(x,x'))  (q(y,y') - p(y,y')).
}
Here $p(x, x') = q(x, x') a(x,x')$ is the transition kernel density from $x$ to $x'$. Thus, when $x'
\neq x$, $y' \neq y$, and $x' \neq y'$, we have the following expressions for the distribution of
the acceptance indicator pair $\bxy$ conditional on the current state $\xy$ and the proposal state
$\xyp$:
\eq{
p_{11} &:= \P(b_x = 1, b_y = 1 \g x,y,x',y')
= \frac{ ( p(x,x') -\epsilon \nu(x'))(p(y,y') - \epsilon \nu(y')) }{ q(\xy, \xyp) }\\
p_{10} &:= \P(b_x = 1, b_y = 0 \g x,y,x',y')
= \frac{ ( p(x,x') - \epsilon \nu(x')) (q(y,y') - p(y,y')) }{ q(\xy, \xyp) }\\
p_{01} &:= \P(b_x = 0, b_y = 1 \g x,y,x',y')
= \frac{ ( p(x,x') -  q(x,x')) (q(y,y') - \epsilon \nu(y')) }{ q(\xy, \xyp)} \\
p_{00} &:= \P(b_x = 0, b_y = 0 \g x,y,x',y')
= \frac{ ( q(x,x') -p(x,x')) (q(y,y') - p(y,y'))  }{ q(\xy, \xyp)}.
}
The marginal probability of accepting the `$x$' move is $p_{10} + p_{11}$ and the probability of
accepting the  `$y$' move is $p_{01} + p_{11}$, consistent with Lemma \ref{lem:rndproperties}.  As a
sanity check,  the probability density of moving from $\xy$ to $\xyp$ according to our coupling and
acceptance mechanism is $q(\xy,\xyp)\times p_{11} = (p(x,x') -\epsilon \nu(x')) \cdot(p(y,y') -
\epsilon \nu(y'))$, which is indeed the probability density of the minorization coupling $\bar
P(\xy, \cdot)$ at point $\xyp$. The cases with $y' = x'$, $x' = x$, and $y' = y$ can be calculated
in the same way.


\section{Discussion}
\label{sec:discussion}

Couplings are a topic of great theoretical and practical interest. In theoretical work, couplings
provide a powerful tool for proving results on Markov chains and other stochastic processes. They
are also related to assignment problems under marginal constraints, a topic of interest since at
least~\citet{birkhoff1946three}. In applications, couplings provide a foundation for techniques
related to convergence diagnosis, variance reduction, and unbiased estimation, as described in
Section~\ref{sec:intro}.

In this paper we considered kernel and maximal kernel couplings of the MH algorithm and related
methods. In Theorem~\ref{thm:repr}, we showed that any transition kernel coupling can be expressed
in terms of a proposal coupling and an acceptance indicator coupling with certain properties.
Conversely, any joint kernel generated this way must be a legitimate transition kernel coupling. In
Theorem~\ref{thm:max} we took this a step further, showing that maximal kernel couplings correspond
to pairs of proposal and acceptance couplings subject to a short list of additional conditions.
These results provide a unified approach for describing kernel couplings of MH-like algorithms. In
principle, our results could be extended to other algorithms, such as multiple-try MH
\citep{liu2000multiple}, the guided walk of~\citet{gustafson1998guided}, and other irreversible MCMC
methods. However, it may be more difficult to derive simple two-step expressions like those
described in Sections~\ref{sec:cor} and \ref{sec:appn} for these other algorithms.

Throughout this study we have viewed the transition kernel $P$ on a state space $(\calX, \scrF)$ as
arising from a fixed proposal kernel $Q$ and acceptance rate function $a$. Alternatively, we could
have started with any kernel $P$ that is weakly dominated by any other kernel $Q$, in the sense that
$P(x, A) \leq Q(x, A)$ for all $x \in \calX$ and all $A \in \scrF$ with $x \not \in A$. This is
essentially the ordering condition of~\citet{peskun1973optimum}, as generalized to infinite state
spaces by~\citet{tierney1998note}. In light of the above, our Theorem~\ref{thm:repr} can be
interpreted as stating that given a kernel $P$ weakly dominated by a kernel $Q$, and given a
Radon--Nikodym derivative $a(x, \cdot) = \diff P(x, \cdot) / \diff Q(x, \cdot)$ away from the
current state $x$, then every kernel coupling ${\bp \in \Gamma(P, P)}$ can be generated by a
coupling $\bq \in \Gamma(Q, Q)$ followed by an acceptance indicator coupling $\bb$ based on $a$.
This interpretation turns our result into a joint probability existence and characterization theorem
that includes MH-like transition kernel couplings as a special case.

Looking forward, we believe that our results can be used to design new and more efficient couplings
for the MH algorithm. We also expect them to support theoretical work on kernel couplings for this
broad class of discrete-time Markov chains. It may be possible to derive meeting time bounds or
information on the spectrum of $\bp$ based on data about $\bq$ and $\bb$, using a version of  the
drift and minorization arguments of~\citet{rosenthal1995minorization, rosenthal2002quantitative} or
the techniques of~\citet{atchade2007geometric}. A deeper understanding of kernel couplings may also
support optimization arguments like those of~\citet{boyd2004fastest, boyd2006fastest,
	boyd2009fastest}. Finally, in analogy with our description of maximal couplings in
Theorem~\ref{thm:max}, it may be possible to characterize the optimal transport couplings in
$\Gamma(P,P)$ in terms of the properties of proposal and acceptance indicator couplings.

This study represents a step toward understanding the properties of $\Gamma(P,P)$ and $\Gmax(P,P)$
in terms of those of $\Gamma(Q,Q)$ and the associated acceptance indicator couplings. Markovian
couplings which are efficient in the sense of~\citet{aldous1983random} are well understood only in
special cases. Beyond that, few other bounds on the efficiency of Markovian couplings are
well-established~\citep{burdzy2000efficient}. There remain much to learn about such couplings for
MH-like chains. A clearer understanding of the set of Markovian couplings will make it possible to
identify better options for use in practice and to determine how efficient couplings can be for this
important class of algorithms.


\begin{appendix}

\section{Simple examples}
\label{sec:example}

In this appendix we present two simple examples to illustrate concepts from the main text. In the
first example, we explore the definitions and proof methods of Section~\ref{sec:kercoup} using an MH
kernel on a two-point state space. In the second example, we consider a three-point state space to
show that some maximal kernel couplings can only be generated from non-maximal proposal couplings,
as discussed in Section~\ref{sec:maximal}.

\subsection{Two-step representation of a transition kernel coupling}
\label{ex:running}

Let $\calX = \{1,2\}$, $\scrF =
2^\calX$, and assume a current state pair $\xy = (1,2)$. For a small finite state space like this, it is
convenient to represent a kernel $\Theta : \calX \times \scrF \to [0,1]$ by
$\Theta(z, \cd) = \big( \Theta(z,\{1\}), \Theta(z,\{2\}) \big)$
and to represent a coupling ${\bar \Theta: (\calXp) \times (\scrFp) \to [0,1]}$
using a table like the following:
\eq{
	\raisebox{3.5pt}{ $ \bar \Theta(\xy, \cd)  = $ } \hspace{-5pt}
	\begin{blockarray}{rccl}
		& {\stackrel{x}{\scriptstyle \downarrow}} & &  \\
		\begin{block}{ r |c|c| l }
			\cline{2-3}
			{\scriptstyle 1}  & \Theta(\xy, (1,1) ) & \Theta(\xy, (2,1)) & \bigstrut \\
			\cline{2-3}
			{\scriptstyle 2} & \Theta(\xy, (1,2) ) & \Theta(\xy, (2,2)) & {\hspace{-4pt} \scriptstyle \leftarrow y} \bigstrut \\
			\cline{2-3}
		\end{block}
		\  & {\scriptstyle 1} & {\scriptstyle 2}
	\end{blockarray}
}\\[-1.75em]
The columns of this table represent the destination values for the $x$ chain (e.g. the values of $x'$ or $X$)
while the rows represent the equivalent for the $y$ chain. The small `$x$' and `$y$' serve
as a reminder of the current state pair.

We assume a uniform proposal distribution and an MH acceptance rate function $a$ based on a target
distribution ${\pi = (\sfrac{1}{3}, \sfrac{2}{3})}$, so that $a(1, \cd) = (1,1)$ and $a(2,\cd) = (\h,
1)$. Together these imply a transition kernel $P$ with $P(1,\cd) = (\h,\h)$ and $P(2,\cd) =
(\sfrac{1}{4}, \sfrac{3}{4})$. Simple algebra shows that any proposal and transition coupling based
on the kernels above must take following forms, for some $\rho \in [0,\h]$ and $\lambda \in [0,
\sfrac{1}{4}]$:
\eq{
	\raisebox{3.5pt}{ $ \bq((1,2), \cd) = $ } \hspace{-5pt}
	\begin{blockarray}{rccl}
		& {\stackrel{x}{\scriptstyle \downarrow}} & &  \\
		\begin{block}{ r |c|c| l }
			\cline{2-3}
			{\scriptstyle 1}  & \rho & \h - \rho & \bigstrut \\
			\cline{2-3}
			{\scriptstyle 2} &\h-\rho & \rho & {\hspace{-4pt} \scriptstyle \leftarrow y} \bigstrut \\
			\cline{2-3}
		\end{block}
		\  & {\scriptstyle 1} & {\scriptstyle 2}
	\end{blockarray}
	\hspace{1em}
	\raisebox{3pt}{ $ \bp((1,2), \cd) = $ } \hspace{-5pt}
	\begin{blockarray}{rccl}
		& {\stackrel{x}{\scriptstyle \downarrow}} & &  \\
		\begin{block}{ r |c|c| l }
			\cline{2-3}
			{\scriptstyle 1}  & \lambda & \sfrac{1}{4} -\lambda & \bigstrut \\
			\cline{2-3}
			{\scriptstyle 2} &\h - \lambda & \lambda + \sfrac{1}{4} & {\hspace{-4pt} \scriptstyle \leftarrow y} \bigstrut \\
			\cline{2-3}
		\end{block}
		\  & {\scriptstyle 1} & {\scriptstyle 2}
	\end{blockarray}
}\\[-1em]
Definition~\ref{def:cam}, on coupled acceptance mechanisms, raises two questions: whether its
Condition 3 is independent of its Conditions 1 and 2, and whether a given $\bp \in \Gamma(P,P)$ can
be related to $\bq \in \Gamma(Q,Q)$ by more than one coupled acceptance mechanism. We answer both
questions in the affirmative.

Suppose that we have the following proposal and transition kernel
couplings, corresponding to the case of $\rho = \sfrac{1}{4}$ and $\lambda = 0$ above:
\eq{ \raisebox{3pt}{ $ \bq((1,2), \cd)
		= $ } \hspace{-5pt}
	\begin{blockarray}{rccl}
		& {\stackrel{x}{\scriptstyle \downarrow}} & &  \\
		\begin{block}{ r |c|c| l }
			\cline{2-3}
			{\scriptstyle 1}  & \sfrac{1}{4} & \sfrac{1}{4} & \bigstrut \\
			\cline{2-3}
			{\scriptstyle 2} &\sfrac{1}{4} & \sfrac{1}{4} & {\hspace{-4pt} \scriptstyle \leftarrow y}  \bigstrut  \\
			\cline{2-3}
		\end{block}
		\  & {\scriptstyle 1} & {\scriptstyle 2}
	\end{blockarray}
	\hspace{2.5em}
	\raisebox{3pt}{ $ \bp((1,2), \cd) = $ } \hspace{-5pt}
	\begin{blockarray}{rccl}
		& {\stackrel{x}{\scriptstyle \downarrow}} & &  \\
		\begin{block}{ r |c|c| l }
			\cline{2-3}
			{\scriptstyle 1}  & 0 & \sfrac{1}{4} &  \bigstrut  \\
			\cline{2-3}
			{\scriptstyle 2} &\h & \sfrac{1}{4} & {\hspace{-4pt} \scriptstyle \leftarrow y}  \bigstrut  \\
			\cline{2-3}
		\end{block}
		\  & {\scriptstyle 1} & {\scriptstyle 2}
	\end{blockarray}
}
By Lemma~\ref{lem:cases}, we observe that the first two conditions of Definition~\ref{def:cam}
require any coupled acceptance mechanism $\Phi$ relating $\bq$ and $\bp$ to take the following form
for real numbers $a, b, c, d, \sas e$:
\begin{alignat*}{2}
	& \raisebox{3pt}{ $ \Phi_{11}((1,2), \cd) = $ } \hspace{-5pt}
	\begin{blockarray}{rcc} \\
		\begin{block}{ r |c|c| }
			\cline{2-3}
			{\scriptstyle 1}  &  0 & \sfrac{1}{4} \bigstrut \\
			\cline{2-3}
			{\scriptstyle 2} & a & b \bigstrut  \\
			\cline{2-3}
		\end{block}
		\  & {\scriptstyle 1} & {\scriptstyle 2}
	\end{blockarray}
	\qquad
	&& \raisebox{3pt}{ $ \Phi_{10}((1,2), \cd) = $ } \hspace{-5pt}
	\begin{blockarray}{rcc} \\
		\begin{block}{ r |c|c| }
			\cline{2-3}
			{\scriptstyle 1}  & d & 0 \bigstrut  \\
			\cline{2-3}
			{\scriptstyle 2} & c & 1/4	- b \bigstrut  \\
			\cline{2-3}
		\end{block}
		\  & {\scriptstyle 1} & {\scriptstyle 2}
	\end{blockarray} \\
	& \raisebox{3pt}{ $ \Phi_{01}((1,2), \cd) = $ } \hspace{-5pt}
	\begin{blockarray}{rcc} \\
		\begin{block}{ r |c|c| }
			\cline{2-3}
			{\scriptstyle 1}  & 0 & 0 \bigstrut  \\
			\cline{2-3}
			{\scriptstyle 2} & e & 0 \bigstrut  \\
			\cline{2-3}
		\end{block}
		\  & {\scriptstyle 1} & {\scriptstyle 2}
	\end{blockarray}
	&& \raisebox{3pt}{ $ \Phi_{00}((1,2), \cd) = $ } \hspace{-5pt}
	\begin{blockarray}{rcc} \\
		\begin{block}{ r |c|c| }
			\cline{2-3}
			{\scriptstyle 1}  & \sfrac{1}{4}-d & 0 \bigstrut  \\
			\cline{2-3}
			{\scriptstyle 2} & \sfrac{1}{4}-a-c-e & 0 \bigstrut  \\
			\cline{2-3}
		\end{block}
		\  & {\scriptstyle 1} & {\scriptstyle 2}
	\end{blockarray}
	\\	\end{alignat*}\\[-3em]
Thus $(\Phi_{11} + \Phi_{10})(\xy, \sx \times \calX) = a + c + d$, and  $(\Phi_{11} +
\Phi_{01})(\xy, \calX \times \sy) = a + b + e$. Observe that ${a + c + d} = Q(x, \sx) = \h$ and $a +
b + e = Q(y, \sy) = \h$ do not automatically hold without imposing the third condition of
Definition~\ref{def:cam}. Therefore, this condition is not implied by the other two. Also note that
even with that condition, any choice of $a,b,c$ with $0 \leq c \leq b \leq \sfrac{1}{4}$ and $0 \leq
a \leq \sfrac{1}{4} - b \vee c$ will produce a valid $\Phi$ relating $\bq$ and $\bp$. Thus we see
that more than one coupled acceptance mechanism can relate the same proposal coupling $\bq$ and
transition kernel coupling $\bp$. Consequently, there can be more than one
choice of acceptance indicator coupling $\bb$ such that $\bq$ and $\bb$ generate $\bp$.

We continue with our two-state example to demonstrate the construction of a coupled acceptance
mechanism $\Phi$ and proposal coupling $\bq$ as in the proof of Lemma~\ref{lem:camdpc}. Under the
assumptions above, $\alpha_0$, $\alpha_1$, $\beta$, $\mu$ will take the following values:
\begin{alignat*}{3}
	\alpha_0(1,\cd) & = (0,0) \qquad
	\hspace{1ex} \alpha_1(1,\cd) && = (\h,\h) \qquad
	\mu(1,\cd) && = (1,0) \\
	\alpha_0(2,\cd) & = (\sfrac{1}{4},0) \qquad
	\alpha_1(2,\cd) && = (\sfrac{1}{4},\h) \qquad
	\mu(2,\cd) &&= (1,0) \\
	& \hspace{16ex} \beta(\cd) && = (1, \sfrac{2}{3}).
\end{alignat*}
We continue to use a transition kernel coupling $\bp$ with the following values:
\eq{
	\raisebox{3pt}{ $ \bp((1,2), \cd) = $ } \hspace{-5pt}
	\begin{blockarray}{rccl}
		& {\stackrel{x}{\scriptstyle \downarrow}} & &  \\
		\begin{block}{ r |c|c| l }
			\cline{2-3}
			{\scriptstyle 1}  & 0 & \sfrac{1}{4} & \bigstrut \\
			\cline{2-3}
			{\scriptstyle 2} &\h & \sfrac{1}{4} & {\hspace{-4pt} \scriptstyle \leftarrow y} \bigstrut  \\
			\cline{2-3}
		\end{block}
		\  & {\scriptstyle 1} & {\scriptstyle 2}
	\end{blockarray}
}\\[-1.5em]
Following the construction in the proof of Lemma \ref{lem:camdpc} and the values of $\alpha_0,
\alpha_1, \mu, \sas \beta$ shown above, we have
\eq{
	\raisebox{10pt}{ $ \Phi_{11}((1,2), \cd) = $ } \hspace{-5pt}
	\begin{blockarray}{rcc}
		\begin{block}{ r |c|c|}
			\cline{2-3}
			{\scriptstyle 1}  & 0 & \sfrac{1}{4}  \bigstrut  \\
			\cline{2-3}
			{\scriptstyle 2} &\sfrac{1}{3} & \sfrac{1}{6} \bigstrut  \\
			\cline{2-3}
		\end{block}
		\  & {\scriptstyle 1} & {\scriptstyle 2}
	\end{blockarray}
	\qquad
	\raisebox{10pt}{ $ \Phi_{10}((1,2), \cd) = $ } \hspace{-5pt}
	\begin{blockarray}{rcc}
		\begin{block}{ r |c|c| }
			\cline{2-3}
			{\scriptstyle 1}  & \sfrac {1}{6} & \sfrac{1}{12} \bigstrut  \\
			\cline{2-3}
			{\scriptstyle 2} &0& 0 \bigstrut  \\
			\cline{2-3}
		\end{block}
		\  & {\scriptstyle 1} & {\scriptstyle 2}
	\end{blockarray}
}\\[-1em]
Similar calculations show that both $\Phi_{01}((1,2), \cd)$ and $\Phi_{00}((1,2),\cd)$ consist entirely of zeros.
Finally, since ${\bq(\xy, \cd)} = {(\Phi_{11}+\Phi_{10}+\Phi_{01}+\Phi_{00})(\xy,\cd)}$, we have
\eq{
	\raisebox{3pt}{ $ \bq((1,2), \cd) = $ } \hspace{-5pt}
	\begin{blockarray}{rccl}
		& {\stackrel{x}{\scriptstyle \downarrow}} & &  \\
		\begin{block}{ r |c|c| l }
			\cline{2-3}
			{\scriptstyle 1}  & \sfrac{1}{6} & \sfrac{1}{3} & \bigstrut  \\
			\cline{2-3}
			{\scriptstyle 2} &\sfrac{1}{3}& \sfrac{1}{6}& {\hspace{-4pt} \scriptstyle \leftarrow y} \bigstrut  \\
			\cline{2-3}
		\end{block}
		\  & {\scriptstyle 1} & {\scriptstyle 2}
	\end{blockarray}
}\\[-1.5em]
We see that this proposal coupling has the marginal distributions $Q(1, \cd) = Q(2, \cd) = (\h,\h)$, as desired.

Finally, we derive an acceptance indicator coupling $\bb$ such that it and $\bq$ will generate the given $\bp$.
We proceed according to the construction in the proof of Lemma~\ref{lem:rnderivs}. There,
we defined the acceptance indicator coupling so that if ${\bxy \sim \bb(\xy, \xyp)}$, then for
$i,j \in \{0, 1\}$,
\eq{
	\P(b_x=i, b_y=j \g x, y, x', y') = \phi_{ij}(\xy, \xyp) = \frac{ \diff \Phi_{ij}(\xy, \cdot)}{\diff \bq(\xy, \cdot)}(x', y').
}
Due to the discrete state space, we can evaluate this Radon--Nikodym derivative by dividing the values
of $\Phi_{ij}(\xy, \xyp)$ by those of $\bq(\xy, \xyp)$. This yields the following joint acceptance
probabilities:
\begin{alignat*}{2}
	\raisebox{10pt}{ $\P( b_x=b_y=1 \g x,y, \cd) = $ } \hspace{-5pt}&
	\begin{blockarray}{rcc}
		\begin{block}{ r |c|c| }
			\cline{2-3}
			{\scriptstyle 1}  & 0 & \sfrac 3 4 \bigstrut \\
			\cline{2-3}
			{\scriptstyle 2} & 1 & 1 \bigstrut\\
			\cline{2-3}
		\end{block}
		\  & {\scriptstyle 1} & {\scriptstyle 2}
	\end{blockarray}
	\qquad
	\raisebox{10pt}{ $\P( b_x=1, b_y=0 \g x,y, \cd) = $ } \hspace{-5pt}&
	\begin{blockarray}{rcc}
		\begin{block}{ r |c|c| }
			\cline{2-3}
			{\scriptstyle 1}  & 1 & \sfrac 1 4\bigstrut \\
			\cline{2-3}
			{\scriptstyle 2} &  0 & 0 \bigstrut \\
			\cline{2-3}
		\end{block}
		\  & {\scriptstyle 1} & {\scriptstyle 2}
	\end{blockarray}
\end{alignat*}
\vspace{-1.5em}
\ec{
	\raisebox{11pt}{ $\P( b_x=0, b_y=1 \g x,y, \cd) = \P( b_x=b_y=0 \g x,y, \cd) = $ } \hspace{0pt}
	\hspace{-5pt}
	\begin{blockarray}{rcc}
		\begin{block}{ r |c|c| }
			\cline{2-3}
			{\scriptstyle 1}  & 0 & 0  \\
			\cline{2-3}
			{\scriptstyle 2} &0& 0 \\
			\cline{2-3}
		\end{block}
		\  & {\scriptstyle 1} & {\scriptstyle 2}
	\end{blockarray}
}
Thus we see that the methods of Section~\ref{sec:kercoup} provide a simple way to
derive a proposal coupling $\bq$ and an acceptance coupling $\bb$ which reproduce the given MH
transition kernel coupling $\bp$.

\subsection{Maximal kernel coupling generated by non-maximal proposal coupling}
\label{ex:nonmax}
We conclude with a slightly more complicated example to demonstrate the properties of maximal
couplings of the MH transition kernel. Assume $\calX = \{1,2,3\}, \scrF = 2^\calX, \sas \xy =
(1,2)$. From these states, we assume the following proposal and transition kernel distributions:
\eq{
	Q(x=1, \cd) = (0,\h,\h) \qquad & P(x=1, \cd) = (\h,\h,0) \\
	Q(y=2,\cd) = (\h,0,\h) \qquad & P(y=2, \cd) = (\h,0,\h).
}
It is easy to confirm that these distributions correspond to an MH transition kernel obtained when
$Q(3, \cd) = (0,1,0)$ and $\pi = (\sfrac{2}{5}, \sfrac{2}{5}, \sfrac{1}{5})$. Simple algebra shows
that any coupling in $\Gmax(Q,Q)$ and any coupling in $\Gmax(P,P)$ must have the following
properties:
\eq{
	\raisebox{3pt}{ $ \bq((1,2), \cd) = $ } \hspace{-5pt}
	\begin{blockarray}{rcccl}
		& {\stackrel{x}{\scriptstyle \downarrow}} & & &  \\
		\begin{block}{ r |c|c|c| l }
			\cline{2-4}
			{\scriptstyle 1} & 0 & \h & 0& \bigstrut \\
			\cline{2-4}
			{\scriptstyle 2} & 0 &0 & 0 & {\hspace{-4pt} \scriptstyle \leftarrow y} \bigstrut  \\
			\cline{2-4}
			{\scriptstyle 3} & 0 & 0 & \h & \bigstrut  \\
			\cline{2-4}
		\end{block}
		\  & {\scriptstyle 1} & {\scriptstyle 2} & {\scriptstyle 3}
	\end{blockarray}
	\hspace{2.5em}
	\raisebox{3pt}{ $ \bp((1,2), \cd) = $ } \hspace{-5pt}	\begin{blockarray}{rcccl}
		& {\stackrel{x}{\scriptstyle \downarrow}} & & &  \\
		\begin{block}{ r |c|c|c| l }
			\cline{2-4}
			{\scriptstyle 1} & \h & 0 & 0 & \bigstrut  \\
			\cline{2-4}
			{\scriptstyle 2} & 0 & 0 & 0 & {\hspace{-4pt} \scriptstyle \leftarrow y}\bigstrut   \\
			\cline{2-4}
			{\scriptstyle 3} & 0 & \h & 0 & \bigstrut  \\
			\cline{2-4}
		\end{block}
		\  & {\scriptstyle 1} & {\scriptstyle 2} & {\scriptstyle 3}
	\end{blockarray}
}
There exists no indicator coupling $\bb$ such that it and the unique maximal $\bq$ given
above generate $\bp$, since the combination of ${\xyp \sim \bq((1,2),\cd), (b_x,b_y) \sim \bb((1,2), \xyp)}$,
$(X, Y) = (b_x x' + (1-b_x) x, b_y y' + (1-b_y) y)$,
and ${(X,Y) \sim \bp((1,2), \cd)}$ yield a contradiction:
\eq{
	\h = \P( X=2,Y=3 \g x=1, y=2)
	\leq \P( x'=2,y'=3 \g x=1, y=2) = 0.
}
Note that in line with Theorem~\ref{thm:repr}, we can still generate $\bp((1,2),\cd)$ using the
following non-maximal proposal coupling and the coupled acceptance indicators $b_x, b_y$:
\eq{
	\raisebox{3pt}{ $ \tilde Q ((1,2), \cd) = $ } \hspace{-5pt}
	\begin{blockarray}{rcccl}
		& {\stackrel{x}{\scriptstyle \downarrow}} & & &  \\
		\begin{block}{ r |c|c|c| l }
			\cline{2-4}
			{\scriptstyle 1} & 0 & 0 & \h & \bigstrut  \\
			\cline{2-4}
			{\scriptstyle 2} & 0 & 0 & 0 & {\hspace{-4pt} \scriptstyle \leftarrow y} \bigstrut  \\
			\cline{2-4}
			{\scriptstyle 3} & 0 & \h & 0 & \bigstrut  \\
			\cline{2-4}
		\end{block}
		\  & {\scriptstyle 1} & {\scriptstyle 2} & {\scriptstyle 3}
	\end{blockarray}
}
\vspace{-2.5em}
\ec{
	\P(b_x =1, b_y = 1 \g (x,y) = (1,2), (x',y') = (2,3)) = 1\\
	\P(b_x = 0, b_y = 1 \g (x,y) = (1,2), (x',y') = (3,1)) = 1.
}
A qualitative conclusion from Lemma~\ref{lem:maxtonon} and the example above is that maximal
couplings $\bp \in \Gmax(P,P)$ may require a certain amount of proposal probability on the diagonal,
but the maximality of $\bq$ is neither necessary nor sufficient for $\bq$ to be able to generate
$\bp$.
\end{appendix}

\begin{acks}[Acknowledgments]
The authors thank Pierre E. Jacob, Persi Diaconis, and Qian Qin for helpful comments.
\end{acks}

\begin{funding}
The first author was supported in part by NSF Grant DMS-1844695. The second author was supported in
part by NSF Grant DMS-2210849.
\end{funding}

\bibliography{refs}{}

\end{document}